\title[]{Girsanov theory under a finite entropy condition}
\author{Christian L\'eonard}
\newtheorem{theorem}{Theorem}
\newtheorem{lemma}[theorem]{Lemma}
\newtheorem{proposition}[theorem]{Proposition}
\newtheorem{corollary}[theorem]{Corollary}
\newtheorem{definition}[theorem]{Definition}
\theoremstyle{remark}
\newtheorem{remarks}[theorem]{Remarks}
\numberwithin{theorem}{section}
\newcommand{\RR}{\mathbb{R}}
\newcommand{\1}{\mathbf{1}}
\newcommand\pf{_{\#}}
\newcommand\as{\textrm{-a.s.}}
\renewcommand\ae{\textrm{-a.e.}}
\newcommand\seq[2]{(#1_#2)_{#2\ge1}}
\newcommand\Lim[1]{\lim_{#1\rightarrow\infty}}
\newcommand{\cadlag}{c\`adl\`ag}
\newcommand\Rd{\RR^d}
\newcommand\ii{{[0,1]}}
\newcommand\OO{\Omega}
\newcommand\iO{{\ii\times\OO}}
\newcommand{\iRO}{\ii\times\Rd_*\times\OO}
\newcommand{\iR}{\ii\times\Rd_*}
\newcommand\IO{\int_\OO}
\newcommand\Ii{\int_\ii}
\newcommand{\IiR}{\int_{\iR}}
\newcommand\II[2]{\int_{[#1,#2]}}
\newcommand\XXX[2]{X_{[#1,#2]}}
\newcommand\XXx[2]{X_{[#1,#2)}}
\newcommand\xXX[2]{X_{(#1,#2]}}
\renewcommand\lg[2]{#1_{#2^-}}
\newcommand\dPR{\frac{dP}{dR}}
\newcommand\xtn{X^{\tau_n}}
\newcommand{\LL}{L}
\newcommand{\KK}{K}
\newcommand{\Lb}{\overline{\LL}}
\newcommand{\Kb}{\overline{\KK}}
\newcommand{\lLL}{{\ell \LL}}
\newcommand{\mut}{\widetilde{\mu}^\LL}
\newcommand{\mul}{\mu^X}
\newcommand{\muh}{\widetilde{\mu}^\lLL}
\newcommand{\mutk}{\widetilde{\mu}^\KK}
\newcommand{\MP}{\mathrm{MP}}
\newcommand{\LK}{\mathrm{LK}}
\newcommand{\lp}{{\lambda^+}}
\newcommand{\lm}{{\lambda^-}}
\begin{document}

% ************** page de garde ******************************

\address{Modal-X. Universit\'e Paris Ouest. B\^at.\! G, 200 av.
de la R\'epublique. 92001 Nanterre, France}
 \email{christian.leonard@u-paris10.fr}
\keywords{Stochastic processes, relative entropy, Girsanov's theory, diffusion processes, processes with jumps}
\subjclass[2000]{60G07, 60J60, 60J75, 60G44}

\begin{abstract}
This paper is about Girsanov's theory. It (almost) doesn't contain new results but it is based on a simplified new approach which takes advantage of the (weak) extra requirement that some relative entropy is finite. Under this assumption, we present and prove all the standard results pertaining to the absolute continuity of two continuous-time processes on $\Rd$ with or without jumps. We have tried to give as much as possible a self-contained presentation. 

The main advantage of the finite entropy strategy is that it allows us to replace martingale representation results by the simpler  Riesz representations of the dual of a Hilbert space (in the continuous case) or of an Orlicz function space (in the jump case). 
\end{abstract}

\maketitle
    \tableofcontents

% ************* corps du texte ****************************

\section{Introduction}

This paper is about Girsanov's theory. It (almost) doesn't contain  new results but it is based on a simplified new approach which takes advantage of the (weak) extra requirement that some relative entropy is finite. Under this assumption, we present and prove all the standard results pertaining to the absolute continuity of two continuous-time processes on $\Rd$ with or without jumps.

This article intends to look like lecture notes and we have tried to give as much as possible a self-contained presentation of Girsanov's theory. The author hopes that it could be useful for students and also to readers already acquainted with stochastic calculus.

The main advantage of the finite entropy strategy is that it allows us to replace martingale representation results by the simpler  Riesz representations of the dual of a Hilbert space (in the continuous case) or of an Orlicz function space (in the jump case). The gain is especially interesting in the jump case where martingale representation results are not easy, see \cite{Jac75}. Another feature of this simplified approach is that very few about exponential martingales is needed.

Girsanov's theory studies the relation between a
reference process $R$ and another process $P$ which is assumed to be
absolutely continuous with respect to $R.$ In particular, it is
known that if $R$ is the law of an $\Rd$-valued semimartingale,
then
$P$ is also the law of a semimartingale. In its wide meaning, this theory  also provides
us with a formula for the Radon-Nikod\'ym density $\dPR.$ 

In this article,
we assume that the probability measure $P$ has its relative entropy with respect to $R$:
\begin{equation*}
    H(P|R):=\left\{
    \begin{array}{ll}
E_P\log\left(\dPR\right)\in[0,\infty] & \textrm{if }P\ll R \\  
+\infty& \textrm{otherwise,} \\
    \end{array}
    \right.
\end{equation*}
 which is finite, i.e.\
\begin{equation}\label{eq-02}
    H(P|R)= E_R\left[\dPR\log\left(\dPR\right)\right]<\infty.
\end{equation}
In comparison, requiring $P\ll R$ only amounts to assume that
\begin{equation}\label{eq-10}
    E_R\left(\dPR\right)<\infty
\end{equation}
since $P$ has a finite mass. We are going to take advantage of
the only
difference between \eqref{eq-02} and \eqref{eq-10} which is the
stronger integrability property carried by the extra term
$\log\dPR.$

A key argument of this approach is the variational representation of the relative entropy which is stated at Proposition \ref{res-02}. Some versions of this result are well-known and widely used.  We decided to give a (usually unknown) complete picture of this very useful variational representation together with a complete elementary proof. We think that this complete picture is interesting in its own right.

A clear exposition of the general Girsanov's theorems, with no explicit expression of $\dPR$ in terms of the characteristics of the processes, is given in P. Protter's textbook \cite{Pro04}. The most complete results about Girsanov's theory for $\Rd$-valued processes, including explicit formulas for $\dPR,$ are available in  J. Jacod's textbook  \cite{Jac79}. An alternate presentation of this realm of results is also given in the later book by J. Jacod and A. Shiryaev \cite{JaShi87}. A good standard reference in the continuous case is D. Revuz and M. Yor's textbook \cite{RY99} about continuous martingales.

Next Section 2 is devoted to the statement of the main results. At Section 3, we state and prove the  above mentioned variational representation of the relative entropy.  At Sections 4 and 5, we present the proofs of  Theorems \ref{res-01} and \ref{res-04} which correspond to the continuous case. At Section 6, we give the proofs of  Theorems \ref{res-01b} and \ref{res-04b} which correspond to the jump case.

\section{Statement of the results}

We distinguish the cases where the sample paths are continuous
and
where they exhibit jumps.

\subsection*{Continuous processes in $\Rd$}

The paths which we consider are built on the time interval
$\ii$.
An $\Rd$-valued continuous  stochastic process is a random
variable taking its values in the set $$\OO=C(\ii,\Rd)$$ of all
continuous paths from $\ii$ to $\Rd.$ The canonical process
$(X_t)_{t\in\ii}$ is defined by
$$
    X_t(\omega)=\omega_t,\quad
    t\in\ii,\omega=(\omega_s)_{s\in\ii}\in\OO.
$$
In other words, $X=(X_t)_{t\in\ii}$ is the identity on $\OO$ and
$X_t:\OO\to\Rd$ is the
$t$-th projection. The set $\OO$ is
endowed
with the $\sigma$-field $\sigma(X_t;t\in\ii)$ which is generated
by the canonical
projections. We also consider the canonical
filtration $\Big(\sigma(\XXX0t);t\in\ii\Big)$ where for each
$t,$
$\XXX0t:=(X_s)_{s\in[0,t]}.$

Let us give ourself a reference probability measure $R$ on $\OO$
such that $X$ admits the $R$-semimartingale decomposition
\begin{equation}\label{eq-03}
    X=X_0+B^R+M^R,\quad R\as
\end{equation}
This means that $B^R$ is an adapted
process with bounded variation sample paths $R\as$ and $M^R$ is
a local $R$-martingale, i.e.\ there exists an increasing sequence of
stopping times $\seq\tau k$ which converges to infinity $R\as$
and
such that for each $k\ge1,$ the stopped process $t\mapsto
M^R_{t\wedge\tau_k}$ is a uniformly integrable $R$-martingale.

As a typical example, one may think of the solution to the SDE
(if it exists)
\begin{equation}\label{eq-47}
X_t=X_0+\II0t b_s(\XXX0s)\,ds+\II0t\sigma_s(\XXX0s)\, dW_s,
\quad 0\le t\le1
\end{equation}
where $W$ is a  Wiener process on $\Rd$ and $b:\iO\to\Rd$ and
$\sigma:\iO\to\mathbf{M}_{d\times d}$ are locally bounded. In
this
situation, a natural localizing sequence $\seq\tau k$ is the
sequence of exit times from the Euclidean balls of radius $k,$
$B^R_t=\int_0^tb_s(\XXX0s)\,ds$ has absolutely continuous sample
paths $R\as$ and
$M^R_t=\int_0^t\sigma_s(\XXX0s)\, dW_s$ has the quadratic
variation
\begin{equation}\label{eq-09}
    [M^R,M^R]_t=\int_0^t a_s\,ds\quad R\as
\end{equation}
where $a_t:=\sigma_t\sigma_t^*(\XXX0t)$ takes its values in the
set $\mathbf{S}_+$ of all positive semi-definite  $d\times d$
matrices.

More generally, we assume that the quadratic variation of $M^R$
is
a process which is $R\as$ equal to a random element of the set
$\mathcal{M}^{\mathrm{na}}_{\mathbf{S}_+}([0,1])$ of all bounded
measures on $\ii$ with
no atoms and taking their values in
$\mathbf{S}_+$:
\begin{equation}\label{eq-01}
[M^R,M^R](dt)=A(dt)\in
\mathcal{M}^{\mathrm{na}}_{\mathbf{S}_+}([0,1]),\quad R\as
\end{equation}
and also that
    $$
t\in[0,1]\mapsto [M^R,M^R]_t:=A([0,t])=A(t,\XXX0t;[0,t])\in
\mathbf{S}_+,\quad R\as
    $$
is an adapted process. The quadratic variation given at
\eqref{eq-01} might have an atomless singular part in addition
to
its absolutely continuous component $a_t\,dt.$ This notation is
concise: $A(dt)$ is
random and for any $\Rd$-valued processes
$\alpha, \beta,$  $\alpha_t\cdot A(dt)\beta_t$ is the
infinitesimal element of a measure on $\ii.$ In particular,
$t\mapsto \II0t A(ds)\beta_s\in\Rd,$  $t\mapsto \II0t
\alpha_s\cdot A(ds)\beta_s\in\RR$ and the process $t\mapsto \II0t
\beta_s\cdot
A(ds)\beta_s\in\RR$ is increasing.
\\
Summing up,  $R$ is a solution to the  \emph{martingale problem} $\MP(B^R,A)$. This means that the canonical process $X$ is the sum 
\eqref{eq-03} of a bounded variation adapted process $B^R$ and a
local $R$-martingale $M^R$ whose quadratic variation is specified by $A$ and
\eqref{eq-01}. We write
$$
R\in\MP(B^R,A)
$$
for short.

\begin{theorem}[Girsanov's theorem]\label{res-01}
Let $R$ and $P$ be as above, satisfying in particular the finite entropy condition \eqref{eq-02}. Then, $P$ is the law of a
semimartingale. More precisely, there exists an $\Rd$-valued
adapted process $\beta$ satisfying 
\begin{equation}\label{eq-32}
 E_P\II01\beta_t\cdot A(dt)\beta_t<\infty
\end{equation} 
and such that, defining
\begin{equation}\label{eq-18}
    \widehat{B}_t:=\II0t A(ds)\beta_s,\quad 0\le t\le 1,
\end{equation}
one obtains
\begin{equation*}
    X =X_0+B^R+\widehat{B}+M^P,\quad P\as
\end{equation*}
where $M^P$ is a local $P$-martingale such that
$[M^P,M^P]=[M^R,M^R],$ $P\as$ 
\\
In other words, $P\in\MP(B^R+\widehat{B},A).$
\end{theorem}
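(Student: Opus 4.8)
The plan is to exploit the finite entropy condition \eqref{eq-02} through the variational representation of relative entropy (Proposition \ref{res-02}), using it to control stochastic integrals against $M^R$ uniformly in $P$-expectation, and then to produce the drift correction $\beta$ by a Riesz representation argument in an $L^2$ space built from the quadratic variation measure $A$. Concretely, I would first fix a localizing sequence $\seq\tau k$ for $M^R$ and, for each bounded adapted $\Rd$-valued process $u$, consider the local $R$-martingale $N^u_t = \II0t u_s\cdot dM^R_s$, which stopped at $\tau_k$ is a genuine $R$-martingale with $[N^u,N^u]^{\tau_k}_t = \II{0}{t\wedge\tau_k} u_s\cdot A(ds)u_s$. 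The goal of this first step is to show that the linear functional $u \mapsto E_P[N^u_1]$ (suitably interpreted via stopping and a limiting argument in $k$) is well-defined and continuous with respect to the seminorm $\|u\|_P^2 := E_P\II01 u_t\cdot A(dt)u_t$.

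The continuity estimate is where the finite entropy hypothesis does the essential work. Using the variational formula $H(P|R) = \sup_{\varphi} \big( E_P\varphi - \log E_R e^\varphi \big)$ applied to $\varphi = \theta N^{u,\tau_k}_1 - \tfrac{\theta^2}{2}[N^u,N^u]^{\tau_k}_1$ for a real parameter $\theta$, together with the fact that $\exp(\theta N^{u,\tau_k}_1 - \tfrac{\theta^2}{2}[N^u,N^u]^{\tau_k}_1)$ has $R$-expectation at most $1$ (this is the only place exponential-martingale input is needed, and only an inequality, not an identity, is required), one gets
\begin{equation*}
\theta\, E_P N^{u,\tau_k}_1 \le H(P|R) + \frac{\theta^2}{2}\, E_P [N^u,N^u]^{\tau_k}_1.
\end{equation*}
Optimizing over $\theta$ yields $|E_P N^{u,\tau_k}_1| \le \sqrt{2 H(P|R)\, E_P[N^u,N^u]^{\tau_k}_1}$, hence $|E_P N^{u,\tau_k}_1| \le \sqrt{2 H(P|R)}\,\|u\|_P$. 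Letting $k\to\infty$ (monotone/dominated convergence on the right, and a uniform-integrability argument for the left, using that $N^{u,\tau_k}_1$ converges $P$-a.s. and is bounded in $L^2(P)$-type norm via the same inequality) gives a bounded linear functional $\ell(u) = E_P N^u_1$ on the space of bounded adapted $u$, with $|\ell(u)| \le \sqrt{2H(P|R)}\,\|u\|_P$.

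Next I would pass to the Hilbert space $\mathcal H$ obtained by completing (the quotient of) bounded adapted processes under $\|\cdot\|_P$ — this is an $L^2$ space of predictable processes with respect to the random measure $dP\otimes A(dt)$. The functional $\ell$ extends to $\mathcal H$, and by Riesz representation there is $\beta\in\mathcal H$ with $\ell(u) = E_P\II01 u_t\cdot A(dt)\beta_t$ for all $u$; membership $\beta\in\mathcal H$ is exactly \eqref{eq-32}. Defining $\widehat B$ by \eqref{eq-18} and $M^P := X - X_0 - B^R - \widehat B$, it remains to check that $M^P$ is a local $P$-martingale with $[M^P,M^P]=[M^R,M^R]$. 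For the martingale property I would test against increments: for $s<t$ and a bounded $\sigma(\XXX0s)$-measurable $Y$, apply the identity $\ell(u)=E_P\II01 u\cdot A\,\beta$ to processes of the form $u_r = Y\1_{(s,t]}(r) e_i$ (after localization and with the subtlety that $u$ must be built from increments of $M^R$, so one actually works with $u$ contracted against $dM^R$), and match terms to get $E_P[Y(M^P_t - M^P_s)]=0$; a monotone-class argument then gives the local martingale property on each stochastic interval $\llbracket 0,\tau_k\rrbracket$. The bracket identity follows because $\widehat B$ has bounded variation paths ($A$ has no atoms, $\beta\in\mathcal H$), so it does not contribute to the quadratic variation, whence $[M^P,M^P] = [X - X_0 - B^R,\,X - X_0 - B^R] = [M^R,M^R]$ computed pathwise.

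The main obstacle I anticipate is the careful handling of localization when transferring from $R$ to $P$: the estimates are clean for the stopped martingales $N^{u,\tau_k}$, but one must argue that $\seq\tau k$ still reduces the relevant processes under $P$ and that the limits $k\to\infty$ commute with $E_P$. Since $P\ll R$, the stopping times converge to infinity $P$-a.s. as well, and the uniform $L^2$-type bound from the entropy inequality supplies the integrability needed to pass to the limit; making this rigorous, together with the identification of $\mathcal H$ as a genuine space of predictable processes so that $\beta$ is a bona fide adapted process and $\widehat B$ is well-defined, is the technical heart of the argument. The verification that $M^P$ is a semimartingale component (local martingale) rather than merely an $L^2$ object, and that it has continuous paths inherited from $X$, $B^R$ and $\widehat B$, is then routine.
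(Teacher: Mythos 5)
Your proposal is correct and takes essentially the same route as the paper: exponential supermartingale bound $E_R\,\mathcal{E}(N^u)_1\le 1$, fed into the variational representation of entropy to get the estimate $|E_P N^u_1|\lesssim \|u\|_P$, then Riesz representation in the $L^2(dP\otimes A)$ Hilbert space of predictable processes to produce $\beta$, and finally identification of $M^P=X-X_0-B^R-\widehat B$ as a local $P$-martingale by testing against simple integrands. The only stylistic difference is that the paper applies the supermartingale inequality directly to $h\cdot M^R$ for $h\in\mathcal H^R$ (so no limit in $k$ is needed) and normalizes by $\lambda=\|h\|_{\mathcal G(P)}$ rather than optimizing over your parameter $\theta$, yielding the constant $H(P|R)+\tfrac12$ in place of your sharper $\sqrt{2H(P|R)}$; either constant suffices for the continuity argument.
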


\begin{remarks}\
\begin{enumerate}[(a)]
\item
The process $\beta$ only needs to be defined $P\as$ (and not $R\as$) for the statement of Theorem \ref{res-01} to be meaningful. In fact, its proof only gives the ``construction'' of a process $\beta$,  $P$-almost everywhere.
\item The process $\widehat{B}$ is well defined. Indeed, by
Cauchy-Schwarz inequality, for any  $\Rd$-valued process $\xi,$
\begin{equation*}
     \Ii |\xi_t\cdot A(dt)\beta_t|
    \le  \left( \Ii \beta_t\cdot A(dt)\beta_t\right)^{1/2}
\left( \Ii \xi_t\cdot A(dt)\xi_t\right)^{1/2}\in[0,\infty],\quad P\as
\end{equation*}
Looking at $A(\omega)$ with $\omega$ fixed as a matrix of
measures, we see that $\sup\{ \Ii \xi_t\cdot A(dt)\xi_t; \xi:
|\xi_t|=1,\forall t\}$ is bounded above by the sum of the total
variations of the entries of $A$. Consequently, this supremum is
finite $P\as$ On the
other hand, as $E_P\Ii\beta_t\cdot
A(dt)\beta_t<\infty,$ we see that a fortiori $\Ii\beta_t\cdot
A(dt)\beta_t<\infty,$ $P\as$ It follows that $ \Ii
|A(dt)\beta_t|<\infty,$ $P\as$ which means that $\widehat{B}$ is
well defined.

\item When the quadratic variation is given by \eqref{eq-09},
one retrieves the standard representation
$$
    \widehat{B}_t=\II0t a_s\beta_s\,ds.
$$
It is then known that under the minimal assumption
\eqref{eq-10},
Theorem \ref{res-01} still holds true with
$$
    \Ii\beta_t\cdot a_t\beta_t\,dt<\infty,\quad P\as
$$
instead of $E_P\Ii\beta_t\cdot a_t\beta_t\,dt<\infty$ under the
assumption \eqref{eq-02}, see for instance \cite[Chp.
III]{JaShi87}.
\end{enumerate}
\end{remarks}

For any probability $Q$ on $\OO,$ let us denote $Q_0={X_0}_\#Q$
the law of the initial position $X_0$ under $Q.$

\par\medskip\noindent
\textbf{Definition} (Condition (U)). One says that $R\in \MP(B^R,A)$ satisfies
the \emph{uniqueness condition} (U) if for any probability
measure $R'$ on $\Omega$ such that the initial laws $R'_0=R_0$
are equal, $R'\ll R$ and 
$R'\in \MP(B^R,A)$, we have $R=R'.$

It is known \cite{Jac75} that if the  SDE \eqref{eq-47} admits a unique solution, for instance if the coefficients $b$ and $\sigma$ are locally Lipschitz, then its law $R$ satisfies (U).

\begin{theorem}[The density $dP/dR$]\label{res-04}
Let $R$ and $P$ be as above, satisfying in particular the finite entropy condition \eqref{eq-02}. Keeping the notation of Theorem
\ref{res-01}, we have
$$
H(P_0|R_0)+ \frac12 E_P \II0{1}\beta_t\cdot A(dt)\beta_t\le
H(P|R).
$$
If in addition it is assumed that $R$ satisfies the uniqueness
condition \emph{(U)}, then
$$
H(P_0|R_0)+  \frac12 E_P \II0{1}\beta_t\cdot A(dt)\beta_t=H(P|R)$$
and
\begin{eqnarray*}
 \frac{dP}{dR}
&=& \1_{\{\dPR>0\}} \frac{dP_0}{dR_0}(X_0)
\exp\left(\II0{1}\beta_t\cdot dM^R_t-\frac12\II0{1}\beta_t\cdot
A(dt)\beta_t
    \right)\\
&=& \1_{\{\dPR>0\}} \frac{dP_0}{dR_0}(X_0)
\exp\left(\II0{1}\beta_t\cdot
(dX_t-dB^R_t)-\frac12\II0{1}\beta_t\cdot A(dt)\beta_t
    \right).
\end{eqnarray*}
\end{theorem}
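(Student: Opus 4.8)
The plan is to combine the additive decomposition of the relative entropy along the canonical filtration with the semimartingale decomposition furnished by Theorem \ref{res-01}, using the variational representation of Proposition \ref{res-02} for the inequality and a backward Girsanov transformation together with the uniqueness condition (U) for the equality and the explicit formula. First I would introduce the density martingale $D_t:=E_R[\dPR\mid\sigma(\XXX0t)]$, a nonnegative \cadlag\ $R$-martingale with $D_0=\frac{dP_0}{dR_0}(X_0)$, $R\as$, and $D_1=\dPR$. The finite entropy hypothesis \eqref{eq-02} gives $D_1|\log D_1|\in L^1(R)$, i.e.\ $|\log\dPR|\in L^1(P)$, and, with $H(P_0|R_0)\le H(P|R)<\infty$ (data processing along $X_0$), also $|\log D_0|\in L^1(P)$; hence $H(P|R)-H(P_0|R_0)=E_P[\log D_1-\log D_0]$. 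Since $P\ll R$ one has $P\{D_0>0\}=1$, and the martingale property forces $P\{D_t>0,\ \forall t\}=1$, so $\log D$ is $P\as$ well defined on $\ii$; moreover, as $[M^R,M^R]=A$ has no atoms, $M^R$ and, by Theorem \ref{res-01}, $M^P$ have continuous paths, so $\int_0^{\cdot}\beta_t\cdot dM^P_t$ is a well-defined continuous local $P$-martingale by \eqref{eq-32}.

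For the inequality, apply Proposition \ref{res-02} to $u:=\log\frac{dP_0}{dR_0}(X_0)+\II0{1}\beta_t\cdot dM^R_t-\frac12\II0{1}\beta_t\cdot A(dt)\beta_t$, where $\int_0^{\cdot}\beta_t\cdot dM^R_t:=\int_0^{\cdot}\beta_t\cdot dM^P_t+\widehat B$ is read off Theorem \ref{res-01}. The process $\mathcal{E}\big(\int_0^{\cdot}\beta_t\cdot dM^R_t\big)$ is a nonnegative local $R$-martingale, hence an $R$-supermartingale, so $E_R e^{u}\le E_R\frac{dP_0}{dR_0}(X_0)=1$ and $H(P|R)\ge E_P u$. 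Since $dM^R_t=dM^P_t+A(dt)\beta_t$ under $P$, localizing $\int_0^{\cdot}\beta_t\cdot dM^P_t$ (and the other local martingales) by stopping times $\seq\tau k$ that bound it, so the stopped processes are true martingales, and letting $k\to\infty$ (monotone convergence for $\int_0^{\cdot}\beta\cdot A\beta$, uniform integrability from $|\log\dPR|\in L^1(P)$ for the rest) gives $E_P\II0{1}\beta_t\cdot dM^R_t=E_P\II0{1}\beta_t\cdot A(dt)\beta_t$; inserting this in $H(P|R)\ge E_P u$ yields the claimed inequality. An equivalent route is to write $\log D=\log D_0+N-\frac12\langle N\rangle$ with $D=D_0\,\mathcal{E}(N)$, identify $\langle M^R,N\rangle=\widehat B$ by Girsanov, note that the $M^R$-parallel part of $N$ carries quadratic variation $\int\beta\cdot A\beta\le\langle N\rangle_1$, observe that $\log D$ is then a $P$-submartingale, and conclude as above.

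For the equality and the formula, under (U), run Girsanov backwards. Put $\mathcal{L}:=\mathcal{E}\big(-\int_0^{\cdot}\beta_t\cdot dM^P_t\big)$, a nonnegative local $P$-martingale; once it is known to be a genuine $P$-martingale, the probability $R'$ obtained from $P$ via the density $\mathcal{L}_1$ and a suitable adjustment of the initial law satisfies $R'\ll R$, $R'_0=R_0$, and, by Girsanov applied to $M^P$ (which acquires the drift $\langle M^P,-\int\beta\cdot dM^P\rangle=-\widehat B$), $X=X_0+B^R+\widehat B+M^P=X_0+B^R+M^{R'}$ with $[M^{R'},M^{R'}]=[M^P,M^P]=A$; thus $R'\in\MP(B^R,A)$, and (U) forces $R'=R$. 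Inverting the density and using $\mathcal{E}\big(-\int_0^{\cdot}\beta\cdot dM^P\big)^{-1}=\mathcal{E}\big(\int_0^{\cdot}\beta\cdot dM^R\big)$ — which follows from $\int_0^{\cdot}\beta\cdot dM^P=\int_0^{\cdot}\beta\cdot dM^R-\int_0^{\cdot}\beta\cdot A\beta$ and $\mathcal{E}(L)^{-1}=\mathcal{E}(-L+\langle L\rangle)$ — one reads off, on $\{\dPR>0\}$, the two displayed expressions for $\dPR$ (the second via $dM^R_t=dX_t-dB^R_t$). Taking $\log$ and then $E_P$ (every term is in $L^1(P)$ by $H(P_0|R_0)<\infty$, \eqref{eq-32} and $|\log\dPR|\in L^1(P)$), together with $E_P\II0{1}\beta_t\cdot dM^P_t=0$ from the localization step, yields the entropy identity $H(P_0|R_0)+\frac12 E_P\II0{1}\beta_t\cdot A(dt)\beta_t=H(P|R)$.

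The step I expect to be the main obstacle is legitimizing the exponential (super)martingales $\mathcal{E}\big(\int_0^{\cdot}\beta\cdot dM^R\big)$ and $\mathcal{L}$ as true martingales, and thereby justifying both the passage to the limit in the localization and the construction of $R'$: in general $\mathcal{E}\big(\int_0^{\cdot}\beta\cdot dM^R\big)$ can lose $R$-mass — precisely when $R_0\ll P_0$ fails, which is why the indicator $\1_{\{\dPR>0\}}$ appears and why the initial law of $R'$ must be patched on the $P$-null set $\{dP_0/dR_0=0\}$. This is exactly where the finite entropy hypothesis \eqref{eq-02} is indispensable: the resulting bound $|\log\dPR|\in L^1(P)$ supplies the uniform integrability of $\{\log D_{\tau_k}\}$ and $\{\log\mathcal{L}_{\tau_k}\}$ needed to take the limit and to validate the equality case of Proposition \ref{res-02}. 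Lesser points to dispatch are the a priori path-continuity of $D$ under $P$ (only the component of its martingale part correlated with $M^R$ enters, so a purely discontinuous component would merely strengthen the inequality and is anyway excluded by (U)) and the transfer of uniqueness from $\MP(B^R,A)$ to $\MP(B^R+\widehat B,A)$, for which one uses that $R$ itself solves $\MP(B^R,A)$ and that uniqueness of the martingale problem is independent of the absolutely continuous initial law.
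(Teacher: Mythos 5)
Your overall architecture matches the paper's: use the variational representation (Proposition~\ref{res-02}) together with the supermartingale bound $E_R\,\mathcal{E}(\int\beta\cdot dM^R)\le1$ for the inequality, and a backward Girsanov transformation plus the uniqueness condition~(U) for the equality and the explicit density. The paper's Lemmas~\ref{res-05}--\ref{res-07} implement exactly the back-and-forth transfer you sketch, with the elementary Novikov-type $L^p$ bound filling the hole you flag about genuine martingality of the stopped exponentials. So the plan is sound in structure and close to the paper's route.

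However, there is a genuine gap in your treatment of the case $P\not\sim R$, and your diagnosis of where mass loss comes from is not correct. You write that $\mathcal{E}(\int_0^\cdot\beta\cdot dM^R)$ loses $R$-mass ``precisely when $R_0\ll P_0$ fails'' and propose to fix this by ``patching the initial law of $R'$''. But when $P\ll R$ and $P\not\sim R$, the density martingale $D_t=E_R[\dPR\mid\sigma(\XXX0t)]$ can be absorbed at $0$ at \emph{any} time in $[0,1]$, not only at $t=0$: the set $\{D_1=0\}$ has positive $R$-measure even when $P_0\sim R_0$. In that case the measure $R'$ you build from $P$ with density $\mathcal{L}_1=\mathcal{E}(-\int\beta\cdot dM^P)_1$ necessarily satisfies $R'\ll P$, hence $R'$ is carried by $\{\dPR>0\}$, and one cannot have $R'=R$: condition~(U) would then force a contradiction. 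No adjustment of $R'_0$ can repair this, so the backward argument as you describe it only closes when $P\sim R$. There is also the secondary but related point that $\beta$ is a priori defined only $P\as$, so without a stopping procedure the test function $u$ and the exponential $\mathcal{E}(\int_0^\cdot\beta\cdot dM^R)$ are not $R\as$ well-defined; the paper handles this by working with the stopped $\tilde u$ up to $\tau^k=\inf\{t:\int_0^t\beta\cdot A\beta\ge k\}$ (after extending $\beta$ by $0$ on the $P$-null set) and then passing $k\to\infty$.

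The missing ingredient is the approximation scheme: set $P_n:=(1-\tfrac1n)P+\tfrac1nR$, so $P_n\sim R$, $H(P_n|R)<\infty$, and run your backward argument for each $P_n$ to get the formula with $\beta^n$. The key analytic input — which your proposal does not supply — is the entropic convergence $\Lim n H(P|P_n)=0$ and the resulting estimate
\begin{equation*}
\Lim n E_P\II01 (\beta_s-\beta_s^n)\cdot A(ds)(\beta_s-\beta_s^n)=0,
\end{equation*}
obtained by testing $H(P|P_n)$ against the exponential supermartingale built from $\beta-\beta^n$. This gives $L^2(P\otimes A)$-convergence $\beta^n\to\beta$ and hence $P\as$-convergence of the Radon--Nikod\'ym formulas along a subsequence, producing the final formula with the indicator $\1_{\{\dPR>0\}}$. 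Without this (or an equivalent device), the step from $P\sim R$ to general $P\ll R$ is not justified.
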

Recall that \eqref{eq-32} implies that  $\II01\beta_t\cdot A(dt)\beta_t<\infty,$ $P\as$ It follows that, although the process $\beta$ is defined only $P\as,$ the stochastic integral $\II0{1}\beta_t\cdot dM^R_t$ is meaningful on $\{\dPR>0\}.$

\subsection*{Processes with jumps in $\Rd$}

The law of a process with jumps is  a probability measure $P$ on the  canonical space
$$
    \OO=D(\ii,\Rd)
$$
of all left limited and right continuous (\cadlag) paths, endowed with its canonical filtration. We denote $X=(X_t)_{t\in\ii}$  the canonical process, $$\Delta X_t=X_t-\lg Xt$$ the jump at time $t$ and $\Rd_*:=\Rd\setminus \{0\}$ the set of all effective jumps. \\
A L\'evy kernel is a random $\sigma$-finite positive measure $$\Lb_\omega(dtdq)=\rho(dt)L_\omega(t,dq),\quad \omega\in\OO$$ on $\iR$ where $\rho$ is assumed to be a $\sigma$-finite positive \emph{atomless}  measure on $\ii.$ As a definition, any L\'evy kernel is assumed to be predictable, i.e.\ $L_\omega(t,dq)=L(X_{[0,t)}(\omega);t,dq)$ for all $t\in\ii.$\\
Let $B$ be a bounded variation \emph{continuous}  adapted process.

\begin{definition}[L\'evy kernel and martingale problem]

We say that a probability measure $P$ on $\OO$ solves the martingale problem $\MP(B,\Lb )$ if the integrability assumption 
\begin{equation}\label{eq-24a}  
E_P\IiR (|q|^2\wedge 1)\,\Lb(dtdq)<\infty
\end{equation}
holds and for any function $f$ in $\mathcal{C}^2_b(\Rd),$ the process
\begin{multline*}
f(\tilde X_t)-f (\tilde X_0)-\int_{(0,t]\times\Rd_*}[f(\lg{\tilde X}s+q)-f(\lg {\tilde X}s)-\nabla f(\lg {\tilde X}s)\cdot q]\, \1_{\{|q|\le1\}}\Lb(dsdq)\\
	-\int_{(0,t]\times\Rd_*}[f(\lg {\tilde X}s+q)-f(\lg {\tilde X}s)]\, \1_{\{|q|>1\}}\Lb(dsdq)
\end{multline*}
is a local $P$-martingale, where $\tilde X:=X-B$.  We write this 
$$
P\in\MP(B,\Lb)
$$
for short. In this case, we also say that $P$  admits the L\'evy kernel $\Lb$ and we denote this property
$$
P\in\LK(\Lb)
$$
for short.
\end{definition}

If $P\in\MP(B,\Lb),$ the canonical process is decomposed as
\begin{equation}\label{eq-22}
X=X_0+B+(\1_{\{|q|> 1\}}q)\odot \mul+ (\1_{\{|q|\le 1\}}q)\odot\mut,\quad P\as
\end{equation}
where $$\mul:= \sum_{t\in\ii; \Delta X_t\not=0}\delta_{(t,\Delta X_t)}$$ is the canonical jump measure, $\varphi(q)\odot\mul=\IiR \varphi\,d\mul=\sum_{t\in\ii; \Delta X_t\not=0}\varphi(\Delta X_t)$ and $\varphi(q)\odot\mut$ is the $P$-stochastic integral with respect to the compensated sum of jumps
$$
\mut _\omega(dtdq):= \mul_\omega(dtdq)-\Lb_\omega(dtdq).
$$

\begin{definition}[Class $\mathcal{H}_{p,r}(P,\Lb )$] Let $P$ be a probability measure on $\OO$ and $\Lb$ a L\' evy kernel such that $P\in \LK(\Lb).$
We say that a predictable integrand $h_\omega(t,q)$ is in the class $\mathcal{H}_{p,r}(P,\Lb )$ if $E_P\IiR\1_{\{|q|\le1\}}|h_t(q)|^p\,\Lb(dtdq)<\infty$ and $E_P\IiR\1_{\{|q|>1\}}|h_t(q)|^r\,\Lb(dtdq)<\infty.$
\\
We denote $\mathcal{H}_{p,p}(P,\Lb )=\mathcal{H}_{p}(P,\Lb )$.
\end{definition}

We take our reference law  $R$  such that
$$
	R\in\MP(B^R,\Lb )
$$ 
for some adapted continuous bounded variation process $B^R.$
The integrability assumption \eqref{eq-24a} 
means that the integrand $|q|$ is in $\mathcal{H}_{2,0}(R,\Lb ).$ This will be always assumed in the future.
We introduce the function
$$
	\theta(a)=\log \mathbb{E} e^{a(N-1)}=e^a-a-1,\quad a\in\RR.
$$
where $N$ is a Poisson(1) random variable. Its convex conjugate is
$$
	\theta^*(b)=\left\{
	\begin{array}{ll}
(b+1)\log(b+1)-1& \textrm{if }b>-1\\
1&\textrm{if }b=-1\\
\infty&\textrm{otherwise}
\end{array}
\right.
,\quad b\in\RR
$$
Note that $\theta$ and $\theta^*$ are
respectively equivalent to $a^2/2$ and $b^2/2$ near zero.

\begin{theorem}[Girsanov's theorem. The jump
case]\label{res-01b}
Let $R$ and $P$ be as above: $R\in\MP(B^R,\Lb)$ and $H(P|R)<\infty.$  Then, there exists a unique
predictable nonnegative process 
$\ell:\OO\times\ii\times\Rd_*\to[0,\infty)$ satisfying
\begin{equation}\label{eq-29}
	 E_P\IiR \theta^*(|\ell-1|)\, d\Lb<\infty,
\end{equation}
such that $P\in\MP(B^R+\widehat B^\ell,\ell\Lb)$ 
where
$$
\widehat B^\ell_t=\int_{[0,t]\times\Rd_*}\1_{\{|q|\le1\}} (\ell_s(q)-1)q\, \Lb(dsdq),\quad t\in\ii
$$
is well-defined $P\as$
\end{theorem}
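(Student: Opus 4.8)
\medskip
\noindent\textbf{Proof proposal.}
Since $H(P|R)<\infty$ we have $P\ll R$, and since $R\in\MP(B^R,\Lb)$ the L\'evy kernel $\Lb$ is the $R$-predictable compensator of the canonical jump measure $\mul$. The plan is to obtain $\ell$ as $1+g$, where $g$ is the Riesz representative of a linear functional that the finite entropy keeps bounded. The right function space is the Orlicz space $L_\theta$ built from the Young function $\theta$ over the predictable $\sigma$-field of $\OO\times\iR$ equipped with $P\otimes\Lb$: since $\theta^*$ satisfies the $\Delta_2$ condition, the dual of the Morse--Transue subspace $E_\theta$ (the closure of the bounded predictable integrands in $L_\theta$) is $L_{\theta^*}$, and for a predictable $g$ one has $g\in L_{\theta^*}(P\otimes\Lb)$ if and only if $E_P\IiR\theta^*(|g|)\,d\Lb<\infty$ --- which for $g=\ell-1$ is exactly \eqref{eq-29}. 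This replaces the Hilbert-space ($L^2$) Riesz argument used in the continuous case.

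The a priori bound comes from testing the relative entropy against jump exponentials. First I would fix a bounded predictable integrand $\psi$ supported on $\{|q|\ge\epsilon\}$ for some $\epsilon>0$, so that on its support $\mul$ has only finitely many atoms and $\Lb$ has finite total mass; for such $\psi$ the Dol\'eans--Dade exponential
$$
\mathcal{E}_t(\psi)=\exp\Big(\int_{(0,t]\times\Rd_*}\!\psi\,d\mul-\int_{(0,t]\times\Rd_*}\!(e^\psi-1)\,d\Lb\Big),\qquad t\in\ii,
$$
is a genuine (not merely local) $R$-martingale with $E_R\mathcal{E}_1(\psi)=1$; this exponential formula for a compound-Poisson-type integral is essentially all that is needed about exponential martingales. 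Since $\int\psi\,d\mul=\int\psi\,d\mut+\int\psi\,d\Lb$, one has $\log\mathcal{E}_1(\psi)=\int_{(0,1]\times\Rd_*}\psi\,d\mut-\int_{(0,1]\times\Rd_*}\theta(\psi)\,d\Lb$, and plugging this into the variational inequality of Proposition \ref{res-02} together with $\theta\ge0$ yields, for every such $\psi$,
$$
\Lambda(\psi):=E_P\!\int_{(0,1]\times\Rd_*}\!\psi\,d\mut\ \le\ E_P\!\int_{(0,1]\times\Rd_*}\!\theta(\psi)\,d\Lb+H(P|R),
$$
the left-hand side being a well-defined real number (its positive part is $P$-integrable since $E_R\mathcal{E}_1(\psi)=1<\infty$ and $\int\theta(\psi)\,d\Lb\in L^1(P)$ for such $\psi$, and the inequality applied to $-\psi$ controls its negative part). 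Replacing $\psi$ by $\psi/\lambda$ for $\lambda>\|\psi\|_{L_\theta(P\otimes\Lb)}$ and letting $\lambda\downarrow\|\psi\|_{L_\theta(P\otimes\Lb)}$ then gives $|\Lambda(\psi)|\le(1+H(P|R))\,\|\psi\|_{L_\theta(P\otimes\Lb)}$. Since these $\psi$'s are dense in $E_\theta(P\otimes\Lb)$, the functional $\Lambda$ extends continuously to $E_\theta$, and by the duality above there is a predictable $g\in L_{\theta^*}(P\otimes\Lb)$ --- i.e.\ satisfying \eqref{eq-29} with $g=\ell-1$ --- such that $\Lambda(\psi)=E_P\int\psi\,g\,d\Lb$ for every test $\psi$. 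I then set $\ell:=1+g$.

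To identify the martingale problem I would compare the two expressions for $\Lambda$: this gives $E_P\int\psi\,d\mul=E_P\int\psi\,\ell\,d\Lb$ for every test $\psi$, and inserting the indicator of a stochastic interval $(0,\tau]$ shows that $\psi\odot(\mul-\ell\Lb)$ is a $P$-martingale (for $\psi$ supported on $\{|q|\ge\epsilon\}$); taking $\psi\ge0$ forces $\ell\ge0$ $P\otimes\Lb$-a.e. Letting $\epsilon\downarrow0$ and combining \eqref{eq-29} with the standing assumption $|q|\in\mathcal{H}_{2,0}(R,\Lb)$ --- via a Cauchy--Schwarz and Young splitting of $\int_{[0,1]\times\Rd_*}\1_{\{|q|\le1\}}\,|q|\,|\ell-1|\,d\Lb$ according to $\{|\ell-1|\le1\}$ and $\{|\ell-1|>1\}$ --- one checks that $\widehat B^\ell$ is well defined $P\as$, that the compensated small-jump integral $(\1_{\{|q|\le1\}}q)\odot(\mul-\ell\Lb)$ makes sense under $P$, and that $\ell\Lb$ again obeys the integrability requirement \eqref{eq-24a}. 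One then has
$$
X=X_0+(B^R+\widehat B^\ell)+(\1_{\{|q|>1\}}q)\odot\mul+(\1_{\{|q|\le1\}}q)\odot(\mul-\ell\Lb),\quad P\as,
$$
the continuous martingale part of $X$ being null under $R$ (as $R\in\MP(B^R,\Lb)$), hence null under $P$; applying It\^o's formula to $f(X-B^R-\widehat B^\ell)$ for $f\in\mathcal{C}^2_b(\Rd)$ recovers exactly the local-martingale condition in the definition of $\MP(B^R+\widehat B^\ell,\ell\Lb)$, so $P\in\MP(B^R+\widehat B^\ell,\ell\Lb)$. Uniqueness is immediate: any two admissible $\ell,\ell'$ both make $\ell\Lb,\ell'\Lb$ the $P$-compensator of $\mul$, so $E_P\int\psi(\ell-\ell')\,d\Lb=0$ for all test $\psi$, whence $\ell=\ell'$ $P\otimes\Lb$-a.e.\ and their predictable versions coincide.

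The main obstacle is the functional-analytic core of the second step: to isolate a class of predictable test integrands which is simultaneously small enough that $\mathcal{E}(\psi)$ is a \emph{true} $R$-martingale and dense in $E_\theta(P\otimes\Lb)$, and to invoke the duality $E_\theta^*=L_{\theta^*}$ with a predictable representative $g$. A second, more computational difficulty is the limit $\epsilon\downarrow0$ of the third step: transferring the $R$-integrability of the truncated jumps to $P$ and checking that $\ell\Lb$ still satisfies \eqref{eq-24a} is precisely where the finiteness of the entropy --- and not merely $P\ll R$ --- is used, and where the truncation by $|q|^2\wedge1$ does the work.
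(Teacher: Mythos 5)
Your proposal is correct and follows essentially the same route as the paper: use the exponential (super)martingale together with the variational representation of $H(P|R)$ to obtain a uniform bound on $h\mapsto E_P(h\odot\mut_1)$ in the Luxemburg $\theta$-norm under $P\otimes\Lb$, then invoke the duality between the Morse--Transue space (the paper's $S_\theta$, your $E_\theta$) and $L_{\theta^*}$ to extract $\ell-1$ satisfying \eqref{eq-29}; this is exactly the Orlicz replacement of martingale representation that the paper is built around, and it plays the role of the paper's Lemma \ref{res-03c}. A few departures are worth recording. You restrict tests to bounded predictable $\psi$ supported on $\{|q|\ge\epsilon\}$ and run an $\epsilon\downarrow0$ limit at the end, whereas the paper tests against its class $\mathcal{H}$ of bounded predictable $h$ with $E_P\IiR|h|\,d\Lb<\infty$; your restriction is actually cleaner, since for such $\psi$ the hypothesis $E_R\IiR\theta(\psi)\,d\Lb<\infty$ of Lemma \ref{res-03c} is visibly implied by \eqref{eq-24a}, while for the paper's $\mathcal{H}$ the $E_P$-integrability of $|h|$ does not immediately give the required $E_R$-integrability of $\theta(h)$. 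On the other hand, you need not (and, without extra exponential moments of $\Lb(|q|\ge\epsilon)$ under $R$, cannot) claim that $\mathcal{E}(\psi)$ is a genuine $R$-martingale; the supermartingale inequality $E_R\mathcal{E}_1(\psi)\le1$, which Fatou gives for free, is all that enters Proposition \ref{res-02}. Second, by building the Orlicz space over the predictable $\sigma$-field from the outset you sidestep the paper's predictable-projection step $k\mapsto k^{\mathrm{pr}}$ --- a legitimate and somewhat tidier alternative. Third, your final paragraph is more explicit than the paper (which essentially stops at \eqref{eq-44}) about why $\widehat B^\ell$ is well-defined and why the martingale-problem conditions hold under $P$; your Young-inequality splitting of $\1_{\{|q|\le1\}}|q|\,|\ell-1|$ is the right computation, provided you distinguish $P\as$ finiteness (inherited from $R\as$ finiteness since $P\ll R$) from $E_P$-finiteness: the former is what well-definedness of $\widehat B^\ell$ and the integrability requirement \eqref{eq-24a} for $\ell\Lb$ actually demand, and is what you have.
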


It will appear that, in several respects, $\log\ell$ is analogous to $\beta$ in Theorem \ref{res-01}.
 Again, $\ell$ only needs to be defined $P\as$ and not $R\as$ for the statement of Theorem \ref{res-01b} to be meaningful. And indeed, its proof will only provide a $P\as$-construction  of $\ell.$

\begin{corollary}\label{res-08}
Suppose that in addition to the assumptions of Theorem \ref{res-01b}, there exist some $a_o,b_o,c_o>0$ such that 
\begin{equation}\label{eq-35}
E_R\exp\left(a_o \IiR \1_{\{|q|>c_o\}} e^{b_o|q|}\,\Lb(dtdq)\right)<\infty.
\end{equation}
It follows immediately that $\1_{\{|q|>c_o\}}|q|$ is $R\otimes\Lb$-integrable so that the stochastic integral $q\odot\mut$ is well-defined $R\as$ and we are allowed to rewrite \eqref{eq-22} as
$$
X =X_0+B+q\odot\mut,\quad R\as,
$$
for some adapted continuous bounded variation process $B.$ 
\\
Then, there exists a unique predictable nonnegative process 
$\ell:\OO\times\ii\times\Rd_*\to[0,\infty)$ satisfying \eqref{eq-29} such that
$$
X =X_0+B+\overline B^\ell+q\odot\muh,\quad P\as,
$$
where
$$
	\overline B_t^\ell=\int_{[0,t]\times\Rd_*} (\ell_s(q)-1)q\, \Lb(dsdq),\quad t\in\ii
$$
is  well-defined $P\as$ and the $P$-stochastic integral $q\odot \muh $  with respect to the L\'evy kernel $\ell\Lb$  is a local $P$-martingale. 
\end{corollary}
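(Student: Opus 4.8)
The plan is to reuse the process $\ell$ produced by Theorem \ref{res-01b} — it already satisfies \eqref{eq-29} and $P\in\MP(B^R+\widehat B^\ell,\ell\Lb)$ — and to reshuffle the associated decomposition so that \emph{all} the jumps of $X$, not just the small ones, become compensated. Applying \eqref{eq-22} to the martingale problem $\MP(B^R+\widehat B^\ell,\ell\Lb)$ gives
\begin{equation*}
X=X_0+B^R+\widehat B^\ell+(\1_{\{|q|>1\}}q)\odot\mul+(\1_{\{|q|\le1\}}q)\odot\muh,\quad P\as
\end{equation*}
Since this decomposition forces $\ell\Lb$ to be the $P$-predictable compensator of $\mul$, which is determined $P\as$, the process $\ell$ is itself determined $P\as$ (up to $\Lb$-negligible sets): this is the uniqueness assertion. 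The substance of the corollary is thus to legitimise moving the compensator of the large jumps from the martingale term into the drift, and this reduces to the single estimate
\begin{equation*}
E_P\IiR\1_{\{|q|>1\}}|q|\,\ell\, d\Lb<\infty .
\end{equation*}

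To obtain it I would glue the two integrability hypotheses at hand by a Young-type splitting. First, feed \eqref{eq-35} and $H(P|R)<\infty$ into the Gibbs variational inequality $E_P u\le H(P|R)+\log E_R e^u$ (a consequence of the variational representation, Proposition \ref{res-02}), applied — after truncating $u$ and letting the truncation tend to $+\infty$ by monotone convergence — to $u=a_o\IiR\1_{\{|q|>c_o\}}e^{b_o|q|}\,\Lb(dtdq)$; this yields $E_P\IiR\1_{\{|q|>c_o\}}e^{b_o|q|}\,\Lb(dtdq)<\infty$. Second, Young's inequality for the conjugate pair $(\theta,\theta^*)$ reads $b_o|q|\,|\ell-1|\le\theta(b_o|q|)+\theta^*(|\ell-1|)\le e^{b_o|q|}+\theta^*(|\ell-1|)$, so integrating against $\1_{\{|q|>c_o\}}\Lb$ under $P$ and using \eqref{eq-29} gives $E_P\IiR\1_{\{|q|>c_o\}}|q|\,|\ell-1|\, d\Lb<\infty$. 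Since $\ell|q|\le|q|+|\ell-1|\,|q|$, these two bounds settle the region $\{|q|>\max(1,c_o)\}$; on the complementary bounded region $\{1<|q|\le c_o\}$ one simply bounds $\1_{\{1<|q|\le c_o\}}\ell|q|\le c_o\,\1_{\{|q|>1\}}\ell$ and invokes $P\in\LK(\ell\Lb)$, that is $E_P\IiR(|q|^2\wedge1)\ell\, d\Lb<\infty$. Adding up yields the displayed estimate, whence also $\IiR\1_{\{|q|>1\}}|q|\,\ell\, d\Lb<\infty$ $P\as$

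The remainder is bookkeeping. The large jumps $(\1_{\{|q|>1\}}q)\odot\mul=\sum_{t:\,|\Delta X_t|>1}\Delta X_t$ form a finite sum $P\as$, and their $P$-compensator $\int_{[0,\cdot]\times\Rd_*}\1_{\{|q|>1\}}q\,\ell\,\Lb(dsdq)$ is an absolutely convergent continuous bounded variation process by the estimate above, so $(\1_{\{|q|>1\}}q)\odot\muh$ is a well-defined local $P$-martingale; together with the small-jump part, already a local $P$-martingale by Theorem \ref{res-01b}, this shows that $q\odot\muh$ is a well-defined local $P$-martingale. Likewise $\overline B^\ell=\widehat B^\ell+\int_{[0,\cdot]\times\Rd_*}\1_{\{|q|>1\}}(\ell_s(q)-1)q\,\Lb(dsdq)$ is well-defined $P\as$, its first term by Theorem \ref{res-01b} and its second by the same estimate. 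Finally, comparing the $R$-decomposition $X=X_0+B+q\odot\mut$ with \eqref{eq-22} written for $R$ identifies $B=B^R+\int_{[0,\cdot]\times\Rd_*}\1_{\{|q|>1\}}q\,\Lb(dsdq)$, $R\as$ and hence $P\as$; substituting this and the relation for $\overline B^\ell-\widehat B^\ell$ into the displayed $P$-decomposition, and then adding and subtracting $\int_{[0,\cdot]\times\Rd_*}\1_{\{|q|>1\}}q\,\ell\,\Lb(dsdq)$ so as to replace $(\1_{\{|q|>1\}}q)\odot\mul$ by $(\1_{\{|q|>1\}}q)\odot\muh$, one arrives at $X=X_0+B+\overline B^\ell+q\odot\muh$, $P\as$, as required.

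The hard part will be the integrability estimate of the second paragraph, and it is precisely there that the peculiar shape of \eqref{eq-35} — an exponential moment of an exponential functional of $\Lb$ — is exploited: the inner $e^{b_o|q|}$ is tuned so that $\theta(b_o|q|)$ is dominated by it and hence controlled under $P$, while the outer exponential is exactly what allows the Gibbs inequality to transfer an $R$-integrability bound into a $P$-integrability bound. Once this estimate is in hand, no new idea beyond the algebra above is needed.
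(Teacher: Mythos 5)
Your proof is correct, and the underlying idea — combining the exponential moment \eqref{eq-35}, the finite entropy, and the energy estimate \eqref{eq-29} via Young/Fenchel duality — is the same as the paper's. The packaging differs: where the paper applies H\"older's inequality in Orlicz spaces twice (once on $(\OO,R)$ to transfer the moment from $R$ to $P$, once on $(\iRO,P\otimes\Lb)$ to pair $|q|$ with $\ell-1$), you replace the first H\"older application with the Gibbs variational inequality $E_Pu\le H(P|R)+\log E_Re^u$ drawn from Proposition~\ref{res-02}-(3), and the second with the raw Fenchel inequality $ab\le\theta(a)+\theta^*(b)$. This is a strictly more elementary route, and it fits the paper's stated preference for bypassing machinery: you never invoke Luxemburg norms or Orlicz duality, at the modest cost of having to handle the region $\{1<|q|\le c_o\}$ by a separate, cheap argument via the integrability condition \eqref{eq-24a} for $\ell\Lb$ under $P$. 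Your final bookkeeping, adding and subtracting the $P$-compensator $\int\1_{\{|q|>1\}}q\,\ell\,d\Lb$ of the big jumps, is equivalent to the paper's more compact symbolic identity $\mut=\widehat\mu+(\ell-1)\Lb$; both are legitimate once \eqref{eq-36} is established. One cosmetic point: the estimate you single out, $E_P\IiR\1_{\{|q|>1\}}|q|\,\ell\,d\Lb<\infty$, is indeed what is needed to make $q\odot\muh$ a local $P$-martingale, but the corollary also asserts that $\overline B^\ell$ is well-defined, which requires $\IiR|q|\,|\ell-1|\,d\Lb<\infty$ $P\as$; you do obtain this from the same two ingredients plus \eqref{eq-24a}, so no gap, but it deserves to be stated as an explicit target alongside your displayed estimate.
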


\begin{remarks}\ 
\begin{enumerate}[(a)]
\item
The energy estimate  \eqref{eq-29} is equivalent to: 
$\1_{\{0\le\ell\le2\}}(\ell-1)^2$ and
$\1_{\{\ell\ge2\}}\ell\log\ell$ are integrable with respect to
$P\otimes\Lb .$
\item
Together with \eqref{eq-29}, \eqref{eq-35} implies that the integral for $\overline B^\ell$ is well-defined since 
\begin{equation}\label{eq-36}
E_P\int_{[0,1]\times\Rd_*} (\ell_t(q)-1)|q|\, \Lb(dtdq)<\infty.
\end{equation}
\end{enumerate}
\end{remarks}

In the present context of processes with jumps, the uniqueness condition (U) becomes:

\par\medskip\noindent
\textbf{Definition} (Condition (U)). One says that $R\in \MP(B^R,\Lb)$ satisfies
the \emph{uniqueness condition} (U) if for any probability
measure $R'$ on $\Omega$ such that the initial laws $R'_0=R_0$
are equal, $R'\ll R$ and  $R'\in \MP(B^R,\Lb)$, we have $R=R'.$

\begin{theorem}[The density $dP/dR$]\label{res-04b}
Suppose that $R$ and $P$ verify $R\in\MP(B,\Lb)$ and $H(P|R)<\infty.$ With $\ell$ given at Theorem \ref{res-01b}, we have
$$
H(P_0|R_0)+ E_P\IiR (\ell\log\ell-\ell+1)\,d\Lb
\le H(P|R)
$$
with the convention $0\log 0-0+1=1.$
\\
If in addition it is assumed that $R$ satisfies the uniqueness
condition \emph{(U)}, then
$$
H(P_0|R_0)+  E_P\IiR (\ell\log\ell-\ell+1)\,d\Lb=H(P|R)$$
and
\begin{equation}\label{eq-42}
 \frac{dP}{dR}
= \1_{\{\dPR>0\}}\frac{dP_0}{dR_0}(X_0)\ 
\widetilde{\exp}\left(\log\ell\odot \mut_1 -\IiR \theta(\log\ell) \,d\Lb
\right).
\end{equation}
In formula \eqref{eq-42}, $\widetilde{\exp}$ indicates a shorthand for the rigorous following expression
\begin{equation}\label{eq-43}
\left\{
\begin{array}{rcl}
\displaystyle{\frac{dP}{dR}}&=&\displaystyle{\frac{dP_0}{dR_0}(X_0) Z^+Z^-}\quad \textrm{with}\\
Z^+&=& \displaystyle{\1_{\{\dPR>0\}}\exp\left([\1_{\{\ell\ge 1/2\}}\log\ell]\odot \mut_1 -\int_{\{\ell\ge 1/2\}}(\ell-\log\ell-1) d\Lb\right)}\\
Z^-&=&\displaystyle{\1_{\{\dPR>0,\tau^-=\infty\}}
\exp\left(-\int_{\{0\le\ell< 1/2\}}[\ell-1] d\Lb\right)}
%\\&&\qquad\qquad\qquad\qquad\qquad\qquad\qquad\times
\displaystyle{
\prod_{0\le t\le1;0<\ell(t,\Delta X_t)<1/2}\ell(t,\Delta X_t)}
\end{array}
\right.
\end{equation}
where 
$$
\tau^-:=\sup_{n\ge1}\inf\left\{t\in\ii;\ell(t,\Delta X_t)\le 1/n \right\}
\in\ii\cup\{\infty\},
$$
 with the convention that $\inf\emptyset=\infty$.
\end{theorem}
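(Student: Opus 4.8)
The plan is to mirror the structure of the continuous-case argument for Theorem \ref{res-04}, replacing the Hilbert-space (Cameron--Martin) computations by the Orlicz-space computations governed by $\theta$ and $\theta^*$. The starting point is the variational representation of the relative entropy (Proposition \ref{res-02}) together with the decomposition of $H(P|R)$ along the filtration. First I would establish the lower bound. Using that $P\in\MP(B^R+\widehat B^\ell,\ell\Lb)$ from Theorem \ref{res-01b}, one identifies the ``Girsanov drift'' in the exponent as $\log\ell$ against the compensated jump measure $\mut$. The quantity $\IiR(\ell\log\ell-\ell+1)\,d\Lb = \IiR\theta^*(\ell-1)\,d\Lb$ is the natural Orlicz energy, finite by \eqref{eq-29} and Remark (a) after Corollary \ref{res-08}. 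Conditioning on the initial $\sigma$-field gives $H(P|R)=H(P_0|R_0)+E_{P_0}H(P^{X_0}|R^{X_0})$ for the regular conditional laws, and the inner entropy is bounded below by the entropy cost of the drift, i.e.\ by $E_P\IiR\theta^*(\ell-1)\,d\Lb$; this is a Jensen-type / chain-rule estimate for entropy along the predictable compensator, and it yields the claimed inequality.

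For the equality case, I would use the uniqueness condition (U). The idea is that under (U), any probability $R'\ll R$ with $R'_0=R_0$ and $R'\in\MP(B^R,\Lb)$ coincides with $R$; more relevantly, (U) forces the conditional law of $P$ given $X_0$ to be the \emph{unique} element of $\MP(B^R+\widehat B^\ell,\ell\Lb)$ absolutely continuous with respect to $R$ and sharing the initial condition, so that $P$ is completely determined by $(P_0,\ell)$. This rigidity is what upgrades the inequality to an equality: the only way the entropy can exceed $H(P_0|R_0)$ is through the drift, and (U) leaves no ``extra randomness'' unaccounted for. Concretely, one checks that the candidate density on the right-hand side of \eqref{eq-42}--\eqref{eq-43} defines a probability measure $Q$ with $Q_0=P_0$, $Q\ll R$, and $Q\in\MP(B^R+\widehat B^\ell,\ell\Lb)$ with the prescribed L\'evy kernel; by (U)-type uniqueness $Q=P$, and then $H(Q|R)$ computes to exactly $H(P_0|R_0)+E_P\IiR\theta^*(\ell-1)\,d\Lb$ by taking $E_P\log(dQ/dR)$ and using that the compensated-integral terms are $P$-martingales with zero expectation.

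The delicate part is making sense of the stochastic exponential $\widetilde{\exp}$ and verifying that the factorization \eqref{eq-43} is well posed and gives a genuine probability density. The issue is that $\log\ell$ need not be integrable against $\Lb$ on the region $\{\ell\text{ small}\}$ (there $\ell\log\ell$ is integrable but $\log\ell$ may blow up), nor on $\{\ell\text{ large}\}$ in the $r$-norm, so $\log\ell\odot\mut_1$ must be split at the threshold $\ell=1/2$. On $\{\ell\ge1/2\}$ one has an honest stochastic integral and an ordinary exponential martingale, controlled by $\theta^*$; on $\{0\le\ell<1/2\}$ one must instead write the density as an explicit (a.s.\ convergent) product $\prod\ell(t,\Delta X_t)$ over the (countably many) jumps landing in that region, compensated by $\exp(-\int_{\{0\le\ell<1/2\}}(\ell-1)\,d\Lb)$, and one needs $\tau^-=\infty$ $P$-a.s.\ (no accumulation forcing the density to vanish), which is exactly where the indicator $\1_{\{\dPR>0\}}$ and the finiteness of $E_P\IiR\theta^*(|\ell-1|)\,d\Lb$ enter. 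I expect the main obstacle to be proving that this patched object is indeed a local $R$-martingale of mean one whose terminal value is $dP/dR$ — equivalently, that the jump-case stochastic exponential identity holds under only the finite-entropy integrability \eqref{eq-29} rather than the usual boundedness assumptions; here one would approximate $\ell$ by truncation (e.g.\ $\ell\vee\epsilon\wedge\epsilon^{-1}$), use the already-established Girsanov Theorem \ref{res-01b} and the lower bound to get uniform control, and pass to the limit using the finite-entropy bound to justify convergence of the exponents in $L^1(P)$.
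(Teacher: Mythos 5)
Your outline is pointing in the right direction in spirit (variational representation, stopping/truncation to tame $\log\ell$, and the uniqueness condition), but there are two genuine gaps, both concerning how (U) actually enters. First, your plan for the equality is to construct a candidate measure $Q$ from the right-hand side of \eqref{eq-42}--\eqref{eq-43}, verify $Q\in\MP(B^R+\widehat B^\ell,\ell\Lb)$ with $Q_0=P_0$, and then invoke "(U)-type uniqueness" to deduce $Q=P$. But (U) is a hypothesis on the \emph{reference} martingale problem $\MP(B^R,\Lb)$; it says nothing about uniqueness for the perturbed problem $\MP(B^R+\widehat B^\ell,\ell\Lb)$, and there is no reason the latter should inherit it. The paper's actual mechanism is different: one Girsanov-transforms in the reverse direction with $-\log\ell$ (i.e.\ $\ell^{-1}$) so that both $Q^k_j:=\mathcal{E}(\ldots)P^k_j$ and $\widetilde Q^k_j:=\mathcal{E}(\ldots)\widetilde P^k_j$ land in the \emph{original} stopped problem $\MP(B^{\sigma^k_j},\1_{\llbracket0,\sigma^k_j\rrbracket}\Lb)$, where (U) does apply (after checking, as in Lemma \ref{res-06}, that (U) is preserved under stopping). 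The identification $P^k_j=\widetilde P^k_j$ then follows by cancelling the common positive exponential factor. Without this two-way transfer, your uniqueness step does not close.

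Second, you never address the fact that $\ell$ (hence any stopping time or truncation built from it, and the event $\{\tau^-=\infty\}$) is only defined $P$-a.s., not $R$-a.s. The intermediate density identities on $\llbracket 0,\tau^k_j\wedge1\rrbracket$ are $R$-a.s.\ statements, so the whole argument as you sketch it is only valid when $P\sim R$. The paper handles this by first proving everything under $P\sim R$, and then passing to general $P$ via the entropy-controlled approximation $P_n:=(1-1/n)P+R/n$: one shows $\Lim n H(P|P_n)=0$, derives the energy estimate $\Lim n E_P\IiR\theta(\lambda^n-\lambda)\,d\ell\Lb=0$ (the jump analogue of \eqref{eq-51}), and uses it to pass the density formula to the limit. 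Relatedly, your plan to "pass to the limit using the finite-entropy bound to justify convergence of the exponents in $L^1(P)$" misses a simplification in the paper: Lemma \ref{res-09} shows $\tau^-=\infty$ $P$-a.s., so in the equivalent case the stopping times $\tau_j$ are eventually stationary $\omega$-by-$\omega$ and no limit theorem for stochastic integrals is needed at all; the real limiting argument is concentrated in the $P_n\to P$ step. Finally, the lower bound in the paper is not a soft "Jensen along the compensator" estimate but comes from plugging the explicit test function $\tilde u=\log\frac{dP_0}{dR_0}(X_0)+(\1_{(0,\tau^k_j\wedge1]}\log\ell)\odot\mut-\int_{(0,\tau^k_j\wedge1]\times\Rd_*}\theta(\log\ell)\,d\Lb$ into Proposition \ref{res-02}(3) and using that $e^{\tilde u}$ is a supermartingale (Lemma \ref{res-03c}), so $-\log\int e^{\tilde u}\,dR^k_j\ge0$; your sketch should be made concrete along those lines to be checkable.
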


Note that although $\ell$ is only defined $P\as,$ $Z^+$, $Z^-$ and $\tau^-$ are meaningful thanks to the prefactors $\1_{\{\dPR>0\}}.$ 

\begin{remarks}\ \begin{enumerate}[(a)]
\item
Because of \eqref{eq-29}, the integral  $\int_{\{\ell\ge 1/2\}}(\ell-\log\ell-1) d\Lb$ inside $Z^+$ is finite $P\as$
\item
Similarly, the product $\prod_{0\le t\le1;0<\ell(t,\Delta X_t)<1/2}\ell(t,\Delta X_t)$ doesn't vanish $P\as$ because it is proved at Lemma \ref{res-09} that $P(\tau^-=\infty)=1.$
\item
Note that this product  is well-defined in $[0,1]$ since it contains $P\as$ at most countably many terms in $(0,1/2].$ But, if it contains infinitely many such terms, it vanishes. Therefore, it contains $P\as$ finitely many terms.
\item 
Since $\inf\left\{t\in\ii;\ell(t,\Delta X_t)=0 \right\}\ge \tau^-,$
if $\ell(t,\Delta X_t)=0$ for some $t\in\ii,$ then $\dPR=0.$ Therefore, $\ell>0,$ $P\as$
\item
If $\1_{\{\ell\ge1/2\}}\log\ell$ is $R\otimes\Lb$-integrable, an alternate expression of $\dPR$ is
$$
\dPR=\1_{\{\dPR>0,\tau^-=\infty\}} \frac{dP_0}{dR_0}(X_0)\exp\left(-\IiR (\ell-1)\, d\Lb\right)\prod_{0\le t\le1}\ell(t,\Delta X_t).
$$
\item
If $\1_{\{0\le\ell< 1/2\}}\log\ell$ is not $R\otimes\Lb$-integrable, then $\log\ell\odot \mut_1$ is undefined and \eqref{eq-42} with $\exp$ instead of $\widetilde\exp$ is meaningless and must be replaced by \eqref{eq-43}.
\item
On the other hand, if $\ell>0,$ $R\as$ and $E_R\IiR\theta(\log\ell)\,d\Lb<\infty,$ then  \eqref{eq-42} gives the rigorous expression for $\dPR$ with $\exp$ instead of $\widetilde\exp$.
\end{enumerate}\end{remarks}
For more details about the relationship between \eqref{eq-42} and \eqref{eq-43}, see the discussion below Proposition \ref{res-03b} at the Appendix.

\section{Variational representations of the relative entropy}

Theorems \ref{res-01} and \ref{res-01b}'s proofs rely on some variational
representation of the relative entropy which is stated and proved below.

\begin{proposition}[Variational representations of the relative
entropy]\label{res-02}
Let $R$ be a probability measure on some space $\OO.$
\begin{enumerate}
    \item For any signed bounded measure $P$ on
    $\OO,$ we have
    \begin{eqnarray*}
&& \sup\left\{\int u\,dP-\log\int e^u\,dR; u\textrm{ bounded
measurable}\right\}\\
   &=& \sup\left\{\int u\,dP-\log\int e^u\,dR; u\in
   L^\infty(P)\right\}\\
   &=& \left\{
    \begin{array}{ll}
H(P|R)\in[0,\infty], & \textrm{if $P$ is a probability measure
and }P\ll R \\
      \infty, & \textrm{otherwise}. \\
    \end{array}
   \right.
\end{eqnarray*}
    \item For any probability measure $P$ on $\OO$
such that $P\ll R,$
\begin{equation*}
    H(P|R) =\sup\left\{\int u\,dP-\log\int e^u\,dR; u: \int
    e^{u}\,dR<\infty, \int u_-\,dP<\infty
    \right\}\in[0,\infty]
\end{equation*}
where $u$ is measurable, $u_-=(-u)\vee 0$ and $\int
u\,dP\in(-\infty,\infty]$ is well-defined for all $u$ such that
$\int u_-\,dP<\infty.$
\item If in addition it is known that the probability measure
$P$ satisfies $H(P|R)<\infty,$ then
any measurable function $u$ such that $\int e^u\,dR<\infty$
verifies $u\in
L^1(P)$ and we have
\begin{equation}\label{eq-11}
H(P|R) =\sup\left\{\int u\,dP-\log\int e^u\,dR; u: \int
e^u\,dR<\infty\right\}.
\end{equation}
In this formula the supremum is taken over all measurable
functions  $u:\Omega\to [-\infty,\infty),$ possibly taking the
value $-\infty$ with the convention $e^{-\infty}=0.$ On the
other
hand, the supremum is attained at
$u^*=\1_{\{dP/dR>0\}}\log(dP/dR)-\1_{\{dP/dR=0\}}\infty,$
corresponding to $e^{u^*}=dP/dR.$ If $R$ is not a Dirac measure,
this supremum is
uniquely attained.
\end{enumerate}
\end{proposition}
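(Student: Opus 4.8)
The plan is to base the whole argument on one elementary tool, the Young inequality for the pair of convex conjugates $\phi(v)=v\log v-v+1$ on $[0,\infty)$ and $\phi^*(u)=e^u-1$:
\[
 uv\ \le\ v\log v-v+e^u,\qquad u\in\RR,\ v\ge 0,
\]
with equality if and only if $v=e^u$ (conventions: $0\log 0=0$, $e^{-\infty}=0$). I will write $f:=dP/dR$ whenever $P\ll R$, and use repeatedly that $\int(f\log f)_-\,dR\le 1/e$ (because $-t\log t\le 1/e$ on $[0,1]$) and that $\{f=\infty\}$ is $R$-negligible, $P$ having finite mass.

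For the inequalities ``$\ge$'', I would exhibit explicit test functions. If $P$ is a probability with $P\ll R$, the candidate maximizer is $u^*:=\log f$ (with $\log 0:=-\infty$), so $e^{u^*}=f$ and $\int u^*\,dP-\log\int e^{u^*}\,dR=H(P|R)-\log 1=H(P|R)$; here $u^*\in L^1(P)$ since $\int(\log f)_-\,dP\le 1/e$ and $\int(\log f)_+\,dP=\int_{\{f\ge 1\}}f\log f\,dR\le H(P|R)+1/e$. In item~(1), where only bounded $u$ are admissible, I would instead use the two–sided truncations $u_n:=\big((\log f)\vee(-n)\big)\wedge n\in[-n,n]$ and let $n\to\infty$: dominated convergence gives $\int e^{u_n}\,dR\to\int f\,dR=1$ (note $e^{u_n}\to f$ off the $R$-null set $\{f=\infty\}$, dominated by $f\vee 1$), and monotone/dominated convergence gives $\int u_n\,dP\to\int\log f\,dP=H(P|R)\in[0,\infty]$, so the supremum over bounded $u$ is $\ge H(P|R)$. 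When $P$ is not a probability, or a probability with $P\not\ll R$, the supremum is $+\infty$: test with constants $u\equiv c$, $c\to\pm\infty$, when the total mass is $\ne 1$; and with $u=c\,\1_A$ (resp.\ $u=-c\,\1_A$), $c\to+\infty$, for an $R$-negligible set $A$ carrying positive mass of $P$ (resp.\ of its negative part) when $P\not\ll R$.

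For the inequalities ``$\le$'' I may assume $P$ is a probability with $P\ll R$ and $H(P|R)<\infty$, the remaining cases being trivial. Let $u$ be admissible and $Z:=\int e^u\,dR$; then $0<Z<\infty$ (if $Z=\infty$ the functional is $-\infty$; if $Z=0$ then $u=-\infty$ $R$-a.s., hence $P$-a.s., which either violates the constraint of item~(2) or makes the expression of item~(3) meaningless). Since the functional is invariant under $u\mapsto u+c$ ($P$ being a probability), I may replace $u$ by $u-\log Z$ and assume $\int e^u\,dR=1$; integrating the Young inequality with $v=f$ against $R$ then yields $\int u\,dP=\int uf\,dR\le H(P|R)-1+1=H(P|R)$, the only point being that $(uf)_+\le(f\log f)_++e^u$ is $R$-integrable so that $\int uf\,dR$ is well defined in $[-\infty,\infty)$. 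This already settles item~(1) (the supremum over bounded $u$ is squeezed between $H(P|R)$ and the supremum over $P$-essentially bounded $u$, for which $u_-\in L^1(P)$ and item~(2)'s bound applies) and item~(2). For item~(3) one first checks that the functional is meaningful on the enlarged class: integrating the Young inequality over $\{u>0\}$ gives $\int u_+\,dP\le\int e^u\,dR+\int(f\log f)_+\,dR<\infty$ for \emph{every} $u$ with $e^u\in L^1(R)$, so $\int u\,dP\in[-\infty,\infty)$ is always defined; and since this class contains $u^*$, the supremum in \eqref{eq-11} is attained. Finally, the equality case $v=e^u$ of Young's inequality forces any maximizer normalized by $\int e^u\,dR=1$ to coincide with $u^*$ off an $R$-negligible set (a general maximizer being $u^*$ up to an additive constant), which is the uniqueness statement; this degenerates precisely when $R$ is a Dirac mass, where every function maximizes.

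I do not expect a conceptual obstacle: everything sits in the Young inequality and the calculus bound $-t\log t\le 1/e$. The actual work is bookkeeping — keeping $\int uf\,dR$ and $\int u\,dP$ well defined throughout, which is where the asymmetry between $u_+$ (automatically $P$-integrable once $e^u\in L^1(R)$ and $H(P|R)<\infty$) and $u_-$ (which need not be) has to be handled with care — together with justifying the limits in the truncation argument and enumerating the degenerate cases ($P$ not a probability, $P\not\ll R$, $Z\in\{0,\infty\}$, $R$ a Dirac mass).
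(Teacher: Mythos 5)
You take essentially the same route as the paper: the Young/Fenchel inequality for the conjugate pair $(v\log v-v+1,\ e^u-1)$, the explicit candidate maximizer $u^*=\log(dP/dR)$ together with two‑sided truncations to reach the bounded class, the constant‑shift trick to handle the degenerate cases, and a shift in $u$ to pass from the $\int(e^u-1)\,dR$ form to the $\log\int e^u\,dR$ form (your normalization $u\mapsto u-\log\int e^u\,dR$ is the same move as the paper's use of $\log a=\inf_{b\in\RR}\{ae^b-b-1\}$, exploiting $P(\OO)=1$). So the method is the paper's, reorganized but not different in substance.

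One place where you quietly deviate from the statement, and where your version is actually the correct one, is item (3). You deduce only $\int u_+\,dP<\infty$ from $\int e^u\,dR<\infty$ and $H(P|R)<\infty$, and hence that $\int u\,dP$ is well defined in $[-\infty,\infty)$; you do not claim $u\in L^1(P)$. The paper's stronger assertion $u\in L^1(P)$ is supported there by the inequality $|u|Z\le\theta(Z)+e^u$, which fails for $u<0$ (take $u=-10$, $Z=1$: $10\not\le 0+e^{-10}$). Indeed $u(x)=-1/x$ with $R=P$ Lebesgue on $[0,1]$ satisfies $\int e^u\,dR<\infty$ and $H(P|R)=0$, yet $u\notin L^1(P)$. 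The one‑sided integrability $\int u_+\,dP<\infty$ is exactly what \eqref{eq-11} requires for the supremand to be well defined and bounded above, and your bookkeeping delivers it; it would be worth making this discrepancy explicit rather than silently proving the corrected version.
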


\begin{proof} Let us first  \textbf{prove (1)}.
Denote
$$
\kappa:=\sup\left\{\int u\,dP-\log\int e^u\,dR; u\textrm{ bounded
measurable}\right\}
$$
and
$$
\kappa':=\sup\left\{\int u\,dP-\log\int e^u\,dR; u\in
L^\infty(P)\right\}.
$$
Let us show that when $P$ in not positive, i.e.\ $P_-\not=0$,
then
$\kappa=\kappa'=\infty.$ Let $A$ be a measurable subset such that
$P_+(A)=0$
and $P_-(A)>0.$ Then, choosing $u_a=-a\1_A$ with $a>0,$ we see
that
    $\kappa\ge\Lim a (\int u_a\,dP-\log\int e^{u_a}\,dR)
=\Lim a (aP_-(A)-\log[1+(e^{-a}-1)R(A)])=+\infty.$ Similarly for
$\kappa'.$
\\
Now, suppose that $P$ is a positive measure such that
$P(\Omega)\not=1.$ Let us show that $\kappa=\kappa'=\infty.$ Considering
the
constant functions $u\equiv a\in\RR,$ we see that
   $\int a\,dP-\log\int e^a\,dR=a(P(\Omega)-1).$ Letting $|a|$
tend to infinity, we obtain $\kappa\ge
\sup_a\left\{a(P(\Omega)-1)\right\}=\infty.$ And similarly for
$\kappa'.$
\\
Let us show that, if the probability measure $P$ is not
absolutely
continuous with respect to $R,$ then $\kappa=\kappa'=\infty.$ For such a
$P,$ there is a measurable set $A$ such that $P(A)>0$ and
$R(A)=0.$ Considering the functions $u=a\1_A,$ we see that
    $\kappa\ge \sup_a\{aP(A)-0\}=\infty,$ and similarly for $\kappa'.$

\emph{From now on, $P$ is a probability measure such that $P\ll
R.$}
\\
Let us have a look at the first equality of assertion (1). Since
the bounded functions
are in $L^\infty(P)$, it is clear that
$\kappa\le
\kappa'.$ On the other hand, we also have $\kappa'\le \kappa.$ Indeed, one can
write any $u$ in $L^\infty(P)$ as
$u=\1_{\{dP/dR>0\}}v+\1_{\{dP/dR=0\}}w$ where $v$ is bounded and
$w$ is unspecified. But,\begin{eqnarray*}
   && \int u\,dP-\log\int e^u\,dR\\
    &=&\int v\,dP-\log\left(\int \1_{\{dP/dR>0\}}e^v\,dR+\int
    \1_{\{dP/dR=0\}}e^w\,dR\right)\\
    &\le& \int v\,dP-\log\int \1_{\{dP/dR>0\}}e^v\,dR\\
    &=&\Lim n\left(
    \int u_n\,dP-\log\int e^{u_n}\,dR\right)
\end{eqnarray*}
with $u_n:=\1_{\{dP/dR>0\}}v-n\1_{\{dP/dR=0\}}$. As the
functions
$u_n$ are bounded, we see that $\kappa'\le \kappa.$
\\
To prove (1), it remains to show that
\begin{equation}\label{eq-20}
    \kappa=H(P|R).
\end{equation}

We begin \textbf{proving (2)}. The identity \eqref{eq-20} will
appear as a step. The
remainder of the proof relies on Fenchel's inequality for the
convex $\theta(a):=a\log
a-a+1$ and on its
equality case. This inequality is
\begin{equation}\label{eq-04}
     ab\le (a\log a-a+1)+(e^b-1)=\theta(a)+(e^b-1)
\end{equation}
for all $a\in[0,\infty),$ $b\in[-\infty,\infty),$  with the
conventions $0\log 0=0,$ $e^{-\infty}=0$ and $-\infty\times 0=0$
which are legitimated by
limiting procedures. The equality is
realized if and only if $a=e^b.$
\\
We denote $Z:=\frac{dP}{dR}$ for a simpler notation. Taking
$a=Z(\omega),$ $b=u(\omega)$ and integrating with respect to $R$
leads us to
$$
\int u\,dP\le \int\theta(Z)\,dR+\int(e^u-1)\,dR=
H(P|R)+\int(e^u-1)\,dR,
$$
whose terms are meaningful provided that they are understood in
$(-\infty,\infty],$ as
soon as $\int u_-\,dP<\infty.$ Formally,
the equality case corresponds to $Z=e^u.$ By the monotone
convergence theorem, it can be approximated by the sequence
$u_n=
\log(Z\vee e^{-n}),$ as $n$ tends to infinity. This gives us
\begin{equation}\label{eq-19}
    H(P|R)=\sup\left\{\int u\,dP-\int (e^u-1)\,dR; u: \int
e^{u}\,dR<\infty,\inf u>-\infty \right\},
\end{equation}
which in turn implies that
\begin{equation}\label{eq-05}
    H(P|R)=\sup\left\{\int u\,dP-\int (e^u-1)\,dR; u: \int
e^{u}\,dR<\infty,\int u_-\,dP<\infty \right\},
\end{equation}
since the integral $\int \log Z\,dP=\int
\theta(Z)\,dR\in[0,\infty]$ is well-defined.
\\
Now, let us take advantage of the unit mass of $P:$
$$
\int (u+b)\,dP-\int (e^{(u+b)}-1)\,dR=\int u\,dP-e^b\int e^u\,dR
+b+1,\quad \forall b\in
\RR.
$$
Thanks to the elementary identity $\log
a=\inf_{b\in\RR}\{ae^b-b-1\},$ we obtain
$$
\sup_{b\in\RR}\left\{\int
(u+b)\,dP-\int(e^{(u+b)}-1)\,dR\right\}=\int u\,dP-\log\int
e^u\,dR.
$$
Hence,
\begin{eqnarray*}
&&\sup\left\{\int u\,dP-\int (e^u-1)\,dR; u: \int
e^{u}\,dR<\infty,\int u_-\,dP<\infty
  \right\}\\
&=& \sup\left\{\int (u+b)\,dP-\int (e^{(u+b)}-1)\,dR;b\in\RR, u:\int e^{u}\,dR<\infty,\int u_-\,dP<\infty \right\}\\
&=& \sup\left\{\int u\,dP-\log\int e^u\,dR; u: \int
e^{u}\,dR<\infty,\int u_-\,dP<\infty
  \right\},
\end{eqnarray*}
With \eqref{eq-05}, this proves assertion (2).
\\
But a similar reasoning, starting from \eqref{eq-19} instead of
\eqref{eq-05}, leads us to the similar following conclusion
$$
H(P|R)=\sup\left\{\int u\,dP-\log\int e^u\,dR; u: \int
e^{u}\,dR<\infty,\inf u>-\infty\right\}.
$$
Considering the functions $u\wedge n$ with $\inf u>-\infty$ and
letting $n$ tend to infinity, this leads us to \eqref{eq-20} and
proves assertion (1).

\textbf{Let us prove (3)}. Suppose that $H(P|R)<\infty.$ With
the
inequality \eqref{eq-04}, we obtain
    $
    |u|Z=|uZ|\le \theta(Z)+ e^u.
    $
Therefore, if  $\int e^u\,dR<\infty,$ then
\begin{equation*}
    E_P|u|=E_R(|u|Z)\le
    E_R\theta(Z)+E_Re^u=H(P|R)+E_Re^u<\infty.
\end{equation*}
This means that $u$ is $P$-integrable and shows \eqref{eq-11}.

We check directly the equality case. The uniqueness of its
realization comes from the \emph{strict} concavity of the
function
$u\mapsto \int u\,dP-\log\int e^u\,dR.$ One shows the strict
convexity of $u\mapsto \log\int e^u\,dR$ by means of H\"older's
inequality. But it is
also possible to come back to the
representation \eqref{eq-05} which, with the same reasoning as
above, leads us to
\begin{equation*}
H(P|R)=\sup\left\{\int u\,dP-\int (e^u-1)\,dR; u:\int
e^u\,dR<\infty\right\}.
\end{equation*}
Then, one directly reads the strict convexity of $u\mapsto\int
(e^u-1)\,dR$.
\end{proof}

\section{Proof of Theorem \ref{res-01}}

For the proof of Theorem \ref{res-01} we need to exhibit a large
enough family of
exponential supermartingales.

\begin{lemma}[Exponential supermartingales]\label{res-03}
Let $M$ be a  local martingale, then
$$
Z^M_t=\exp\left(M_t-\frac12 [M,M]_t\right),\quad 0\le t\le 1,
$$
is also a local martingale and a supermartingale. In particular,
$0\le E_RZ^M_1\le1.$
\end{lemma}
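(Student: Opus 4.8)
The plan is to recognize $Z^M$ as the stochastic exponential of $M$ --- which, $M$ being continuous throughout Section~4, is exactly $\exp(M-\frac12[M,M])$ --- and then to invoke the elementary fact that a nonnegative local martingale is a supermartingale. I use the usual normalization $M_0=0$, so that $Z^M_0=1$. First I would apply Itô's formula to the $C^2$ function $x\mapsto e^x$ and the continuous semimartingale $Y:=M-\frac12[M,M]$. Since $[M,M]$ has continuous sample paths of bounded variation it contributes nothing to any quadratic covariation, so $[Y,Y]=[M,M]$, and Itô's formula gives
$$
dZ^M_t=Z^M_t\,dY_t+\frac12 Z^M_t\,d[Y,Y]_t=Z^M_t\Big(dM_t-\frac12\,d[M,M]_t\Big)+\frac12 Z^M_t\,d[M,M]_t=Z^M_t\,dM_t .
$$
Hence $Z^M_t=1+\int_0^t Z^M_s\,dM_s$; the integrand $Z^M$ is continuous and adapted, hence locally bounded and predictable, so this stochastic integral against the continuous local martingale $M$ is well-defined and is again a continuous local martingale. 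This proves the first assertion.

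Next, $Z^M\ge0$ by construction. To obtain the supermartingale property I would take a localizing sequence of stopping times $(\tau_n)$ for the local martingale $Z^M$ and apply the conditional form of Fatou's lemma: for $0\le s\le t\le1$,
$$
E_R[Z^M_t\mid\mathcal F_s]=E_R\big[\lim_n Z^M_{t\wedge\tau_n}\mid\mathcal F_s\big]\le\liminf_n E_R[Z^M_{t\wedge\tau_n}\mid\mathcal F_s]=\liminf_n Z^M_{s\wedge\tau_n}=Z^M_s ,
$$
where the penultimate equality uses that each stopped process $Z^M_{\cdot\wedge\tau_n}$ is a genuine martingale and the last one uses continuity. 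Taking $s=0$ shows in particular that $E_R Z^M_t\le Z^M_0=1<\infty$, so each $Z^M_t$ is $R$-integrable and the displayed inequality is a genuine supermartingale inequality. Finally, with $t=1$, $s=0$ we get $E_R Z^M_1\le1$, while $Z^M_1\ge0$ gives $E_R Z^M_1\ge0$.

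I do not expect any real obstacle: Itô's formula does all the work and the supermartingale step is the standard Fatou argument. The one point worth flagging is that the identity $dZ^M=Z^M\,dM$, hence the local martingale property of $Z^M$, genuinely relies on the continuity of $M$ --- this is exactly why the lemma is placed in the continuous-path part of the paper. For a local martingale with jumps the relevant object is instead the Dol\'eans--Dade exponential $\mathcal E(M)$, and $\exp(M-\frac12[M,M])$ would fail to be a local martingale.
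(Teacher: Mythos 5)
Your proof is correct and follows essentially the same approach as the paper: apply It\^o's formula to $Y=M-\frac12[M,M]$ with $f(y)=e^y$ to get $dZ^M=Z^M\,dM$, hence $Z^M$ is a nonnegative local martingale, and then use Fatou's lemma along a localizing sequence to obtain the supermartingale property. The extra remarks you add (the explicit stochastic-integral representation, the conditional form of Fatou, and the caveat about the jump case) are sound elaborations but do not change the argument.
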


\begin{proof}
Recall It\^o's formula
$$
df(Y_t)=f'(Y_t)\,dY_t+\frac12f''(Y_t)\,d[Y,Y]_t
$$
which is valid for any $\mathcal{C}^2$ function $f$  and any continuous
semimartingale $Y$. Applying it to $Y_t=M_t-\frac12 [M,M]_t$ and
$f(y)=e^y,$ we obtain
$$
dZ^M_t=Z^M_t\left(dM_t-\frac12 d[M,M]_t+\frac12 d[M,M]_t\right)
    =Z^M_t\,dM_t
$$
which proves that $Z^M$ is a local martingale. Since $Z^M\ge0,$
Fatou's lemma applied to the localized sequence
$Z^M_{t\wedge\tau_k}$ as $k$ tends to infinity tells us that
$Z^M$
is a $R$-supermartingale, with $\seq\tau{k}$ an increasing
sequence of stopping times which tends almost surely to infinity
and localizes the local
martingale $M$. In particular,
$E(Z^M_1)\le E(Z^M_0)=1.$
\end{proof}

The standard notation for the supermartingale of Lemma
\ref{res-03} is
$$
    \mathcal{E}(M):=\exp\left(M-\frac12 [M,M]\right).
$$
We are now ready for the proof of Theorem \ref{res-01}.

\begin{proof}[Proof of Theorem \ref{res-01}]
We start with some useful notation. Let $Q$ be a probability
measure on $\OO;$ later we shall take $Q=R$ or $Q=P.$ For any
measurable function $g$ on $\iO,$ let us denote
    $$
    (g,g)_A(\omega):=\Ii g_t(\omega)\cdot
    A_t(\omega;dt)g_t(\omega)\in[0,\infty]
    $$
and introduce the function space
$$
\mathcal{G}(Q):=\left\{g:\iO\to \Rd; g\textrm{ measurable}, E_Q
(g,g)_A <\infty
    \right\}
$$
endowed with the seminorm $\|g\|_{\mathcal{G}(Q)}:=(E_Q
(g,g)_A)^{1/2}.$ Identifying the functions with their
equivalence
classes when factorizing by the kernel of this seminorm, turns
$\mathcal{G}(Q)$ into a Hilbert space. These equivalence classes
are called
$\mathcal{G}(Q)$-classes and with some abuse, we say
that two elements of the same class are equal
$\mathcal{G}(Q)$-almost everywhere. The relevant space of
integrands for the stochastic integral is
$$
     \mathcal{H}^Q:=\{h\in \mathcal{G}(Q); h\textrm{
     adapted}\}.
$$
Identity \eqref{eq-03} says that $M^R=X-X_0-B^R$ is a local
$R$-martingale. For all $h\in\mathcal{H}^R,$ let us denote the
stochastic integral
$$
    h\cdot M^R_t:=\int_0^t h_s\,dM^R_s,\quad t\in\ii.
$$
By Lemma \ref{res-03},
    $
0<E_RZ^{h\cdot M^R}_1\le1$ for all $h\in\mathcal{H}^R
    $
and  because of \eqref{eq-11}, for any probability measure $P$
such that $H(P|R)<\infty,$ we have
\begin{equation}\label{eq-08}
E_P\left(h\cdot M^R_1-\frac12 [h\cdot M^R,h\cdot M^R]_1\right)\le H(P|R),\ \forall
h\in\mathcal{H}^R.
\end{equation}
Note that, as $P\ll R,$ $h\cdot M^R_1$ and $[h\cdot M^R,h\cdot M^R]_1$  which are
defined $R\as,$ are a fortiori defined $P\as$  With \eqref{eq-01} and \eqref{eq-08}, we see that
\begin{equation}\label{eq-06}
    E_P(h\cdot M^R)\le H(P|R)+\frac12 E_P (h,h)_A,\quad \forall
    h\in  \mathcal{G}(P)\cap \mathcal{H}^R.
\end{equation}
The notation $\mathcal{G}(P)\cap \mathcal{H}^R$ is a little bit
improper. Indeed,
$\mathcal{G}(P)$ is a set of equivalence
classes
with respect to the equality $\mathcal{G}(P)\ae$, while
$\mathcal{H}^R$ is a set of $\mathcal{G}(R)$-classes. But since
$P\ll R,$ keeping in mind that any $\mathcal{G}(P)$-class is the
union of some
$\mathcal{G}(R)$-classes, one can interpret
$\mathcal{G}(P)\cap\mathcal{G}(R)$ as a set of
$\mathcal{G}(P)$-classes. It is also clear that
$\mathcal{G}(P)\cap\mathcal{H}^R=\mathcal{H}^P\cap\mathcal{H}^R$
which is a set of
$\mathcal{G}(P)$-classes. Considering $-h$ in
\eqref{eq-06}, we obtain for all $\lambda>0,$
\begin{equation*}
    \left| E_P \left(\frac h\lambda\cdot M^R\right)\right|
    \le H(P|R)+\frac1{2\lambda^2}E_P  (h,h)_A,\quad \forall
    h\in \mathcal{H}^P\cap\mathcal{H}^R.
\end{equation*}
Let
$$
    S :=\left\{h:[0,1]\times\Omega\to \mathbb{R}^d;
h=\sum_{i=1}^k h_i\1_{\rrbracket S_i,T_i\rrbracket} \right\}$$
denote the set of all \emph{simple} adapted processes $h$ where
$k$ is finite and for all $i,$ $h_i\in\mathbb{R}^d$ and $S_i\le
T_i$ are stopping times.
As $S \subset
\mathcal{H}^P\cap\mathcal{H}^R,$ taking
$\lambda=\|h\|_{\mathcal{G}(P)}$ in previous inequality, we
obtain
the keystone of the proof:
\begin{equation*}
    | E_P(h\cdot M^R)|
    \le [H(P|R)+1/2 ]\,\|h\|_{\mathcal{G}(P)},
    \quad \forall h\in S .
\end{equation*}
This estimate still holds when $\|h\|_{\mathcal{H}(P)}=0.$
Indeed,
for all real $\alpha$, by \eqref{eq-06} we see that $\alpha
E_P(h\cdot M^R)\le H(P|R)+\alpha^2/2\ E_P (h,h)_A=H(P|R).$
Letting
$|\alpha|$ tend to infinity, it follows that $E_P(h\cdot
M^R)=0.$

Under the assumption that $H(P|R)$ is finite, this means that
$h\mapsto h\cdot M^R$ is continuous on $S $ with
respect
to the Hilbert topology of $\mathcal{H}^P.$ As $S $ is
dense in $\mathcal{H}^P,$ this linear form extends uniquely as a continuous linear form on $\mathcal{H}^P.$ It also appears that
this extension is again a stochastic integral \emph{with respect
to $P.$} We still denote
this extension by $h\cdot M^R.$
\\
As $h\mapsto h\cdot M^R$ is a continuous linear form on
$\mathcal{H}^P,$ we know by the Riesz representation theorem
that
there exists a unique $\beta\in\mathcal{H}^P$ such that
\begin{equation*}
    E_P(h\cdot M^R)=E_P  (\beta, h)_A,
    \quad \forall h\in \mathcal{H}^P.
\end{equation*}
In other words,
\begin{equation*}
    E_P\Ii h_t\,dM^P_t=0,\quad\forall h\in \mathcal{H}^P
\end{equation*}
where
$$
M^P_t:=M^R_t-\II0t A(ds)\beta_s=X_t-X_0-B^R_t-\widehat{B}_t,
$$
which means that $M^P$ is a local $P$-martingale.
\end{proof}

\section{Proof of Theorem \ref{res-04}}

It relies on a transfer result which is stated below at Lemma
\ref{res-05}. But we first need to introduce its framework and
some additional notation. Let $P$ be a probability measure on
$\OO$ such that $[X,X]=A,$ $P\as$ and
$$
X=X_0+B+M^P,\quad P\as,
$$
where $B$ is a bounded variation process and $M^P$ is a local
$P$-martingale. Let $\gamma$ be an adapted process such that
$\Ii
\gamma_t\cdot A(dt)\gamma_t<\infty,$ $P\as$ We define
$$
Z_t=\exp\left(\II0t\gamma_s\,dM^P_s-\frac12 \II0t \gamma_s\cdot
A(ds)\gamma_s\right),\quad 0\le t\le 1
$$
and for all $k\ge1,$
$$
    \sigma^k:=\inf\left\{t\in[0,1]; \II0t \gamma_s\cdot
    A(ds)\gamma_s\ge k \right\}\in[0,1]\cup \{\infty\},
$$
with the convention $\inf\emptyset=\infty.$
\\
We use the standard notation $Y^\tau_t=Y_{\tau\wedge t}$ for the
process $Y$ stopped at a
stopping time $\tau.$ For all $k,$
$P^k:={X^{\sigma_k}}_\#P$ is the push-forward of $P$ with
respect
to the stopping procedure $X^{\sigma_k}.$ Note that $P^k$ and
$P$
match on the $\sigma$-field which is generated by
$\XXX0{\sigma_k}.$

\begin{lemma}\label{res-05}
Let  $P$ and $\gamma$ as above. Then, for all $k\ge1,$
$Z^{\sigma_k}$ is a genuine $P$-martingale and the measure
$$
    Q^k:=Z^{\sigma_k}_1 P^k
$$
is a probability measure on $\Omega$ which satisfies $Q^k\in\MP(\widehat{B}^{\sigma_k},A^{\sigma_k})$
where  $\widehat{B}^{\sigma_k}_t=\II0{t\wedge\sigma_k}
A(ds)\gamma_s$ and $M^k$ is a local $Q^k$-martingale.
\end{lemma}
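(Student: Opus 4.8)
The plan is to establish the three assertions in turn: that $Z^{\sigma_k}$ is a \emph{genuine} $P$-martingale, that $Q^k$ is a probability measure, and that $Q^k$ solves the stated martingale problem. The whole mechanism rests on the fact that stopping at $\sigma_k$ bounds the energy of $\gamma\cdot M^P$, which keeps the true-martingale property within reach of the supermartingale Lemma~\ref{res-03} alone, with no recourse to a Novikov- or Kazamaki-type criterion; after that, the martingale-problem part is a Girsanov-type computation performed by hand.

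First I would note that since $A$ carries no atoms, the nonnegative measure $s\mapsto\gamma_s\cdot A(ds)\gamma_s$ is atomless, hence $t\mapsto\II0t\gamma_s\cdot A(ds)\gamma_s$ is continuous, so $\sigma_k$ is a stopping time and $\II0{t\wedge\sigma_k}\gamma_s\cdot A(ds)\gamma_s\le k$ for every $t$. Therefore $\gamma\1_{\rrbracket0,\sigma_k\rrbracket}$ is an adapted integrand with $E_P\II0{\sigma_k}\gamma_s\cdot A(ds)\gamma_s\le k<\infty$, so $N:=(\gamma\1_{\rrbracket0,\sigma_k\rrbracket})\cdot M^P$ is a local $P$-martingale with $[N,N]_1\le k$, and $Z^{\sigma_k}=\mathcal E(N)$. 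Using the pointwise identity $\mathcal E(N)_t^2=\mathcal E(2N)_t\exp([N,N]_t)\le e^k\,\mathcal E(2N)_t$ and the fact (Lemma~\ref{res-03}) that $\mathcal E(2N)$ is a nonnegative supermartingale, optional stopping at $1\wedge\rho_j$ — where $\seq\rho j$ localizes the local martingale $Z^{\sigma_k}$ — gives $\sup_jE_P(Z^{\sigma_k}_{1\wedge\rho_j})^2\le e^k<\infty$; hence $\{Z^{\sigma_k}_{1\wedge\rho_j}\}_j$ is uniformly integrable, and since $Z^{\sigma_k}$ is continuous with $\rho_j\to\infty$, letting $j\to\infty$ in $E_PZ^{\sigma_k}_{1\wedge\rho_j}=1$ yields $E_PZ^{\sigma_k}_1=1$. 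A nonnegative supermartingale with $E_PZ^{\sigma_k}_0=E_PZ^{\sigma_k}_1=1$ has constant expectation on $\ii$, so $Z^{\sigma_k}$ is a true $P$-martingale. The probability claim is then immediate: $Z^{\sigma_k}_1=Z_{1\wedge\sigma_k}$ is nonnegative and, being the terminal value of an adapted process stopped at $\sigma_k$, measurable with respect to $\sigma(\XXX0{\sigma_k})$; as $P$ and $P^k$ agree on $\sigma(\XXX0{\sigma_k})$, $Q^k(\OO)=\int Z^{\sigma_k}_1\,dP^k=\int Z^{\sigma_k}_1\,dP=E_PZ^{\sigma_k}_1=1$. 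Moreover $Q^k\ll P^k$ and $Q^k={X^{\sigma_k}}_\#(Z^{\sigma_k}_1P)$, so any statement about the path up to $\sigma_k$ that holds $P\as$ holds $Q^k\as$ too.

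For the martingale problem, I would work with the candidate martingale part $M^k:=(M^P)^{\sigma_k}-\widehat B^{\sigma_k}$, where $\widehat B^{\sigma_k}_t=\II0{t\wedge\sigma_k}A(ds)\gamma_s$ is, by Cauchy--Schwarz (as in Remark~(b) following Theorem~\ref{res-01}) together with $\II0{\sigma_k}\gamma_s\cdot A(ds)\gamma_s\le k$, a well-defined continuous adapted process of bounded variation $P\as$ Then I would show $M^kZ^{\sigma_k}$ is a local $P$-martingale: from $dZ^{\sigma_k}_t=Z^{\sigma_k}_t\1_{\{t\le\sigma_k\}}\gamma_t\,dM^P_t$ and the identity $d[M^P,\gamma\cdot M^P]_t=A(dt)\gamma_t$ (a consequence of $[M^P,M^P]=[X,X]=A$), Itô's product rule gives, after the two $A(dt)\gamma_t$ contributions cancel, $d(M^k_tZ^{\sigma_k}_t)=M^k_t\,dZ^{\sigma_k}_t+Z^{\sigma_k}_t\,d(M^P)^{\sigma_k}_t$, a sum of stochastic integrals against local $P$-martingales. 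Since $M^k$, $Z^{\sigma_k}$ and the localizing times involved are functionals of $\XXX0{\sigma_k}$, on which $P$ and $P^k$ coincide, $M^kZ^{\sigma_k}$ is equally a local $P^k$-martingale; as $Z^{\sigma_k}$ is by the previous step a genuine $P^k$-martingale whose terminal value is $dQ^k/dP^k$, the elementary Bayes rule gives that $M^k$ is a local $Q^k$-martingale. Finally $[M^k,M^k]=[(M^P)^{\sigma_k},(M^P)^{\sigma_k}]=A^{\sigma_k}$, $Q^k\as$, because $\widehat B^{\sigma_k}$ has bounded variation and quadratic variation is a pathwise, measure-independent object; combined with $X=X_0+B^{\sigma_k}+\widehat B^{\sigma_k}+M^k$, $Q^k\as$, this is exactly $Q^k\in\MP(B^{\sigma_k}+\widehat B^{\sigma_k},A^{\sigma_k})$.

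The one genuinely delicate step is promoting $\mathcal E(N)$ from a supermartingale to a true martingale, i.e.\ proving $E_PZ^{\sigma_k}_1=1$; the rest is routine bookkeeping with the pushforward $P^k$ and a standard Itô computation. It is precisely the stopping at $\sigma_k$, making $[N,N]_1$ bounded, that turns this obstacle into an $L^2$-bound deduced from Lemma~\ref{res-03} only — in line with the finite-entropy strategy of the paper.
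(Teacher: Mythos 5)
Your proof is correct, and on the first part (promoting the exponential supermartingale to a true martingale) it follows essentially the same route as the paper: you exploit the bound $[N,N]\le k$ to obtain an $L^p$ bound from $\mathcal{E}(N)^p\le e^{kp^2/2}\mathcal{E}(pN)$ — you specialize to $p=2$ where the paper keeps $p$ general, and you close with ``a supermartingale of constant mean is a martingale'' where the paper phrases the same fact as a contradiction; these are cosmetic differences. Both avoid Novikov/Kazamaki, as intended.

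On the martingale-problem part your organization is genuinely different from the paper's, and arguably cleaner. The paper tests against $F_t=\xi\cdot X^\tau_t$ for a carefully chosen localizing stopping time $\tau\le\tau_n=\min(\tau^F_n,\tau^B_n,\tau^M_n,\tau^\gamma_n)$, applies It\^o's product rule to $Z_\tau F_\tau$, and after two uses of the martingale property identifies $E_{Q^k}[\xi\cdot(X_\tau-X_0)]$ with the drift integral; the conclusion then requires a passage $n\to\infty$ plus the remark that $Y=Y^{\sigma_k}$ $Q^k\as$ You instead write down the candidate $M^k=(M^P)^{\sigma_k}-\widehat B^{\sigma_k}$ explicitly, run the same It\^o product computation once to see that $M^kZ^{\sigma_k}$ is a local $P$-martingale (the two $A(dt)\gamma_t$ contributions cancelling is exactly the paper's cancellation in disguise), and then hand the conclusion to the abstract Bayes rule for a change of measure with strictly positive density. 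This buys a more modular argument — the localization choices are hidden inside standard facts rather than spelled out — at the cost of invoking the Bayes correspondence for local martingales as a black box. One small remark: the lemma's displayed conclusion reads $Q^k\in\MP(\widehat B^{\sigma_k},A^{\sigma_k})$ but the decomposition actually established (in the paper's proof, at \eqref{eq-12}, and in yours) is $Q^k\in\MP(B^{\sigma_k}+\widehat B^{\sigma_k},A^{\sigma_k})$; you have stated it correctly.
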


\begin{proof}
Let us first show that $Z^{\sigma_k}$ is a
$P^k$-martingale\footnote{It is a direct consequence of
Novikov's
criterion, but we prefer presenting an elementary proof  which
will be a guideline for a similar result with jump processes.}.
The local martingale  $Z^{\sigma_k}$ is of the form
$Z^{\sigma_k}=\mathcal{E}(N):=\exp(N-\frac12 [N,N])$ with $N$ a
local $P^k$-martingale such that $[N,N]\le k,$ $P^k\as$ For all
$p\ge0,$ since $\mathcal{E}(N)^p=\exp(pN-\frac{p}2[N,N])$ and
$\mathcal{E}(pN)=\exp(pN-\frac{p^2}2[N,N])\ge
e^{pN}e^{-kp^2/2},$
we obtain
$$
    \mathcal{E}(N)^p\le e^{pN}\le e^{kp^2/2}\mathcal{E}(pN).
$$
As a nonnegative local martingale, $\mathcal{E}(pN)$ is a
supermartingale. We deduce from this that
$E_{P^k}\mathcal{E}(pN)\le 1$ and
$$
E_{P^k}\mathcal{E}(N)^p\le e^{kp^2/2}E_{P^k}\mathcal{E}(pN)\le
e^{kp^2/2}<\infty.
$$
Choosing  $p>1,$ it follows that $\mathcal{E}(N)$ is uniformly
integrable. In particular, this implies that
$$
    E_{P^k}\mathcal{E}(N)_1=E_{P^k}\mathcal{E}(N)_0=1
$$
and proves that $Q^k$ is a probability measure.
\\
Suppose now that the supermartingale $\mathcal{E}(N)$ is not a
martingale. This implies that there exists $0\le t< 1$ such that
on a subset with
positive measure,
$E_{P^k}(\mathcal{E}(N)_1\mid\XXX0t)<\mathcal{E}(N)_t.$
Integrating, we
get
$1=E_{P^k}\mathcal{E}(N)_1<E_{P^k}\mathcal{E}(N)_t,$ which
contradicts $E_{P^k}\mathcal{E}(N)_s\le
E_{P^k}\mathcal{E}(N)_0=1, \forall s$: a
consequence of the supermartingale property of
$\mathcal{E}(N)$. Therefore, $\mathcal{E}(N)$ is a genuine
$P^k$-martingale.

Let us fix $k\ge1$ and show that $Q^k$ is a solution to $\MP(\widehat{B}^{\sigma_k},A^{\sigma_k})$. First of all, as it is assumed
that
$[X,X]=A,$ $P\as,$ we obtain $[X,X]=A^{\sigma_k},$ $P^k\as$ With
$Q^k\ll P^k,$ this
implies that $[X,X]=A^{\sigma_k},$ $Q^k\as$
\\
Now, we check 
\begin{equation}\label{eq-12}
    X=X_0+B^{\sigma_k}+\widehat{B}^{\sigma_k}+M^k
\end{equation}
where $M^k$ is a $Q^k$-martingale.
Let $\tau$ be a stopping time and
denote $F_t=\xi\cdot X^\tau_t$ with $\xi\in\Rd.$ The martingale
$Z^{\sigma_k}$ is the stochastic exponential $\mathcal{E}(N)$
of $N_t=\II0t
\1_{[0,\sigma_k]}(s)\gamma_s\cdot dM^P_s.$ Hence,
denoting $Z=Z^{\sigma_k},$ we have
    $dZ_t=Z_t \1_{[0,\sigma_k]}(t)\gamma_t\cdot dM^P_t,$
    $dF_t=\1_{[0,\tau]}(t)\xi\cdot(dB_t+dM^P_t)$ and
$d[Z,F]_t=Z_t\1_{[0,\tau\wedge\sigma_k]}(t)\xi\cdot
A(dt)\gamma_t,$ $P^k\as$
Consequently,
\begin{eqnarray*}
  E_{Q^k}[\xi\cdot(X_\tau-X_0)]
  &\overset{(a)}=& E_{P^k}[Z_\tau F_\tau -Z_0F_0] \\
&\overset{(b)}=& E_{P^k}\left[\II0\tau (F_t\,dZ_t +
Z_tdF_t+d[Z,F]_t)\right] \\
&=& E_{P^k}\left[\II0\tau F_t\,dZ_t + \II0\tau
Z_t\xi\cdot(dB_t+dM^P_t)
  +\II0\tau Z_t\xi\cdot A(dt)\gamma_t\right] \\
&\overset{(c)}=& E_{P^k}\left[\II0\tau Z_t\xi\cdot dB_t+\II0\tau Z_t\xi\cdot
  A(dt)\gamma_t\right]\\
  &\overset{(d)}=& E_{Q^k}\left[ \xi\cdot \II0\tau (dB_t+
  A(dt)\gamma_t)\right].
\end{eqnarray*}
In order that all the above terms are meaningful, we choose
$\tau$
such that it localizes $F,$ $B,$ $M^P$ and $\xi\cdot A\gamma.$
This is possible, taking for any $n\ge1,$
$\tau\le\tau_n=\min(\tau^F_n,\tau^B_n,\tau^M_n,\tau^\gamma_n)$
where
    $\tau^F_n=\inf\{t\in[0,1]; |X_t|\ge n\},$
    $\tau^B_n=\inf\{t\in[0,1]; \II0t |dB_s|\ge n\},$
$\tau^\gamma_n=\inf\{t\in[0,1];\II0t \gamma_s\cdot
A(ds)\gamma_s\ge n\},$
and $\tau^M_n$ is a localizing sequence of the local martingale
$M^P.$ We have
\begin{equation}\label{eq-13}
     \Lim n \tau_n=\infty,\quad P^k\as
\end{equation}
We used the definition of $Q^k$ and the martingale property of
$Z$
at (a) and (d), (b) is It\^o's formula and (c) relies on the
martingale property of $Z$ and $(M^P)^\tau$. Finally, taking
$\tau=\varsigma\wedge\tau_n,$ we see that for any stopping time
$\varsigma,$ any $n\ge1$ and any $\xi\in\Rd$
$$
    E_{Q^k}[\xi\cdot(X_\varsigma^{\tau_n}-X_0^{\tau_n})]
    = E_{Q^k}\left[ \xi\cdot \II0{\varsigma\wedge\tau_n} (dB_t+
  A(dt)\gamma_t)\right].
$$
Taking \eqref{eq-13} into account, this means that
$X-X_0-B-\widehat{B}$ is a local $Q^k$-martingale. We conclude
remarking that for any process $Y,$ we have $Y=Y^{\sigma^k},$
$Q^k\as$ This leads us to \eqref{eq-12}.
\end{proof}

Let us denote $P^\tau={X^\tau}_\#P$ the law under $P$ of the
process $X^\tau$ which is stopped at the stopping time $\tau$.

\begin{lemma}\label{res-06}
If $R$ fulfills the condition (U), then for any stopping time
$\tau,$ $R^\tau$ also fulfills it.
\end{lemma}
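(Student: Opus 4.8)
The plan is to reduce the uniqueness property (U) of $R^\tau$ to that of $R$ by disintegrating $R$ at time $\tau$ and then pasting. Recall that $R^\tau={X^\tau}_\#R$ solves the stopped martingale problem $\MP((B^R)^\tau,A^\tau)$ and that $(R^\tau)_0=R_0$. Let $R'$ be any probability measure on $\Omega$ with $R'_0=R_0$, $R'\ll R^\tau$ and $R'\in\MP((B^R)^\tau,A^\tau)$; we must prove $R'=R^\tau$. In the $R'$-decomposition $X=X_0+(B^R)^\tau+M'$ both the drift and the quadratic variation $[M',M']=A^\tau$ are frozen after $\tau$, from which one sees that $R'$ is carried by the set of paths that are constant after $\tau$ and that $M'$ coincides $R'$-a.s.\ with the $\tau$-stopped process $(X-X_0-B^R)^\tau$.

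Since $\Omega=C(\ii,\Rd)$ is Polish, $R$ admits a regular conditional distribution $(\bar R_\eta)_{\eta\in\Omega}$ given $\mathcal F_\tau:=\sigma(X^\tau)$, which we choose so that each $\bar R_\eta$ is carried by $\{X^\tau=\eta\}$ and $R=\int_\Omega\bar R_\eta\,R^\tau(d\eta)$. I then form the pasted measure
$$
\tilde R:=\int_\Omega\bar R_\eta\,R'(d\eta),
$$
and claim it fulfils the three hypotheses of condition (U) for $R$. First, $\tilde R\ll R$: if $R(C)=0$ then $\bar R_\eta(C)=0$ for $R^\tau$-a.e.\ $\eta$, hence for $R'$-a.e.\ $\eta$ since $R'\ll R^\tau$, so $\tilde R(C)=0$. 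Second, ${X^\tau}_\#\tilde R=R'$, because $\bar R_\eta$ is carried by $\{X^\tau=\eta\}$ for $R'$-a.e.\ $\eta$; in particular $\tilde R_0=R'_0=R_0$. Third — the substantial point — $\tilde R\in\MP(B^R,A)$. Granting these, (U) for $R$ yields $\tilde R=R$, whence $R'={X^\tau}_\#\tilde R={X^\tau}_\#R=R^\tau$, which is the claim.

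It remains to show $\tilde R\in\MP(B^R,A)$, and this is where the real work (and the main obstacle) lies. Writing $M:=X-X_0-B^R$ and decomposing $M=M^\tau+(M-M^\tau)$, one treats the two pieces separately. The stopped part $M^\tau$ depends only on the $\tau$-stopped path, whose law under $\tilde R$ is $R'$ by the second point above; since under $R'$ one has $M^\tau=M'$, a local $R'$-martingale with $[M^\tau,M^\tau]=A^\tau$, the same holds under $\tilde R$. The increment $M-M^\tau$, which restarts from $0$ at time $\tau$, is governed conditionally on $\mathcal F_\tau$ by the kernel $\bar R_\eta$; and because $R\in\MP(B^R,A)$, the regular conditional distributions $\bar R_\eta$ solve, for $R^\tau$-a.e.\ (hence $R'$-a.e.)\ $\eta$, the martingale problem restarted at $\tau$, so that $M-M^\tau$ is a local $\tilde R$-martingale with quadratic variation $A-A^\tau$. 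Splicing the two pieces at $\tau$, and using that $M^\tau$ and $M-M^\tau$ are orthogonal (so their quadratic variations add), one obtains that $M$ is a local $\tilde R$-martingale with $[M,M]=A^\tau+(A-A^\tau)=A$, i.e.\ $\tilde R\in\MP(B^R,A)$. The delicate ingredients here are purely measure-theoretic: the existence and the stated version of the regular conditional distribution, the stability of $\tau$ under path-stopping, and the precise formulation of the restarted martingale problem and of the splicing — all of which are classical facts from the theory of martingale problems.
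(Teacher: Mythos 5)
Your proposal is correct and follows essentially the same route as the paper: you construct the pasted measure $\tilde R=\int_\Omega\bar R_\eta\,R'(d\eta)$, which is exactly the paper's $R':=Q_{[0,\tau]}\otimes R(\cdot\mid\XXX0\tau)$ with the roles of the symbols $R'$ and $Q$ exchanged, verify that $\tilde R$ satisfies the three hypotheses of (U), and invoke uniqueness of $R$ to conclude $R'=R^\tau$. The only difference is presentational: the paper verifies $\tilde R\in\MP(B^R,A)$ by an explicit computation of $E_{R'}[\xi\cdot(\xtn_\sigma-\xtn_0)]$ split at $\tau$, whereas you argue the same split $M=M^\tau+(M-M^\tau)$ at a slightly higher level, appealing to the restarted martingale problem for the regular conditional distributions.
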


\begin{proof}
Let us fix the stopping time $\tau.$ Our assumption on $R$
implies
that
$$
X=X_0+B+M,\quad R^\tau\as
$$
where $M=M^R$ is a local $R$-martingale and we denote $B=B^R$.
Let $Q\ll R^\tau$ be given
such that $Q_0=R_0$ and
$$
X=X_0+B+M^Q,\quad Q\as
$$
where $M^Q$ is a local $Q$-martingale. We wish to show that
$Q=R^\tau.$
\\
The disintegration
$$
R=R_{[0,\tau]}\otimes R(\cdot\mid\XXX0\tau)
$$
means that for any bounded measurable function $F$ on $\OO,$
denoting $F=F(X)=F(\XXX0\tau, \xXX\tau1),$
$$
E_R(F)=\IO E_R[F(\eta,\xXX\tau1)\mid
\XXX0\tau=\eta]\,R_{[0,\tau]}(d\eta).
$$
Similarly, we introduce the probability measure
$$
    R':=Q_{[0,\tau]}\otimes R(\cdot\mid\XXX0\tau).
$$
To complete the proof, it is enough to show that $R'$ satisfies
\begin{equation}\label{eq-14}
    X=X_0+B+M',\quad R'\as
\end{equation}
with $M'$ a local $R'$-martingale. Indeed, the condition (U)
tells
us that $R'=R,$ which implies that $R^{\prime\tau}=R^\tau.$ But
$R^{\prime\tau}=Q,$ hence
$Q=R^\tau.$

Let us show \eqref{eq-14}. Let $\xi\in\Rd$ and a stopping time
$\sigma$ be given. We denote $\seq{\tau}{n}$ a localizing
sequence
of $M=M^R$ and $B=B^R.$ Then,
\begin{eqnarray*}
  && E_{R'}[\xi\cdot(\xtn_\sigma-\xtn_0)]\\
&=&
E_{R'}[\1_{\{\tau\le\sigma\}}\xi\cdot(\xtn_\sigma-\xtn_\tau)]
  +E_{Q}[\xi\cdot(\xtn_{\sigma}-\xtn_0)] \\
&=& \IO
E_R[\1_{\{\tau\le\sigma\}}\xi\cdot(\xtn_\sigma-\xtn_\tau)\mid
\XXX0\tau=\eta]\, Q(d\eta)
  + E_{Q}[\xi\cdot(\xtn_{\sigma}-\xtn_0)] \\
&=& \IO
E_R[\1_{\{\tau\le\sigma\}}\xi\cdot(B_{\sigma}^{\tau_n}-B^{\tau_n}_\tau)\mid
\XXX0\tau=\eta]\, Q(d\eta)
  + E_{Q}[\xi\cdot(B_{\sigma}^{\tau_n}-B^{\tau_n}_0)]\\
  &=& E_{R'}[\xi\cdot(B_{\sigma}^{\tau_n}-B^{\tau_n}_0)]
\end{eqnarray*}
This means that \eqref{eq-14} is satisfied (with the localizing
sequence $\seq\tau n$) and completes the proof of the lemma.
\end{proof}

For all $k\ge1,$ we consider the stopping time
$$
\tau_k=\inf\left\{t\in[0,1]; \II0t \beta_s\cdot A(ds)\beta_s\ge
k\right\}\in [0,1]\cup \{\infty\}
$$
where $\beta$ is the process which is associated with $P$ in
Theorem \ref{res-01} and as a convention $\inf\emptyset=\infty.$ We are going to use this stopping time $R\as$ Since $\beta$ is only defined $P\as,$ we assume for the moment that $P$ and $R$ are equivalent measures: $P\sim R.$

\begin{lemma}\label{res-07}
Assume that $P\sim R$ and
suppose that $R$ satisfies the condition \emph{(U)}. Then, for
all
$k\ge1,$ on the  stochastic interval $\llbracket
0,\tau_k\wedge1\rrbracket$ we have, $R$-almost everywhere
\begin{equation}\label{eq-15}
    \1_{\llbracket 0,\tau_k\wedge1\rrbracket}\frac{dP}{dR}
= \1_{\llbracket
0,\tau_k\wedge1\rrbracket}\frac{dP_0}{dR_0}(X_0)
\exp\left(\II0{\tau_k\wedge1}\beta_t\cdot
dM^R_t-\frac12\II0{\tau_k\wedge1}\beta_t\cdot A(dt)\beta_t
    \right).
\end{equation}
\end{lemma}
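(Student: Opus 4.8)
The plan is to identify the right-hand side of \eqref{eq-15} (stopped at $\tau_k$) as the density, with respect to the stopped reference law $R^{\tau_k}$, of a probability measure which we will force to coincide with $P^{\tau_k}$ by invoking the uniqueness condition (U). Concretely, set $\gamma=\beta$ and apply Lemma \ref{res-05} with $P$ playing the role of ``$P$'' there and with $M^R$ in place of $M^P$: since $P\in\MP(B^R,A)$ by Theorem \ref{res-01} — wait, more precisely $X=X_0+B^R+\widehat B+M^P$ under $P$, so to apply Lemma \ref{res-05} in the form stated I would take the semimartingale decomposition $X=X_0+B+M^P$ with $B=B^R$ (viewing $\widehat B$ as absorbed later) — one gets that $Z^{\sigma_k}_1$ is a genuine $P$-martingale evaluated at time $1$, where here $\sigma_k=\tau_k$ because $\int_0^t\beta_s\cdot A(ds)\beta_s$ is exactly the clock used to define both $\sigma_k$ and $\tau_k$. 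Then $Q^k:=Z^{\tau_k}_1\,P^{\tau_k}$ is a probability measure on $\OO$ and Lemma \ref{res-05} tells us $Q^k\in\MP((B^R)^{\tau_k}+\widehat B^{\tau_k},A^{\tau_k})$.

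Next I would massage the exponential so that the martingale in the stochastic exponential is taken with respect to $R$ rather than $P$. The point is that on $\llbracket 0,\tau_k\rrbracket$ one has $\int_0^{\cdot\wedge\tau_k}\beta_s\,dM^R_s=\int_0^{\cdot\wedge\tau_k}\beta_s\,dM^P_s+\int_0^{\cdot\wedge\tau_k}\beta_s\cdot A(ds)\beta_s$ because $M^R=M^P+\widehat B$ and $d\widehat B_s=A(ds)\beta_s$; substituting, the expression $\exp\big(\int_0^{\tau_k\wedge1}\beta_t\cdot dM^R_t-\tfrac12\int_0^{\tau_k\wedge1}\beta_t\cdot A(dt)\beta_t\big)$ equals $Z^{\tau_k}_1$ up to the factor $\exp\big(\tfrac12\int\beta\cdot A\beta\big)$, so I must be careful about which stochastic exponential is the genuine martingale. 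Actually the clean route is the reverse: start from $P$ and write $dP/dR$; since $P\ll R$, the density restricted to the stopped $\sigma$-field $\sigma(X_{[0,\tau_k]})$ is $E_R[dP/dR\mid X_{[0,\tau_k]}]$, and I want to show this conditional density, as a function on $\OO$, equals $\frac{dP_0}{dR_0}(X_0)\,\mathcal{E}\big(\int\beta\,dM^R\big)_{\tau_k}$. Equivalently, denoting the latter by $D_k$, I must show $D_k\,R=Q^k$ as measures; since both are probability measures (the total mass of $D_k\,R$ is $E_R D_k=1$ because $\mathcal{E}(\int\beta\,dM^R)^{\tau_k}$ is a genuine $R$-martingale — this again needs the bounded-quadratic-variation argument of Lemma \ref{res-05}, now under $R$, using $[{\int\beta\,dM^R}]^{\tau_k}\le k$), it suffices to show they solve the same martingale problem with the same initial law. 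The initial law of $D_k R$ is $\frac{dP_0}{dR_0}R_0=P_0$, matching $Q^k_0=P_0$. And a direct It\^o computation, parallel to the one in the proof of Lemma \ref{res-05} but carried out under $R$, shows $D_k R\in\MP((B^R)^{\tau_k}+\widehat B^{\tau_k},A^{\tau_k})$ as well.

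At this stage I would apply Lemma \ref{res-06}: since $R$ satisfies (U), so does $R^{\tau_k}$, with reference drift $(B^R)^{\tau_k}$ and quadratic variation $A^{\tau_k}$. Now $Q^k$ and $D_k R^{\tau_k}$ are two probability measures, both $\ll R^{\tau_k}$ (for $Q^k$ because $Q^k\ll P^{\tau_k}\ll R^{\tau_k}$ using $P\ll R$; for $D_k R^{\tau_k}$ because $D_k\ge 0$ and it has an $R^{\tau_k}$-density by construction), both with initial law $P_0$, and both solving $\MP((B^R)^{\tau_k}+\widehat B^{\tau_k},A^{\tau_k})$. However (U) as stated concerns $R'\in\MP(B^{R'},A)$ with the \emph{same} drift $B^R$, not the perturbed drift $(B^R)^{\tau_k}+\widehat B^{\tau_k}$; so I would instead compare $Q^k$ and $P^{\tau_k}$ directly as solutions of $\MP((B^R)^{\tau_k},A^{\tau_k})$? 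No — $Q^k$ does not solve that. The correct comparison is between $D_k R^{\tau_k}$ and $Q^k$: both solve the \emph{same} perturbed martingale problem $\MP((B^R)^{\tau_k}+\widehat B^{\tau_k},A^{\tau_k})$, they have the same initial law, and $D_k R^{\tau_k}\ll R^{\tau_k}$, $Q^k\ll R^{\tau_k}$; the uniqueness statement I actually need is uniqueness within $\MP((B^R)^{\tau_k}+\widehat B^{\tau_k},A^{\tau_k})$ among measures absolutely continuous w.r.t. a common one, which follows from (U) for $R^{\tau_k}$ by translating the reference drift (if $R^{\tau_k}\in\MP((B^R)^{\tau_k},A^{\tau_k})$ satisfies (U), and $\widehat B^{\tau_k}$ is a fixed adapted bounded-variation process, then the two perturbed solutions agree — this is a standard Girsanov transfer of the uniqueness property). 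Hence $Q^k=D_k R^{\tau_k}$, i.e. $Z^{\tau_k}_1 P^{\tau_k}=D_k R^{\tau_k}$; unwinding the push-forwards, this is precisely \eqref{eq-15} on $\llbracket 0,\tau_k\wedge1\rrbracket$. The main obstacle I anticipate is exactly this bookkeeping step: verifying that the genuine-martingale property transfers to whichever stochastic exponential is needed (under $R$ vs. under $P$) given only $[\,\cdot\,]^{\tau_k}\le k$, and correctly matching the perturbed martingale problems and the form of (U) after stopping, so that Lemma \ref{res-06} genuinely applies; the It\^o computations themselves are routine copies of those in Lemma \ref{res-05}.
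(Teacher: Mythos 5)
Your overall plan — build an explicit exponential density via Lemma \ref{res-05}, then force the identification with $P^{\tau_k}$ using the stopped uniqueness from Lemma \ref{res-06} — is the right one, and the writing-up of $\widetilde P^k:=\mathcal{E}(\beta\cdot M^R)^{\tau_k}_1\,R^{\tau_k}$ as a solution of $\MP\bigl((B^R+\widehat B)^{\tau_k},A^{\tau_k}\bigr)$ is exactly the paper's ingredient. But as written there are two genuine gaps.

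First, the bookkeeping objection you anticipate is not just bookkeeping: applying Lemma \ref{res-05} under $P$ with $\gamma=\beta$ does \emph{not} give $Q^k\in\MP((B^R)^{\tau_k}+\widehat B^{\tau_k},A^{\tau_k})$. Under $P$ the decomposition is $X=X_0+(B^R+\widehat B)+M^P$, so Lemma \ref{res-05} with $\gamma=\beta$ produces a drift $B^R+2\widehat B$, not $B^R+\widehat B$. (You cannot quietly take $B=B^R$, $M=M^P$, because $M^P$ together with $B^R$ does not decompose $X$ under $P$; absorbing $\widehat B$ into $B$ costs you the extra $\widehat B$.)

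Second, and more substantially: to conclude, you need uniqueness of solutions to the \emph{perturbed} problem $\MP((B^R+\widehat B)^{\tau_k},A^{\tau_k})$ among laws $\ll R^{\tau_k}$ with the given initial law, whereas Lemma \ref{res-06} only delivers (U) for $R^{\tau_k}$, i.e.\ uniqueness for the \emph{unperturbed} problem $\MP((B^R)^{\tau_k},A^{\tau_k})$. Your appeal to a ``standard Girsanov transfer of the uniqueness property'' is precisely the missing step, not a routine remark. The paper's proof is engineered to avoid that transfer: first reduce WLOG to $P_0=R_0$ (needed so (U), which fixes the initial law at $R_0$, is even applicable); then apply Lemma \ref{res-05} with $\gamma=-\beta$ under $P^{\tau_k}$ to get $Q^k:=\mathcal{E}(-\beta\cdot M^P)^{\tau_k}_1 P^{\tau_k}\in\MP(\1_{\llbracket 0,\tau_k\rrbracket}B^R,\1_{\llbracket 0,\tau_k\rrbracket}A)$, so that (U) for $R^{\tau_k}$ gives $Q^k=R^{\tau_k}$ directly; separately build $\widetilde P^k:=\mathcal{E}(\beta\cdot M^R)^{\tau_k}_1 R^{\tau_k}$ and push it back by $\gamma=-\beta$ to get $\widetilde Q^k\in\MP(\1_{\llbracket 0,\tau_k\rrbracket}B^R,\1_{\llbracket 0,\tau_k\rrbracket}A)$ as well, whence $\widetilde Q^k=R^{\tau_k}=Q^k$. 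Dividing out the strictly positive factor $\mathcal{E}(-\beta\cdot M^P)^{\tau_k}_1$ in $Q^k=\widetilde Q^k$ yields $P^{\tau_k}=\widetilde P^k$, which is \eqref{eq-15}. This round-trip with $\gamma=-\beta$ is what replaces your unproved ``transfer of uniqueness''; your sketch cannot close without an argument of exactly this kind.
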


\begin{proof}
By conditionning with respect to $X_0,$ we see that we can assume without loss of generality, that $R_0:=(X_0)\pf R=(X_0)\pf P=:P_0,$ i.e.\ $\frac{dP_0}{dR_0}(X_0)=1.$
Let  $k\ge1.$ Denote  $R^k=R^{\tau_k},$ $P^k=P^{\tau_k}.$
Applying Lemma \ref{res-05} with $\gamma=-\beta$ and  remarking
that $\widehat{B}_{-\beta}=-\widehat{B}_\beta,$ we see that
$$
 Q^k:=\mathcal{E}(-\beta\cdot M^P)_{\tau_k\wedge1}P^k
\in \MP(\1_{\llbracket 0,\tau_k\rrbracket}[(B^R+\widehat{B}_\beta)+\widehat{B}_{-\beta}],\1_{\llbracket 0,\tau_k\rrbracket}A))
=\MP(\1_{\llbracket 0,\tau_k\rrbracket}B^R,\1_{\llbracket 0,\tau_k\rrbracket}A). 
$$
But, it is known with
Lemma \ref{res-06} that $R^k$ satisfies the condition (U).
Therefore, 
\begin{equation}\label{eq-16}
    Q^k=R^k.
\end{equation}
Applying twice  Lemma \ref{res-05}, we observe on the one hand
that
\begin{equation}\label{eq-17}
    \widetilde{P}^k:=\mathcal{E}(\beta\cdot M^R)_{\tau_k\wedge1}R^k\in\MP(\1_{\llbracket 0,\tau_k\rrbracket}(B^R+\widehat{B}_\beta),\1_{\llbracket 0,\tau_k\rrbracket}A),
\end{equation}
and on the other hand that
$$
\widetilde{Q}^k:=\mathcal{E}(-\beta\cdot M^P)_{\tau_k\wedge1}\widetilde{P}^k
\in\MP(\1_{\llbracket 0,\tau_k\rrbracket}[(B^R+\widehat{B}_\beta)-\widehat{B}_\beta],\1_{\llbracket 0,\tau_k\rrbracket}A)=\MP(\1_{\llbracket 0,\tau_k\rrbracket}B^R,\1_{\llbracket 0,\tau_k\rrbracket}A).
$$
As for the proof of
\eqref{eq-16}, the condition (U) which is
satisfied by $R^k$ leads us to
    $
   \widetilde{ Q}^k=R^k.
    $
Therefore, we see with \eqref{eq-16} that $Q^k=\widetilde{Q}^k,$
i.e.\
$\mathcal{E}(-\beta\cdot
M^P)_{\tau_k\wedge1}P^k=\mathcal{E}(-\beta\cdot
M^P)_{\tau_k\wedge1}\widetilde{P}^k.$ And since
$\mathcal{E}(-\beta\cdot M^P)_{\tau_k\wedge1}>0,$  we obtain
$P^k=\widetilde{P}^k$
which is \eqref{eq-15}.
\end{proof}

We are  ready to complete the proof of Theorem \ref{res-04}.

\begin{proof}[Proof of Theorem \ref{res-04}. Derivation of $\dPR$]
Provided that $R$ satisfies the condition (U), when $P\sim R$ we obtain the
announced formula 
\begin{equation}\label{eq-50}
 \frac{dP}{dR}
=\frac{dP_0}{dR_0}(X_0)
\exp\left(\II0{1}\beta_t\cdot dM^R_t-\frac12\II0{1}\beta_t\cdot
A(dt)\beta_t \right),
\end{equation}
letting $k$ tend to infinity
in \eqref{eq-15}, remarking that $\tau:=\Lim k \tau_k=\inf\{t\in\ii;\II0t\beta_s\cdot A(ds)\beta_s=\infty\}$ and that \eqref{eq-32} implies
\begin{equation}\label{eq-21}
    \tau=\infty,\ P\as
\end{equation}  and, since $P\sim R,$ we also have  $\tau=\infty,$ $R\as$
Indeed, since $\tau(\omega)=\infty,$ there is some $k_o\ge1$ such that $\tau_{k_o}(\omega)=\infty$ and applying  Lemma \ref{res-07} with $k=k_o:$  $\dPR(\omega)=\frac{dP_0}{dR_0}(\omega_0)\exp\left(\II0{1}\beta_t\cdot dM^R_t-\frac12\II0{1}\beta_t\cdot A(dt)\beta_t   \right)(\omega)>0.$

Now, we consider the general case when $P$ might not be equivalent to $R.$
The main idea is to approximate $P$ by a sequence $\seq Pn$ such that $P_n\sim R$ for all $n\ge1,$ and to rely on our previous intermediate results. We consider
$$
	P_n:=\Big(1-\frac{1}{n}\Big)P+\frac{1}{n}R,\quad n\ge1.
$$
Clearly, $P_n\sim R$ and by convexity $H(P_n|R)\le (1-\frac{1}{n})H(P|R)+\frac{1}{n}H(R|R)\le H(P|R)<\infty.$ More precisely, the function $x\in\ii\mapsto H(xP+(1-x)R|R)\in[0,\infty]$ is a finitely valued convex continuous and increasing. It follows that
	$ %\begin{equation*}
\Lim n H(P_n|R)=H(P|R).
	$ %\end{equation*}
\\
It is clear that $\Lim n P_n=P$ in total variation norm.
Let us prove that the stronger convergence
\begin{equation}\label{eq-48}
\Lim n H(P|P_n)=0
\end{equation}
also holds. It is easy to check that $\1_{\{\dPR\ge1\}} dP/dP_n$ and $\1_{\{\dPR\le1\}} dP/dP_n$ are respectively decreasing and increasing sequences of functions. It follows by monotone convergence that
\begin{multline*}
\Lim n H(P|P_n)=\Lim n\int\log(dP/dP_n)\,dP\\
	=\Lim n \int_{\{\dPR\ge1\}} \log(dP/dP_n)\,dP 
		+\Lim n\int_{\{\dPR<1\}} \log(dP/dP_n)\,dP= 0.
\end{multline*}
By Theorem \ref{res-01}, there exist two vector fields $\beta^n$ and $\beta$ which are respectively defined $R\as$ and $P\as$ such that $E_{P_n}\II01 \beta^n_t\cdot A(dt)\beta^n_t<\infty$, $E_P\II01 \beta_t\cdot A(dt)\beta_t<\infty$ and 
$$
 dX_t = dB^R_t+A(dt)\beta^n_t+dM_t^{P_n},\ R\as;
\qquad
dX_t = dB^R_t+A(dt)\beta_t+dM_t^{P}, P\as
$$
where $M^{P_n}$ and $M^P$ are respectively a local $P_n$-martingale and a local $P$-martingale. Therefore, 
\begin{equation}\label{eq-49}
dM^{P_n}_t=dM^P_t+A(dt)(\beta_t-\beta^n_t),\quad P\as
\end{equation}
Extending $\beta$ arbitrarily by $\beta=0$ on the $P$-null set where it is unspecified, we know that $$
\exp \left(\II0t (\beta_s-\beta_s^n)\cdot dM^{P_n}_s-\frac{1}{2}\II0t (\beta_s-\beta_s^n)\cdot A(ds)(\beta_s-\beta_s^n)\right)$$
is a  $P^n$-supermartingale. It follows with Proposition \ref{res-02}, \eqref{eq-49} and a standard monotone convergence argument that 
\begin{multline*}
H(P|P_n)
\ge E_P\left(\II01 (\beta_s-\beta_s^n)\cdot dM^{P_n}_s-\frac{1}{2}\II01 (\beta_s-\beta_s^n)\cdot A(ds)(\beta_s-\beta_s^n)\right)\\
=\frac{1}{2} E_P\II01 (\beta_s-\beta_s^n)\cdot A(ds)(\beta_s-\beta_s^n).
\end{multline*}
With \eqref{eq-48}, this shows the key estimate 
\begin{equation}\label{eq-51}
\Lim n E_P\II01 (\beta_s-\beta_s^n)\cdot A(ds)(\beta_s-\beta_s^n)=0.
\end{equation}
Since $H(P_n|R)<\infty$ and $P_n\sim R,$ under the condition (U) we can invoke \eqref{eq-50} to write
\begin{equation*}
 \frac{dP_n}{dR}
=\frac{dP_{n,0}}{dR_0}(X_0)
\exp\left(\II0{1}\beta^n_t\cdot dM^R_t-\frac12\II0{1}\beta^n_t\cdot
A(dt)\beta^n_t \right).
\end{equation*}
As $\Lim nP_n=P$ in total variation norm, up to the extraction of a $R\as$-convergent subsequence we have $\Lim n dP_n/dR=dP/dR$ and $\Lim n dP_{n,0}/{dR}=dP_0/dR_0.$ On the other hand, \eqref{eq-51} implies that $P\as,$ $\Lim n \frac12\II0{1}\beta^n_t\cdot
A(dt)\beta^n_t =\frac12\II0{1}\beta_t\cdot
A(dt)\beta_t.$   It follows that 
\begin{equation*}
 \frac{dP}{dR}
=\1_{\{\dPR>0\}}\frac{dP_0}{dR_0}(X_0)
\exp\left(\II0{1}\beta_t\cdot dM^R_t-\frac12\II0{1}\beta_t\cdot
A(dt)\beta_t \right).
\end{equation*}
where \eqref{eq-51} also implies that the limit of the stochastic integrals
\begin{equation*}
\Lim n \II0{1}\beta^n_t\cdot dM^R_t=\II0{1}\beta_t\cdot dM^R_t,\ P\as
\end{equation*}
exists $P\as$
\end{proof} 

It remains to compute $H(P|R).$

\begin{proof}[End of the proof of Theorem \ref{res-04}. Computation of $H(P|R)$] 
Let us first compute $H(P|R)$ when $R$ satisfies (U). Remark that in
the
proof of Lemma \ref{res-07}, for all $k\ge1$ the local
$\widetilde{P}^k$-martingale $N^k=M^R-\widehat B$ which is behind
\eqref{eq-17}
is a genuine martingale. It is a consequence of the first
statement of Lemma \ref{res-05}. As $\widetilde{P}^k=P^k,$ $N^k$
is a genuine
$P^k$-martingale. This still holds when $P\sim R$ fails. Indeed, this hypothesis has only been invoked to insure that $\tau_k$ is well-defined $R\as$ But in the present situation, $\tau_k$ only needs to be defined $P\as$ With \eqref{eq-15}, we have
\begin{eqnarray*}
 H(P^k|R^k)
    &=&E_{P^k}\log\frac{dP^k}{dR^k} \\
&\overset{\eqref{eq-15}}=&
E_P\left(\log\frac{dP_0}{dR_0}(X_0)\right)
+E_{P^k}\left(\II0{1}\beta_t\cdot
dM^R_t-\frac12\II0{1}\beta_t\cdot A(dt)\beta_t
    \right) \\
  &\overset{\eqref{eq-17}}=& H(P_0|R_0)
+E_{P^k}\left(\II0{1}\beta_t\cdot
(dN^k_t+d\widehat{B}_t)-\frac12\II0{1}\beta_t\cdot A(dt)\beta_t
    \right) \\
    &\overset{\eqref{eq-18}}=& H(P_0|R_0)
  +\frac12 E_{P^k}\left(\II0{1}\beta_t\cdot A(dt)\beta_t
    \right)+E_{P^k}\left(\II0{1}\beta_t\cdot dN^k_t\right) \\
    &=& H(P_0|R_0)
+\frac12 E_{P}\left(\II0{\tau^k\wedge1}\beta_t\cdot
A(dt)\beta_t\right)\\
\end{eqnarray*}
where the last equality comes from the $P^k$-martingale property of $N^k$. It remains to let $k$ tend to infinity to see that
$$
    H(P|R)=H(P_0|R_0)
  +\frac12 E_{P}\left(\II0{1}\beta_t\cdot A(dt)\beta_t\right).
$$
Indeed, because of \eqref{eq-21} and since the sequence
$\seq\tau
k$ is increasing, we obtain by monotone convergence that
$$
    \Lim k
E_{P}\left(\II0{\tau^k\wedge1}\beta_t\cdot A(dt)\beta_t\right)
=\frac12 E_{P}\left(\II0{1}\beta_t\cdot A(dt)\beta_t\right).
$$
As regards the left hand side of the equality, with Proposition
\ref{res-02}-(1) and \eqref{eq-21}, we see that
\begin{eqnarray*}
  H(P|R)
  &=&\sup\{E_P u(X)-\log E_R e^{u(X)}; u\in L^\infty(P)\}\\
&=& \sup_k\sup\{E_P u(X^{\tau^k})-\log E_R e^{u(X^{\tau_k})};
u\in L^\infty(P)\} \\
  &=& \Lim k H(P^k|R^k).
\end{eqnarray*}

It remains to check that, without the condition (U), we have
\begin{equation}\label{eq-23}
    H(P|R)\ge H(P_0|R_0)+\frac12 E_{P}\left(\II0{1}\beta_t\cdot
A(dt)\beta_t\right).
\end{equation}
Let us extend $\beta$ by $\beta=0$ on the $P$-null set where it is unspecified and define
\begin{equation*}
  \tilde{u}(X)
:=\log\frac{dP_0}{dR_0}(X_0)+\II0{\tau^k\wedge1}\beta_t\cdot
dM^R_t
    -\frac12 \II0{\tau^k\wedge1}\beta_t\cdot A(dt)\beta_t .
\end{equation*}
Choosing $\tilde u(X)$ at inequality  $\overset{(\textrm{i})}\ge$ below, thanks to an already used supermartingale argument, we
obtain the inequality $\overset{(\textrm{ii})}\ge$  below and
\begin{eqnarray*}
   H(P^k|R^k)
&\overset{\eqref{eq-11}}=&\sup\left\{\int u\,dP^k-\log\int
e^u\,dR^k; u: \int e^u\,dR^k<\infty\right\} \\
&\overset{(\textrm{i})}\ge& \int \tilde{u}\,dP^k-\log\int
e^{\tilde{u}}\,dR^k \\
  &\overset{(\textrm{ii})}\ge&  \int \tilde{u}\,dP^k\\
&\overset{(\textrm{iii})}=&
H(P_0|R_0)+E_{P^k}\left(\II0{\tau^k\wedge1}\beta_t\cdot
d\widehat{B}_t
    -\frac12 \II0{\tau^k\wedge1}\beta_t\cdot
    A(dt)\beta_t\right)\\
&\overset{\eqref{eq-18}}=& H(P_0|R_0)+\frac12
E_{P^k}\II0{\tau^k\wedge1}\beta_t\cdot
  A(dt)\beta_t.
\end{eqnarray*}
Equality (iii) is a
consequence of
$$
\tilde{u}(X)=
\log\frac{dP_0}{dR_0}(X_0)+\II0{\tau^k\wedge1}\beta_t\cdot
  (dM^P_t+d\widehat{B}_t)
    -\frac12 \II0{\tau^k\wedge1}\beta_t\cdot A(dt)\beta_t, \quad P^k\as
$$
which comes from Theorem \ref{res-01}. It remains to let $k$
tend
to infinity, to obtain as above with \eqref{eq-21} that \eqref{eq-23} holds true.
This completes the proof of the theorem.
\end{proof}

\section{Proofs of Theorems \ref{res-01b} and \ref{res-04b}}
We begin recalling It\^o's formula. Let $P$ be the law of a
semimartingale
\begin{equation*}
    dX_t=b_t\rho(dt)+dM^P_t
\end{equation*}
with $M^P$ a local $P$-martingale such that $M^P=q\odot\mutk $, $P\as$ That is $P\in\LK(\Kb)$ for some
L\'evy kernel $\Kb.$  For any $f$ in $\mathcal{C}^2(\Rd)$ which
satisfies:
  \par\medskip
\noindent($*$)\quad \emph{When localizing with an increasing
sequence $\seq{\tau}{k}$ of stopping times tending $P$-almost surely to infinity, for each $k\ge1$ the truncated process $\1_{\{|q|>1\}}\1_{\{t\le \tau_k\}}[f(\lg
{X}t+q)-f(\lg{X}t)]$ is a
$\mathcal{H}_1(P,\Kb)$ integrand},
    \par\medskip\noindent
It\^o's formula is
\begin{eqnarray}\label{eq-27}
    df(X_t)
&=&\Big[\int_{\Rd_*}[f(\lg Xt+q)-f(\lg Xt)-\nabla
f(\lg Xt)\cdot q
    ]\,\KK_t(dq)\Big]\,\rho(dt)\nonumber\\
    &&\quad+\nabla f(\lg Xt)\cdot b_t\,\rho(dt) +dM_t,\quad P\as
\end{eqnarray}
where $M$ is a local $P$-martingale. This identity would fail if $\rho$ was not assumed to be atomless.

\subsection*{Proof of Theorem \ref{res-01b}}

Based on It\^o's formula, we start computing a large family of exponential local martingales. Recall that we denote
$$
	a\mapsto \theta(a):=e^a-a-1=\sum_{n\ge2}a^n/n!,
\quad a\in\RR.
$$

\begin{lemma}[Exponential martingale]\label{res-03c}
Let $h:\Omega\times\ii\times\Rd_*\to\RR$ be a real valued
predictable process which satisfies 
\begin{equation}\label{eq-28}
    E_R \int_{\ii\times\RR_*}\theta[h_t(q)]\,\Lb(dtdq)<\infty.
\end{equation}
Then, $h$ and $e^{h}-1$ belong to $\mathcal{H}_{1,2}(R,\Lb )$. In particular, $h\odot\mut$ is a $R$-martingale.
\\
Moreover,
\begin{equation*}
Z^h_t:=\exp\Big(h\odot\mut_t-\int_{(0,t]\times\Rd_*}\theta[h_s(q)]\,
    \Lb(dsdq)\Big),\quad t\in\ii
\end{equation*}
is  a local $R$-martingale and a positive $R$-supermartingale which satisfies
\begin{equation*}
dZ^h_t= Z^h_{t^-}\,[(e^{h(q)}-1)\odot d\mut _t].
\end{equation*}
\end{lemma}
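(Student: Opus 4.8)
The statement has three parts: (i) the integrability claim that $h$ and $e^h-1$ lie in $\mathcal{H}_{1,2}(R,\Lb)$, which in particular makes the compensated integral $h\odot\widetilde\mu^{\LL}$ a genuine $R$-martingale; (ii) that $Z^h$ is a local $R$-martingale; and (iii) the stochastic differential $dZ^h_t = Z^h_{t^-}[(e^{h(q)}-1)\odot d\widetilde\mu^{\LL}_t]$, from which the supermartingale property follows as in Lemma~\ref{res-03} (nonnegative local martingale $\Rightarrow$ supermartingale by Fatou along the localizing sequence). The natural order is to dispatch (i) first, then use It\^o's formula \eqref{eq-27} to get (iii), and read off (ii) and the supermartingale property as corollaries.

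\emph{Step 1: integrability.} Since $\theta(a)=e^a-a-1\ge0$ with $\theta(a)\sim a^2/2$ near $0$ and $\theta(a)\ge |a|$-type growth for large $|a|$, I would split the $q$-integral according to $\{|q|\le 1\}$ versus $\{|q|>1\}$ — or rather, more robustly, according to $\{|h_t(q)|\le 1\}$ versus $\{|h_t(q)|>1\}$. On the region where $|h|\le 1$ one has $|h|^2 \lesssim \theta(h)$ and also $|e^h-1|^2\lesssim \theta(h)$ (comparing power series); on the region where $|h|>1$ one has $|h|\lesssim \theta(h)$ and $|e^h-1|\le e^{|h|}\lesssim \theta(h)+1$, but since the measure $\Lb$ is only $\sigma$-finite I must be a little careful and instead note $|h|\le 2\theta(h)$ for $|h|\ge$ some constant, and on $\{|h|>1\}$, $e^h-1 \le e^h \le e\,\theta(h) + $ const is not integrable against a $\sigma$-finite measure — so the correct bound is $|e^h-1|\le e^{|h|}-1 = \theta(|h|)+|h| \le C\theta(h)$ for $|h|\ge 1$ using $\theta(|h|)\le\theta(h)+\theta(-h)$ and $|h|\le C\theta(\pm h)$. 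Assembling these pointwise inequalities and integrating against $R\otimes\Lb$, hypothesis \eqref{eq-28} gives $E_R\int \1_{\{|q|\le1\}}|h|^1 d\Lb<\infty$, $E_R\int\1_{\{|q|>1\}}|h|^2 d\Lb<\infty$ (in fact the same bound works for both exponents since we control $|h|\le C\theta$ everywhere $|h|\ge1$ and $|h|^2\le C\theta$ everywhere $|h|\le1$), and likewise for $e^h-1$; hence $h, e^h-1\in\mathcal{H}_{1,2}(R,\Lb)$. That $h\odot\widetilde\mu^{\LL}$ is then a true $R$-martingale is the standard $\mathcal{H}_{1,2}$ criterion for stochastic integrals against a compensated jump measure (one may cite the integrability hypotheses built into the definition of $\LK(\Lb)$ and \eqref{eq-24a}).

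\emph{Step 2: the stochastic differential.} Write $Y_t := h\odot\widetilde\mu^{\LL}_t - \int_{(0,t]\times\Rd_*}\theta(h_s(q))\,\Lb(dsdq)$, so $Z^h_t = \exp(Y_t)$, and apply It\^o's formula \eqref{eq-27} with $f=\exp$. The jump part of $Y$ has jumps $\Delta Y_t = h_t(\Delta X_t)$ at jump times of $X$, so the pure-jump contribution to $de^{Y_t}$ is $\int_{\Rd_*}[e^{Y_{t^-}+h_t(q)}-e^{Y_{t^-}}-e^{Y_{t^-}}h_t(q)]\,\LL_t(dq)\,\rho(dt) + dM_t$ where $M$ is a local martingale equal to $e^{Y_{t^-}}(e^{h(q)}-1)\odot d\widetilde\mu^{\LL}_t$; the drift contribution from the $-\int\theta(h)\,d\Lb$ part of $Y$ is $-e^{Y_{t^-}}\int_{\Rd_*}\theta(h_t(q))\,\LL_t(dq)\,\rho(dt)$. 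Now $e^{Y_{t^-}}=Z^h_{t^-}$, and the finite-variation (drift) terms cancel exactly because $\int[e^{h(q)}-1-h(q)]\,\LL_t(dq) = \int\theta(h_t(q))\,\LL_t(dq)$ by definition of $\theta$. What survives is precisely $dZ^h_t = Z^h_{t^-}\,[(e^{h(q)}-1)\odot d\widetilde\mu^{\LL}_t]$, which is (iii); being a stochastic integral against a compensated jump measure it is a local $R$-martingale, giving (ii); and being nonnegative it is an $R$-supermartingale by Fatou's lemma applied along a localizing sequence, exactly as in Lemma~\ref{res-03}.

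\emph{Main obstacle.} The delicate point is verifying the hypothesis ($*$) of It\^o's formula \eqref{eq-27} — i.e.\ that $\1_{\{|q|>1\}}\1_{\{t\le\tau_k\}}[f(\lg Xt+q)-f(\lg Xt)]$ is an $\mathcal{H}_1$ integrand for a suitable localizing sequence — since $f=\exp$ is unbounded and has unbounded derivatives, so one cannot just quote It\^o's formula for $\mathcal{C}^2_b$ functions. The fix is to localize: introduce stopping times $\tau_k$ that cap $|Y_{t^-}|$ (equivalently cap $Z^h_{t^-}$ from above and below, and cap the cumulative compensator $\int_{(0,t]}\theta(h)\,d\Lb$, which is finite $R$-a.s.\ by Step~1 and $\rho$ being atomless makes it continuous), on which $f$ and its relevant increments become bounded; then $(*)$ holds for the stopped process and It\^o applies on $\llbracket 0,\tau_k\rrbracket$, after which $k\to\infty$ recovers the identity $R$-a.s. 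This is the same localization pattern used for diffusions in Lemma~\ref{res-03}; I would present it concisely and refer to the $\mathcal{H}_{1,2}$ bounds from Step~1 to justify that the compensated integrals appearing are well-defined local martingales.
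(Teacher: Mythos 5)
Your proof follows essentially the same route as the paper: a short integrability argument based on the growth of $\theta$ (quadratic near $0$, linear near $-\infty$, exponential near $+\infty$) to place $h$ and $e^h-1$ in the right integrand classes and make $h\odot\mut$ a true martingale; then It\^o's formula \eqref{eq-27} applied to $f=\exp$ and $Y=h\odot\mut-\int\theta(h)\,d\Lb$, with the compensator drift cancelling the $\theta$-integral drift by the identity $e^a-1-a=\theta(a)$, yielding $dZ^h_t=Z^h_{t^-}[(e^h-1)\odot d\mut_t]$; and finally the localization on exit times of $Y$ from compacts to justify condition $(*)$, with Fatou giving the supermartingale property. The only place where your write-up is noticeably looser is the passage from the pointwise bounds on $\theta$ to the $\mathcal{H}_{1,2}$ membership (the bound $|h|\lesssim\theta(h)$ fails near $h=0$, so the $L^1$ estimate on $\{|q|\le1\}$ does not follow purely from those inequalities), but this matches the paper's own one-line treatment of that step.
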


\proof
The function $\theta$ is nonnegative, quadratic near zero, linear near $-\infty$ and it grows exponentially fast near $+\infty.$ Therefore,  \eqref{eq-28} implies that $h$ and $e^{h}-1$ belong to $\mathcal{H}_{1,2}(R,\Lb )$. In particular, $M^h:=h\odot\mut$ is a $R$-martingale.
\\
Let us denote $Y_t=M^h_t-\int_{(0,t]}\beta_s\,\rho(ds)$ where
$\beta_t=\int_{\Rd_*}\theta[h_t(q)]\,
\LL _t(dq).$ Remark that \eqref{eq-28} implies that these integrals are almost everywhere well-defined. Applying \eqref{eq-27} with $f(y)=e^y$ and
$dY_t=-\beta_t\,\rho(dt)+dM^h_t$,
we obtain
$$
    de^{Y_t}=e^{\lg
Yt}\Big[-\beta_t+\int_{\Rd_*}\theta[h_t(q)]\,\LL_t(dq)\Big]\,\rho(dt)+dM_t
    =dM_t
$$
where $M$ is a local martingale. We are allowed to do this
because
($*$) is satisfied. Indeed, with $f(y)=e^y,$ 
	$
	f(\lg Yt+h_t(q))-f(\lg Yt)- f'(\lg Yt) h_t(q)
	=e^{\lg Yt}\theta[h_t(q)]
	$ 
and if
$Y^\sigma_t:=Y_{t\wedge \sigma}$ is stopped at
$\sigma:=\inf\{t\in\ii;Y_t\not\in C\}\in\ii\cup\{\infty\}$ for some compact subset $C$ with
the
convention $\inf\emptyset=\infty,$ we see with \eqref{eq-28} and
the fact that any path
in $\OO$ is bounded, that $\exp(\lg
{Y^\sigma}t)\theta[h_t(q)]$ is in $\mathcal{H}_1(R,\Lb).$  Now, choosing
the compact set $C$ to be the ball of radius $k$ and letting $k$
tend to infinity, we
obtain an increasing sequence of stopping
times $\seq \sigma k$ which tends almost surely to infinity. This
proves that $Z^h:=e^Y$ is a local martingale.
\\
We see that 
	$
dM_t=e^{\lg Yt}\,d[(e^{h(q)-1})\odot\mut _t],
$
keeping track of the martingale terms in the above differential
formula:
\begin{eqnarray*}
  de^{Y_t}
  &=& e^{\lg Yt}\big[\theta(\Delta Y_t)+dY_t\big] \\
&=& e^{\lg Yt}\big[\theta[h_t(q)]\odot
d\mut _t+\Big(\int_{\Rd_*}\theta[h_t(q)]\,\LL_t(dq)\Big)\rho(dt)
-\beta_t\,\rho(dt)+h(q)\odot d\mut _t\big]
\\&=& e^{\lg
Yt}\big[\theta[h_t(q)]\odot
d\mut _t+h_t(q)\odot
  d\mut _t\big]\\
&=& e^{\lg Yt}\big[(e^{h_t(q)}-1)\odot
d\mut _t\big].
\end{eqnarray*}
By Fatou's lemma, any nonnegative local martingale is also a supermartingale.
\endproof

\begin{proof}[Proof of Theorem \ref{res-01b}]
It follows the same line as the proof of Theorem \ref{res-01}. 
By Lemma \ref{res-03c},
    $
0<E_RZ^h_1\le1$ for all $h$ satisfying the assumption \eqref{eq-28}. By \eqref{eq-11}, for any probability measure $P$
such that $H(P|R)<\infty,$ we have
\begin{equation*}%\label{eq-08}
E_P\left(h\odot\mut_1-\IiR \theta(h)\,d\Lb  \right)\le
H(P|R).
\end{equation*}
As in the proof of Theorem \ref{res-01}, see that
\begin{equation*}%\label{eq-06}
|E_P(h\odot\mut_1)|\le (H(P|R)+1) \|h\|_{\theta},\quad \forall
h\end{equation*}
where 
\begin{equation}\label{eq-30}
\|h\|_{\theta}:=\inf\left\{a>0;E_P\IiR \theta(h/a)\,d\Lb  \le1\right\}\in[0,\infty]
\end{equation}
is the Luxemburg norm of the  Orlicz space 
\begin{equation*}
 L_\theta:=\Big\{h:\iRO\to\RR;\textrm{measurable }
\textrm{s.t.}\ E_P \IiR
\theta(b_o|h|)\,d\Lb <\infty, \textrm{for some }
b_o>0\Big\}.
\end{equation*} 
It differs from the corresponding \emph{small}  Orlicz space
\begin{equation*}
S _\theta:=\Big\{h:\iRO\to\RR;
\textrm{measurable s.t.}\ E_P \IiR
\theta(b|h|)\,d\Lb <\infty,\forall b\ge0\Big\}
\end{equation*}
because the function $\theta(|a|)$ grows exponentially fast. 

We introduce the space $\mathcal{B}$ of all the bounded
processes such that $ E_P \IiR|h| d\Lb <\infty,$ and its subspace $\mathcal{H}\subset \mathcal{B}$ which consists of the processes in $\mathcal{B}$ which are predictable.
We have $\mathcal{B}\subset
S _\theta$ and any $h$ in $\mathcal{H}$ satisfies \eqref{eq-28}, which is the
hypothesis of Lemma \ref{res-03c}. Hence, \eqref{eq-30} holds
for all $h\in \mathcal{H}$ and, as $H(P|R)<\infty,$ it tells us that the linear
mapping $h\mapsto E_P(h\odot\mut_1)$ is continuous on
$\mathcal{H}$ equipped with the norm $\|\cdot\|_\theta.$ Since
the convex conjugate of the Young function $\theta(|a|)$ is
$\theta^*(|b|),$ the dual space of
$(S _\theta,\|\cdot\|_\theta)$\footnote{This doesn't
hold with $ L_\theta$ instead of
$S _\theta$.} (see \cite{RaoRen}), is isomorphic to
\begin{equation*}
 L_{\theta^*}:=\Big\{k:\iRO\to\RR;
\textrm{measurable s.t.}\ E_P \IiR
\theta^*(|k|)\,d\Lb <\infty\Big\}.
\end{equation*} 	
Therefore, there exists some $k\in  L_{\theta^*}$ such that 
\begin{equation}\label{eq-31}
	E_P h\odot\mut_1=E_P\IiR kh\,d\Lb ,
\quad\forall h\in \mathcal{H}.
\end{equation}
Let us introduce the predictable projection $k^\mathrm{pr}$ of $k$ which is defined by $k^\mathrm{pr}_t:=E_P(k\mid \XXx0t),$ $t\in\ii.$ 
As the space $\mathcal{B}$ is dense in
$S _\theta$\footnote{In general, it is not dense in
$ L_\theta.$}, $\mathcal{H}$ is dense in the subspace of all predictable processes in $S _\theta$ and it follows that any $g$  and $k$ in $ L_{\theta^*}$ which both satisfy \eqref{eq-31}, share the same predictable projection: $g^\mathrm{pr}=k^\mathrm{pr}.$ Consequently, there is
a \emph{unique} predictable process $k$ in the space
\begin{equation*}
\mathcal{K}(P):=\Big\{k:\iRO\to\RR;
\textrm{predictable s.t.}\
E_P \IiR \theta^*(|k|)\,d\Lb <\infty \Big\}
\end{equation*}
which verifies \eqref{eq-31}.
\\
As  $\mathcal{H}$ is included in $\mathcal{H}_1(P,\Lb),$ we have for all $h\in \mathcal{H},$ $h\odot\mut-h\odot k\Lb=h\odot(\mul-\Lb -h\odot k\Lb  = h\odot(\mul-\ell\Lb )$ with $\ell:=k+1.$ Consequently, \eqref{eq-31} is equivalent to
\begin{equation}\label{eq-44}
E_P \big[h\odot(\mul-\ell\Lb )]=0,\quad\forall h\in\mathcal{H},
\end{equation}
which is the content of the theorem. It remains however to note that, being an
expectation of the positive measure $\mul$, $\ell \Lb $ is also a positive measure.
Therefore, $\ell$ is nonnegative. This completes the proof of the theorem.
\end{proof}

\subsection*{Proof of Corollary \ref{res-08}}
It is mainly a remark based on H\"older's inequality in Orlicz spaces.

\begin{proof}[Proof of Corollary \ref{res-08}]
We are under the exponential integrability  assumption \eqref{eq-35} and we denote $Z=\dPR.$ The finite entropy assumption \eqref{eq-02} is equivalent to $Z$ belongs to the Orlicz space $ L_{\theta^*}(R),$ i.e.\ $\|Z\|_{\theta^*,R}<\infty.$ H\"older's inequality in Orlicz spaces\footnote{It is an easy consequence of Fenchel's inequality: $|ab|\le \theta(|a|)+\theta^*(|b|),$ for all $a,b\in\RR.$} expressed with the Luxemburg norms (see \eqref{eq-30}) gives us for any nonnnegative random variable $U$: 
$E_P(U)=E_R(ZU)\le 2\|Z\|_{\theta^*,R}\|U\|_{\theta,R}.$ This quantity is finite if $\|U\|_{\theta,R}<\infty,$ and this is equivalent to $E_R(e^{a_o U})<\infty$ for some $a_o>0.$ As a consequence, \eqref{eq-35} implies that $E_P\IiR \1_{\{|q|\ge1\}} e^{b_o|q|}\,\Lb(dtdq)<\infty$ for some $b_o.$ But this is equivalent to: $\1_{\{|q|\ge1\}}|q|$ belongs to the Orlicz space $ L_\theta(P\otimes\Lb ).$ With \eqref{eq-29} we see that $(\ell-1)$ is in $ L_{\theta^*}(P\otimes\Lb )$ and by H\"older's inequality again, we obtain
$$
	E_P  \IiR \1_{\{|q|\ge1\}}|q||\ell(t,q)-1|\,\Lb(dtdq)<\infty.
$$
The small jump part: $
	E_P  \IiR \1_{\{|q|<1\}}|q||\ell(t,q)-1|\,\Lb(dtdq)<\infty,
$
is a direct consequence of H\"older's inequality in $ L_2.$ This proves \eqref{eq-36}.
\\
We write symbolically
$$
	\mut=\mu-\Lb 
	=\mu-\ell \Lb +(\ell-1)\Lb
	=\widehat\mu +(\ell-1)\Lb .
$$
Hence, $q\odot\mut=q\odot\widehat\mu+\int (\ell-1)q\,d\Lb$ provided that all these terms are well defined. But, we have assumed that $q\odot\mut$ is well-defined and we have just proved that $\int (\ell-1)q\,d\Lb$ is well-defined. Therefore, the remaining term is also well-defined and the proof is complete.
\end{proof}

\subsection*{Proof of Theorem \ref{res-04b}}

It is similar to the proof of Theorem \ref{res-04}. We begin with a tranfer result in the spirit of Lemma \ref{res-05}. Let $P$ be a probability measure on $\OO$ such that 
$$
	P\in\MP(B,\Kb)
$$
where $B$ is a continuous bounded variation adapted process and $\Kb$ is some L\'evy kernel $$\Kb(dtdq):=\rho(dt) \KK(t;dq).$$ Let $\lambda$ be a $[-\infty,\infty)$-valued predictable process on $\iR$ such that $\int_{\{\lambda\ge -1\}} \theta(\lambda)\,d\Kb<\infty$ and $\Kb(-\infty\le \lambda<-1)<\infty,$ $P\as$ We define for all $t\in\ii,$
$$
Z_t=\widetilde{\exp} \left(\lambda\odot \mutk _t-\int_{[0,t]\times\Rd_*} \theta(\lambda)\,d\Kb\right):=Z^+_tZ^-_t
$$
with
\begin{equation*}
\left\{
\begin{array}{rcl}
Z^+_t&=&\displaystyle{\exp\left(\lp\odot \mutk_t -\int_{(0,t]\times\Rd_*}\theta(\lp) d\Kb\right)}\\
Z^-_t&=&\displaystyle{\1_{\{t< \tau^\lambda\}} \exp\left(\sum_{0\le s\le t}\lm(s,\Delta X_s)-\int_{(0,t]\times\Rd_*}(e^{\lm}-1)\,  d\Kb\right)}
\end{array}
\right.
\end{equation*}
where 
$$
\lp=\1_{\{\lambda\ge -\alpha\}}\lambda,\qquad\lm=\1_{\{-\infty\le \lambda< -\alpha\}}\lambda
$$
 with $\alpha>0,$  $e^{-\infty}=0$ and $\tau ^{\lambda}=\inf_{}\{t\in\ii,\lambda(t,\Delta X_t)=-\infty\}.$  Remark that, although $Z^+$ and $Z^-$ both depend on the choice of $\alpha,$ their product $Z=Z^+Z^-$ doesn't depend on $\alpha>0.$
For all $j,k\ge1,$ we define
$$
	\sigma^k_j:=\inf\left\{t\in\ii; \int_{[0,t]\times\Rd_*} \theta(\lp)\, d\Kb\ge k
	\textrm{ or }\lambda(t,\Delta X_t)\not\in[-j, k]\right\}\in\ii\cup\{\infty\}
$$
and $P^k_j:={X^{\sigma^k_j}}_\#P.$

\begin{lemma}\label{res-05b}
Let  $P$ and $\lambda$ be as above. Then, for all $j,k\ge1,$
$Z^{\sigma^k_j}$ is a genuine $P$-martingale and the measure
$$
    Q^k_j:=Z^{\sigma^k_j}_1 P^k
$$
is a probability measure on $\Omega$ which satisfies 
\begin{equation*}
Q^k_j\in\MP\Big(B^{\sigma^k_j}+\widehat B^{\sigma^k_j},
\1_{\llbracket 0,\sigma^k_j\rrbracket}e^{\lambda}\Kb\Big)
\end{equation*}
where 
\begin{equation}\label{eq-34}
\widehat B_t=\int_{[0,t]\times\Rd_*}\1_{\{|q|\le1\}} (e^{\lambda}-1) q\,d\Kb,\quad t\in\ii.
\end{equation}
\end{lemma}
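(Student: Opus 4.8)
The plan is to follow the proof of Lemma \ref{res-05} almost line by line, replacing the Brownian stochastic exponential by the jump exponential of Lemma \ref{res-03c} and the quadratic-bracket bound by the energy bound encoded in $\sigma^k_j$. The first step is to identify $Z=Z^+Z^-$ as the stochastic exponential of $(e^{\lambda}-1)\odot\mutk$, i.e.\ to check that $dZ_t=Z_{t^-}\,[(e^{\lambda}-1)\odot d\mutk_t]$ with the convention $e^{-\infty}=0$. For $Z^+$ one applies Lemma \ref{res-03c} to $h=\lp$, localized by $\sigma^k_j$: on $\llbracket 0,\sigma^k_j\rrbracket$ the bound $\int\theta(\lp)\,d\Kb\le k$ provides the integrability behind \eqref{eq-28}, so $Z^{+,\sigma^k_j}$ is a local $P$-martingale and $dZ^+_t=Z^+_{t^-}[(e^{\lp}-1)\odot d\mutk_t]$. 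For $Z^-$ one verifies by a direct computation that it is the purely discontinuous stochastic exponential $\mathcal{E}\big((e^{\lm}-1)\odot\mutk\big)$: the integrand $e^{\lm}-1$ takes values in $[-1,0]$ and is locally $\Kb$-integrable because $\lm$ is carried by $\{\lambda<-\alpha\}$, on which $\Kb$ is finite apart from the $\theta$-integrable slice $\{-1\le\lambda<-\alpha\}$; the product formula for such an exponential reproduces exactly the displayed formula for $Z^-$, the factor $\1_{\{t<\tau^{\lambda}\}}$ accounting for the killing $e^{-\infty}=0$. Since $\lp$ and $\lm$ have disjoint supports as functions of $q$, the local martingales $Z^+$ and $Z^-$ have no common jumps, so $[Z^+,Z^-]=0$ and integration by parts gives the desired identity, with $(e^{\lp}-1)+(e^{\lm}-1)=e^{\lambda}-1$.

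The second step, and the main obstacle, is to upgrade $Z^{\sigma^k_j}$ from a local martingale to a genuine $P$-martingale; as in Lemma \ref{res-05} this amounts to showing it is uniformly integrable, and then $Q^k_j=Z^{\sigma^k_j}_1P^k_j$ is a probability measure because a nonnegative supermartingale with $E_PZ^{\sigma^k_j}_1=1=E_PZ^{\sigma^k_j}_0$ is a martingale. Here I would exploit both indices of $\sigma^k_j$: on $\llbracket 0,\sigma^k_j\rrbracket$ one has $\int\theta(\lp)\,d\Kb\le k$ and $\lambda(t,\Delta X_t)\in[-j,k]$. For the factor $Z^{+,\sigma^k_j}$ one tries to adapt the $L^p$-bound of Lemma \ref{res-05}, comparing $(Z^{+})^p$ with the supermartingale $\exp\big(p\,\lp\odot\mutk-\int\theta(p\lp)\,d\Kb\big)$; the difficulty is that $\theta$ grows exponentially, not quadratically, so $\theta(p\lp)$ need not be $\Kb$-integrable even when $\theta(\lp)$ is, and one has to interpose a truncation $\lp\wedge c$ and let $c\uparrow\infty$ by monotone convergence. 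For $Z^{-,\sigma^k_j}$, whose jumps lie in $[e^{-j}-1,0]$ and whose paths are dominated by $\exp\big(\int(1-e^{\lm})\,d\Kb\big)$, a further localization of this (finite) drift makes it bounded, hence a martingale. Multiplying the two, $Z^{\sigma^k_j}$ has constant expectation $1$.

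The third step verifies that $Q^k_j$ solves the announced martingale problem, which is the jump transcription of the computation in Lemma \ref{res-05}. Fix $f\in\mathcal{C}^2_b(\Rd)$ and let $G^f$ be the local $P^k_j$-martingale attached to $f$ by the defining property of $P^k_j\in\MP(B^{\sigma^k_j},\1_{\llbracket 0,\sigma^k_j\rrbracket}\Kb)$; writing $\tilde X=X-B^{\sigma^k_j}$, its jumps are $f(\lg{\tilde X}t+\Delta X_t)-f(\lg{\tilde X}t)$ and its compensator is continuous. Writing $Q^k_j=Z^{\sigma^k_j}_1P^k_j$ and applying integration by parts together with It\^o's formula \eqref{eq-27} to the product $Z^{\sigma^k_j}G^f$, stopped along a localizing sequence that makes every term integrable (exactly as in Lemma \ref{res-05}), the only interaction term is $d[Z,G^f]_t=Z_{t^-}(e^{\lambda}-1)\,[f(\lg{\tilde X}t+q)-f(\lg{\tilde X}t)]\odot d\mul_t$, whose $P$-compensator is $Z_{t^-}\big(\int_{\Rd_*}(e^{\lambda}-1)[f(\lg{\tilde X}t+q)-f(\lg{\tilde X}t)]\,\KK(t;dq)\big)\rho(dt)$. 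Dividing through by $Z^{\sigma^k_j}$, which is legitimate since $Z^{\sigma^k_j}>0$ as in Lemma \ref{res-05}, shows that under $Q^k_j$ the $\Kb$-compensator of $\mul$ on $\llbracket 0,\sigma^k_j\rrbracket$ is replaced by $e^{\lambda}\Kb$; the first-order term $\nabla f(\lg{\tilde X}t)\cdot q\,\1_{\{|q|\le1\}}$ generated by this replacement is exactly cancelled by shifting the drift by $\widehat B$ of \eqref{eq-34}. Since every process is $Q^k_j$-a.s.\ constant after $\sigma^k_j$, passing to $\tilde X=X-B^{\sigma^k_j}-\widehat B^{\sigma^k_j}$ one reads off $Q^k_j\in\MP\big(B^{\sigma^k_j}+\widehat B^{\sigma^k_j},\1_{\llbracket 0,\sigma^k_j\rrbracket}e^{\lambda}\Kb\big)$, as claimed.

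Apart from the genuine-martingale step, the remaining care is pure bookkeeping: the split $\{|q|\le1\}$ versus $\{|q|>1\}$ in the definition of $\MP$ must be carried through the change of L\'evy kernel, so that the large-jump part of the compensator simply becomes $\1_{\{|q|>1\}}[f(\lg{\tilde X}t+q)-f(\lg{\tilde X}t)]\,e^{\lambda}\Kb$ with no Taylor correction while the small-jump part alone produces $\widehat B$; and the convention $e^{-\infty}=0$, together with the cut-off $\tau^{\lambda}$ built into $Z^-$, must be tracked so that $Z$ vanishes exactly where $\lambda(t,\Delta X_t)=-\infty$.
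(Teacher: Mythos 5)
Your architecture is the right one, and the third step (the integration-by-parts verification of the martingale problem) is a correct variant of what the paper does: the paper separates the L\'evy-kernel computation (testing against $F_t=\sum_{s\le t\wedge\tau}f(s,\Delta X_s)$) from the drift computation (testing against $G_t=\xi\cdot X^\triangle_{t\wedge\tau}$), while you propose to test against the defining $\mathcal{C}^2_b$ local martingales; both routes lead to the same cancellations. But the second step has a genuine gap, and it is exactly the step the double index of $\sigma^k_j$ was designed to unblock.

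You correctly record that on $\llbracket 0,\sigma^k_j\rrbracket$ one has $\int\theta(\lp)\,d\Kb\le k$ \emph{and} $\lambda(t,\Delta X_t)\in[-j,k]$, but you then set this second bound aside and declare a ``difficulty'' -- that $\theta(p\lp)$ need not be $\Kb$-integrable -- which you propose to cure by truncating $\lp\wedge c$ and ``letting $c\uparrow\infty$ by monotone convergence.'' This does not close the argument: the constant you obtain from the truncated comparison $(Z^{+,(c)})^p\le C(c,p)\,\mathcal{E}((e^{p(\lp\wedge c)}-1)\odot\mutk)$ blows up as $c\to\infty$, so no monotone passage produces a uniform $L^p$ bound. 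The point of the index $k$ in $\sigma^k_j$ is precisely to make truncation unnecessary: on the stopped interval the upper bound $\lambda\le k$ is already in force, and one uses the elementary inequality $\theta(pa)\le c(k,p)\,\theta(a)$ for all $a\le k$, which together with $\int\theta(\lp)\,d\Kb\le k$ gives the deterministic bound $\int\theta(p\lp)\,d\Kb\le c(k,p)k$ and hence $(Z^+)^p\le e^{c(k,p)k}\,\mathcal{E}((e^{p\lp}-1)\odot\mutk)$, from which uniform integrability follows as in Lemma \ref{res-05}. You have the ingredient; you simply never use it.

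Relatedly, you also miss the paper's shortcut for $Z^-$: since the decomposition $Z=Z^+Z^-$ is free in the threshold $\alpha$, one chooses $\alpha=j$. Then $\lambda\ge -j$ at jump times on $\llbracket 0,\sigma^k_j\rrbracket$ forces $\lm(t,\Delta X_t)=0$, and $Z^{\sigma^k_j}=(Z^{\sigma^k_j})^+$; there is no product to control and no separate argument for $Z^-$ is needed. Your treatment of $Z^-$ (``a further localization of this drift makes it bounded, hence a martingale'') is in any case not a proof -- localization gives local martingales, and boundedness of the drift alone does not yield the martingale property -- so the $\alpha=j$ choice is not merely cosmetic but what makes the step go through cleanly.
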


Note that $\widehat B_t$ might not be well defined in the general case. Only the stopped processes $\widehat B^{\sigma^k_j}$ are asserted to be meaningful.

\proof

Let us fix $j,k\ge1.$
We have  $Z^{\sigma^k_j}=\widetilde\exp(\lambda^k_j\odot \mutk  -\IiR \theta(\lambda^k_j)\,d\Kb)$  with $\lambda^k_j=\1_{\llbracket 0,\sigma^k_j\rrbracket}\lambda$ which is predictable since $\lambda$ is predictable and  $\1_{\llbracket 0,\sigma^k_j\rrbracket}$ is left continuous. We drop the subscripts and superscripts $j,k$ and write $\lambda=\lambda^k_j,$ $\lp=(\lambda^k_j)^+,$ $\lm=(\lambda^k_j)^-,$ $Z^{\sigma^k_j}=Z$ for the remainder of the proof. By the definition of $\sigma^k_j,$ we obtain with this simplified notation
\begin{equation}\label{eq-33}
	\IiR \theta(\lp)\,d\Kb \le k,\qquad -j\le\lambda\le k,\quad P^k_j\as
\end{equation}
Let us first prove that $Z$ is a $P^k_j$-martingale. Since it is a local martingale, it is enough to show that $$E_{P^k_j} Z^p_1<\infty,\quad \textrm{ for some } p>1.$$ 
\\
Choosing $\alpha=j$ in the definition of $(Z^{\sigma^k_j})^+$ and $(Z^{\sigma^k_j})^-,$ we see that $Z^{\sigma^k_j}=(Z^{\sigma^k_j})^+=Z^+=\mathcal{E}((e^{\lp}-1)\odot \mutk  ).$ 
For all $p\ge0,$ 
$$(Z^{+})^p=\exp\left(p\lp\odot \mutk  -p\IiR \theta(\lp)\,d\Kb \right)\le \exp(p\lp\odot \mutk  )$$ and
$$\mathcal{E}((e^{p\lp}-1)\odot \mutk  )=\exp\left(p\lp\odot \mutk  -\IiR \theta(p\lp)\,d\Kb \right)\ge 
e^{p\lp\odot \mutk  }/C(k,p)$$ for some finite deterministic constant $C(k,p)>0.$ To derive  $C(k,p),$ we must take account of \eqref{eq-33} and  rely upon the inequality $\theta(pa)\le c(k,p) \theta(a)$ which holds for all $a\in(-\infty,k]$ and some $0<c(k,p)<\infty.$ With this in hand, we obtain
$$
    (Z^+)^p\le e^{p\lp\odot \mutk  }\le C(k,p)\mathcal{E}((e^{p\lp}-1)\odot \mutk  ).
$$
We know with Lemma \ref{res-03c} that $\mathcal{E}((e^{p\lp}-1)\odot \mutk  )$
is a nonnegative local martingale. Therefore, it  is a
supermartingale. We deduce from this that
$E_{P^k_j}\mathcal{E}((e^{p\lp}-1)\odot \mutk  )\le 1$ and
$$
E_{P^k_j}(Z^+)^p\le C(k,p)E_{P^k_j}\mathcal{E}((e^{p\lp}-1)\odot \mutk  )\le C(k,p)<\infty.
$$
Choosing  $p>1,$ it follows that $\mathcal{E}((e^{\lp}-1)\odot \mutk  )$ is uniformly integrable. We conclude as in Lemma \ref{res-05}'s proof that $\mathcal{E}((e^{\lambda}-1)\odot \mutk  )$ is a genuine $P^k_j$-martingale.

Now, let us show that 
$$
Q^k_j\in\LK\Big(\1_{\llbracket 0,\sigma^k_j\rrbracket}e^{\lambda}\Kb\Big).
$$ 
Let $\tau$ be a finitely valued stopping time and $f$ a  measurable function on $\iR$ which will be specified later. We
denote $F_t=\sum_{0\le s\le t\wedge \tau}f(s,\Delta X_s)$ with the convention that $f(t,0)=0$ for all $t\in\ii.$ By  Lemma \ref{res-03c}, the martingale
$Z$  satisfies
    $dZ_t=\1_{\llbracket 0,\sigma^k_j\rrbracket}(t)\lg Zt[ (e^{\lambda}-1)\odot\mutk ]$. We have also 
    $dF_t=\1_{\llbracket 0,\tau\rrbracket}(t)f(t,\Delta X_t)$ and
$d[Z,F]_t=\1_{\llbracket 0,\sigma^k_j\wedge \tau\rrbracket}(t)\lg Zt (e^{\lambda(\Delta X_t)}-1)f(t,\Delta X_t),$ $P^k_j\as$
Consequently,
\begin{eqnarray*}
 && E_{Q^k_j}\sum_{0\le t\le  \tau}f(t,\Delta X_t)\\
  	&=& E_{P^k_j}(Z_{ \tau} F_{ \tau} -Z_0F_0) \\
	&=& E_{P^k_j}\II0{ \tau} (F_t\,dZ_t +Z_tdF_t+d[Z,F]_t) \\
	&=& E_{P^k_j}\left[\II0{ \tau} F_t\,dZ_t + 
	\sum_{0\le t\le  \tau}\lg Zt f(t,\Delta X_t)
			+\sum_{0\le t\le  \tau}\lg Zt (e^{\lambda(t,\Delta X_t)}-1)f(t,\Delta  X_t)\right] \\
 	&=& E_{P^k_j}\sum_{0\le t\le \tau}\lg Zt  e^{\lambda(t,\Delta X_t)}f(t,\Delta  X_t)\\
  &=& E_{P^k_j}\int_{[0,{ \tau}]\times\Rd_*} \lg Zt  f(t,q) e^{\lambda(t,q)}\,\Kb(dtdq)\\
	&=& E_{Q^k_j}\int_{[0,{ \tau}]\times\Rd_*}  f(t,q) e^{\lambda(t,q)}\,\Kb(dtdq).
\end{eqnarray*}
We are going to choose
$\tau$ such that the above terms are meaningful. For each $n\ge1,$ consider $\tau_n:=\inf_{}\{t\in\ii;\sum_{0\le s\le t\wedge \tau}|f(s,\Delta X_s)|\ge n\}$ and take $f$ in $L_1(P^k_j\otimes\Kb)$ to obtain $\Lim n \tau_n=\infty,$ $P^k_j\as$ and a fortiori $Q^k_j\as$ It remains to take $\tau=\sigma\wedge \tau_n$ with any stopping time $\sigma$ to see that the L\'evy kernel of $Q^k_j$ is $e^\lambda \Kb=e^{\lambda^k_j} \Kb.$

It remains to compute the drift term.
Let us denote $X^*_t:=\sum_{0\le s\le t}\1_{\{|\Delta X_s|>1\}}\Delta X_s$ the cumulated sum of large jumps of $X,$ and $X^{\triangle}:=X-X^*$ its complement.
Let $\tau$ be a finitely valued stopping time and
take $G_t=\xi\cdot X^\triangle_{t\wedge \tau}$ with $\xi\in\Rd.$ We have
    $dG_t=\1_{\llbracket 0,\tau\rrbracket}(t)\xi\cdot(dB_t+(\1_{\{|q|\le1\}} q)\odot d\mutk _t)$ and
$d[Z,G]_t=\1_{\llbracket 0,\sigma^k_j\wedge\tau\rrbracket}(t)\lg Zt (e^{\lambda(\Delta X_t)}-1)\1_{\{|\Delta X_t|\le1\}} \xi\cdot \Delta X_t,$ $P^k_j\as$
Therefore,
\begin{eqnarray*}
  &&E_{Q^k_j}[\xi\cdot(X^\triangle_\tau-X^\triangle_0)]\\
	&=& E_{P^k_j}[Z_\tau G_\tau -Z_0G_0] \\
	&=& E_{P^k_j}\left[\II0\tau (G_t\,dZ_t +Z_tdG_t+d[Z,G]_t)\right] \\
			&=& E_{P^k_j}\Big[\II0\tau G_t\,dZ_t + \II0\tau \lg Zt 		 \xi\cdot(dB_t+(\1_{\{|q|\le1\}}q)\odot d\mutk _t)\\
	&&\qquad\qquad\qquad\qquad	  +\sum_{0\le t\le \tau}\lg Zt \1_{\{|\Delta X_t|\le1\}} (e^{\lambda(t,\Delta X_t)}-1)\xi\cdot \Delta X_t\Big] \\
	&=& E_{P^k_j}\left[\II0\tau \lg Zt\xi\cdot dB_t+\sum_{0\le t\le \tau}\lg Zt   \1_{\{|\Delta X_t|\le1\}}(e^{\lambda(t,\Delta X_t)}-1)\xi\cdot \Delta X_t\right]\\
  &=&  E_{P^k_j}\left[\II0\tau \lg Zt\xi\cdot dB_t
	+\II0\tau \lg Zt  \Big\{\int_{\Rd_*} \1_{\{|q|\le1\}}(e^{\lambda(t,q)}-1)\xi\cdot q\,\KK_t(dq)\Big\}\,\rho(dt)\right]\\
&=&  E_{Q^k_j}  \II0\tau  \xi\cdot\left(dB_t+\Big\{\int_{\Rd_*} \1_{\{|q|\le1\}}(e^{\lambda(t,q)}-1) q\,\KK_t(dq)\Big\}\,\rho(dt)\right)
\end{eqnarray*}
where we take $\tau=\tau_n:=\inf_{}\{t\in\ii;|X_t|\ge n\}$ which tends to $\infty$ as $n$ tends to infinity. This shows that the drift term of $X$ under $Q^k_j$ is
$(B+\widehat B)^{\sigma_k}$ where $\widehat B$ is given at \eqref{eq-34} and the stopped process $\widehat B^{\sigma_k}$ is well-defined.
\endproof

As a first step, it is assumed that $P\sim R$ for the stopping times $\tau^k_j$, $\tau_j$ and $\tau^-$ to be  defined (below) $R\as$ and not only $P\as$

Following the proofs of Lemmas \ref{res-06} and \ref{res-07}, except for minor changes (but we skip the details), we arrive at analogous results:
\begin{enumerate}[(i)]
\item
If $R$ fulfills the uniqueness condition (U), then for any stopping time $\tau,$ $R^\tau$ also fulfills (U).
\item
If $P\sim R,$ then
for any $j,k\ge1,$ we have
\begin{equation*}
\1_{\llbracket 0,\tau^k_j\wedge1\rrbracket}\dPR
=\1_{\llbracket 0,\tau^k_j\wedge1\rrbracket}\frac{dP_0}{dR_0}(X_0)
\exp\left(
\big(\1_{(0,\tau^k_j\wedge1]}\log\ell\big)\odot \mut-\int_{(0,\tau^k_j\wedge1]\times\Rd_*}\theta(\log\ell)\,d\Lb\right)
\end{equation*}
where
$$
	\tau^k_j:=\inf\left\{t\in\ii; \int_{[0,t]\times\Rd_*} \1_{\{\ell>1/2\}}\theta(\log\ell)\, d\Lb\ge k
	\textrm{ or }\log\ell(t,\Delta X_t)\not\in[-j, k]\right\}\in\ii\cup\{\infty\}.
$$
\end{enumerate}
For the proof of (ii), we use Lemma \ref{res-03c} where $\lambda=\log\ell$ plays the same role as $\beta$ in Lemma \ref{res-07}, and we go backward with $-\lambda$ which corresponds to $\ell^{-1}$.

We fix $j,$ and let $k$ tend to infinity to obtain with \eqref{eq-29} that
$$
\Lim k \tau^k_j=\tau_j:=\inf\left\{t\in\ii; \ell(t,\Delta X_t)<e^{-j}\right\}\in\ii\cup\{\infty\}, \quad P\as
$$
and therefore $R\as$ also. More precisely, this increasing sequence is stationary after some time: there exists $K(\omega)<\infty$ such that $\tau^k_j(\omega)=\tau_j(\omega),$ for all $k\ge K(\omega).$ It follows that for all $j\ge1,$
\begin{multline}\label{eq-46}
\1_{\llbracket 0,\tau_j\wedge1\rrbracket}\dPR
=\1_{\llbracket 0,\tau_j\wedge1\rrbracket}\frac{dP_0}{dR_0}(X_0)
 \exp\left(
\big(\1_{(0,\tau_j\wedge1]}\log\ell\big)\odot \mut-\int_{(0,\tau_j\wedge1]\times\Rd_*}\theta(\log\ell)\,d\Lb\right).
\end{multline}

\begin{lemma}\label{res-09}
We do not assume that $P\sim R$ and we extend $\ell$ by $\ell=1$ on the $P$-negligible subset where it is unspecified. Defining $\tau^-:=\sup_{j\ge1} \tau_j,$ we have $P(\tau^-=\infty)=1.$
\end{lemma}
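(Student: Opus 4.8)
The plan is to combine the energy estimate \eqref{eq-29} with the fact, provided by Theorem \ref{res-01b}, that $\ell\Lb$ is the $P$-compensator of the jump measure $\mul=\mu^X$, and then to conclude by a Borel--Cantelli argument.

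First I would introduce the predictable set $\Gamma_j:=\{(\omega,t,q)\in\iRO;\ \ell_t(q)<e^{-j}\}$, which is predictable because $\ell$ is. With the convention $\inf\emptyset=\infty$, the event $\{\tau_j\le1\}$ is precisely the event that $(t,\Delta X_t)\in\Gamma_j$ for some jump time $t\in\ii$ ($\ell$ being evaluated only at effective jumps), i.e.\ $\{\tau_j\le1\}=\{\IiR\1_{\Gamma_j}\,d\mul\ge1\}$. Since $\1_{\Gamma_j}$ is a nonnegative predictable integrand, the compensation formula (which for such integrands holds in $[0,\infty]$ with no further integrability requirement), together with $\ell<e^{-j}$ on $\Gamma_j$ and $\Gamma_j\subset\{\ell<e^{-1}\}$ for every $j\ge1$, gives
$$
E_P\IiR\1_{\Gamma_j}\,d\mul=E_P\IiR\1_{\Gamma_j}\,\ell\,d\Lb\ \le\ e^{-j}\,E_P\IiR\1_{\{\ell\le e^{-1}\}}\,d\Lb\ =:\ C\,e^{-j}.
$$

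Next I would check that $C<\infty$. On $\{\ell\le e^{-1}\}$ one has $|\ell-1|=1-\ell\ge1-e^{-1}$, and since $\theta^*$ is nonnegative, convex and vanishes at the origin, it is nondecreasing on $[0,\infty)$ with $\theta^*(1-e^{-1})>0$; hence $\1_{\{\ell\le e^{-1}\}}\le\theta^*(1-e^{-1})^{-1}\,\theta^*(|\ell-1|)$, and \eqref{eq-29} yields $C<\infty$. Markov's inequality then gives $P(\tau_j\le1)\le Ce^{-j}$, so $\sum_{j\ge1}P(\tau_j\le1)<\infty$, and Borel--Cantelli shows that $P$-almost surely $\tau_j\le1$ for at most finitely many $j$. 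As each $\tau_j$ takes its values in $\ii\cup\{\infty\}=[0,1]\cup\{\infty\}$, this means that $P$-almost surely there is a (random) index $j_0$ with $\tau_j=\infty$ for all $j\ge j_0$; consequently $\tau^-=\sup_{j\ge1}\tau_j\ge\tau_{j_0}=\infty$ $P$-a.s. The two points that need a little care are the identification of $\{\tau_j\le1\}$ with $\{\IiR\1_{\Gamma_j}\,d\mul\ge1\}$ and the use of the compensation formula for a merely nonnegative (possibly non-integrable) predictable integrand; I expect neither to be a real obstacle, and nothing beyond \eqref{eq-29} and Theorem \ref{res-01b} should be required.
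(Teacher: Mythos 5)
Your proof is correct and follows essentially the same route as the paper: reduce the bad event to the existence of a jump in $\Gamma_j$, use that $\ell\Lb$ is the $P$-compensator of $\mu^X$ (a consequence of Theorem \ref{res-01b}) to compensate the counting integral, extract a factor $e^{-j}$ from $\ell<e^{-j}$ on $\Gamma_j$, and control the remaining mass $E_P\Lb(\ell\le e^{-1})$ via the energy estimate \eqref{eq-29} and the growth of $\theta^*$ away from $0$. One small remark: the Borel--Cantelli step is a detour. Since $e^{-j}$ is decreasing in $j$, the stopping times $\tau_j$ are nondecreasing in $j$, so $\{\tau^-\le1\}=\bigcap_{j\ge1}\{\tau_j\le1\}\subset\{\tau_j\le1\}$ for every $j$; hence $P(\tau^-\le1)\le Ce^{-j}$ for all $j$, and letting $j\to\infty$ gives $P(\tau^-\le1)=0$ directly. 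This is exactly how the paper concludes, without invoking Borel--Cantelli. Your identification of $\{\tau_j\le1\}$ with the event $\{\IiR\1_{\Gamma_j}\,d\mul\ge1\}$, and the observation that the compensation formula extends to all nonnegative predictable integrands by monotone convergence, are both sound and correctly fill in details the paper only implies.
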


\begin{proof}
For all $j\ge1,$ we have $\tau^-\le1 \Rightarrow \sum_{t\le 1}\1_{\{\ell(t,\Delta X_t)\le e^{-j}\}}\ge1.$ Therefore,
\begin{multline*}
	P(\tau^-\le1)
\le P\left(\sum_{t\le 1}\1_{\{\ell(t,\Delta X_t)\le e^{-j}\}}\ge1\right)
\le E_P\sum_{t\le 1}\1_{\{\ell(t,\Delta X_t)\le e^{-j}\}}\\
\overset{\checkmark}{=} E_P\IiR  \1_{\{\ell\le e^{-j}\}}\,\ell d\Lb
\le e^{-j} E_P \Lb(\ell\le e^{-j})
\le e^{-j} E_P \Lb(\ell\le 1/2)
\end{multline*}
where we used \eqref{eq-44} at the marked equality.
The result will follow letting $j$ tend to infinity, provided that we show that $E_P \Lb(\ell\le 1/2)<\infty.$
\\
But, we know  with \eqref{eq-29} that $E_P\IiR \theta^*(|\ell-1|)\,d\Lb<\infty.$ Hence, $E_P\Lb(\ell\le 1/2)\le E_P\IiR \theta^*(|\ell-1|)\,d\Lb/\theta^*(1/2)<\infty$ and the proof is complete.
\end{proof}

\begin{lemma}\label{res-10}
Assume $P\sim R.$
	Let $R_j$ and $P_j$ be the laws of the stopped process $X^{\tau_j\wedge1}$ under $R$ and $P$ respectively. Then, under the condition \emph{(U)} we have for all $j\ge1$
$$
H(P_j|R_j)=H(P_0|R_0)+E_{P}\int_{(0,\tau_j\wedge1]\times\Rd_*}(\ell\log\ell-\ell-1)\,d\Lb.
$$
\end{lemma}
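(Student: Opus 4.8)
The plan is to reproduce, in the jump framework, the computation of $H(P^k|R^k)$ carried out at the end of the proof of Theorem~\ref{res-04}, with the continuous quadratic energy replaced by the entropic term $\theta^*(\ell-1)=\ell\log\ell-\ell+1$ (so that the ``$-1$'' in the displayed statement should read ``$+1$''). Since $P\sim R$ and $R$ satisfies (U), formula \eqref{eq-46} is available; and because $\log\frac{dP_j}{dR_j}$ is $\sigma(\XXX0{\tau_j\wedge1})$-measurable while $P_j$ is the law of $X^{\tau_j\wedge1}$ under $P$, taking logarithms in \eqref{eq-46} and integrating against $P$ gives
$$
H(P_j|R_j)=H(P_0|R_0)
+E_P\big[(\1_{(0,\tau_j\wedge1]}\log\ell)\odot \mut_1\big]
-E_P\int_{(0,\tau_j\wedge1]\times\Rd_*}\theta(\log\ell)\,d\Lb.
$$
On $\llbracket0,\tau_j\rrbracket$ one has $\ell\ge e^{-j}$, so $\theta(\log\ell)=\ell-\log\ell-1$ is bounded where $e^{-j}\le\ell\le1/2$, dominated by $\ell\log\ell$ where $\ell$ is large, and comparable to $(\ell-1)^2$ near $\ell=1$; hence this last integral is finite by \eqref{eq-29} (equivalently, by the integrabilities noted in the first item of the remarks following Corollary~\ref{res-08}) together with Lemma~\ref{res-09}.

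Next I would change the compensator. By Theorem~\ref{res-01b} one has $P\in\MP(B^R+\widehat B^\ell,\ell\Lb)$, so $\mul$ has $P$-compensator $\ell\Lb$ and $\mut=\mul-\Lb=\muh+(\ell-1)\Lb$, where $\muh=\mul-\ell\Lb$ generates a local $P$-martingale. Thus
$$
(\1_{(0,\tau_j\wedge1]}\log\ell)\odot \mut_1
=(\1_{(0,\tau_j\wedge1]}\log\ell)\odot \muh_1
+\int_{(0,\tau_j\wedge1]\times\Rd_*}(\ell-1)\log\ell\,d\Lb .
$$
The Fenchel equality for the pair $(\theta,\theta^*)$ at the point $\log\ell$ — at which $\ell-1=\theta'(\log\ell)$ — gives the pointwise identity $(\ell-1)\log\ell=\theta(\log\ell)+\theta^*(\ell-1)$; in particular the drift integral above is finite in $P$-expectation and $(\ell-1)\log\ell-\theta(\log\ell)=\theta^*(\ell-1)=\ell\log\ell-\ell+1$.

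It then remains only to show that $E_P\big[(\1_{(0,\tau_j\wedge1]}\log\ell)\odot \muh_1\big]=0$, after which substitution yields
$$
H(P_j|R_j)=H(P_0|R_0)+E_P\int_{(0,\tau_j\wedge1]\times\Rd_*}(\ell\log\ell-\ell+1)\,d\Lb,
$$
which is the assertion. The vanishing of that expectation is the main obstacle: the process $(\1_{(0,\tau_j\wedge1]}\log\ell)\odot \muh$ is a priori only a local $P$-martingale, and one must upgrade it to a genuine $P$-martingale, in exact analogy with the fact — used in the proof of Theorem~\ref{res-04} — that the local martingale $N^k=M^R-\widehat B$ behind \eqref{eq-17} is a true martingale. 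Concretely, one invokes the first assertion of Lemma~\ref{res-05b} with $\lambda=-\log\ell$ for fixed $j$, letting $k\to\infty$ along the stationary sequence $\tau^k_j\uparrow\tau_j$ and using the energy bound \eqref{eq-29}; equivalently, one extends \eqref{eq-44} from bounded predictable integrands to $\1_{(0,\tau_j\wedge1]}\log\ell$ by a uniform-integrability argument resting once more on \eqref{eq-29} and Lemma~\ref{res-09}. Apart from this point, the only care required concerns the interchanges of limit and expectation (the passage $k\to\infty$ and the finiteness of the $d\Lb$-integrals), all of which are controlled by \eqref{eq-29}.
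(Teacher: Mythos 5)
Your proposal is essentially correct and takes the same route as the paper's proof: take the logarithm of the density formula on $\llbracket 0,\tau_j^k\wedge1\rrbracket$ (or its $k\to\infty$ version \eqref{eq-46}), split $\mut=\muh+(\ell-1)\Lb$, kill the $\muh$-integral by the genuine martingale property coming from Lemma~\ref{res-05b} at the stopping level $\tau_j^k$, apply the Fenchel identity $(\ell-1)\log\ell-\theta(\log\ell)=\theta^*(\ell-1)$, and let $k\to\infty$. Your observation that the lemma's displayed constant should be ``$+1$'' rather than ``$-1$'' is correct; the right integrand is $\theta^*(\ell-1)=\ell\log\ell-\ell+1$, consistent with Theorem~\ref{res-04b}, and the paper's own proof additionally contains a second typo, writing $[(\ell-1)-\theta(\log\ell)]$ where $[(\ell-1)\log\ell-\theta(\log\ell)]$ is meant.

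One small slip: to obtain $E_{P^k_j}\big[(\1_{(0,\tau_j^k\wedge1]}\log\ell)\odot\muh\big]=0$ via Lemma~\ref{res-05b}, the relevant application takes $R$ as the base measure with $\Kb=\Lb$ and $\lambda=+\log\ell$ (so that $Q^k_j=P^k_j$ and the compensator relation $E_{Q^k_j}\sum f(t,\Delta X_t)=E_{Q^k_j}\int fe^\lambda\,d\Kb$ reads $E_{P^k_j}[f\odot\muh]=0$); taking $\lambda=-\log\ell$ from base $P$ instead identifies $R^k_j$'s compensator under $R^k_j$, which is not what is needed. Your alternative phrasing — extending \eqref{eq-44} to the bounded integrand $\1_{(0,\tau_j^k\wedge1]}\log\ell$ (bounded because $\tau_j^k$ confines $\log\ell$ to $[-j,k]$) — is the correct mechanism and is what the invocation of Lemma~\ref{res-05b} in the paper amounts to; note that $\tau_j$ alone only bounds $\log\ell$ from below, so one must indeed work at level $\tau_j^k$ for this step, as you do, before letting $k\to\infty$.
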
 

\begin{proof}
We denote $R_j^k$ and $P_j^k$ the laws of the stopped process $X^{\tau_j^k\wedge1}$ under $R$ and $P$ respectively.  With the expression of $\dPR$ on $\llbracket 0,\tau_j^k\wedge1\rrbracket$ we see that
\begin{eqnarray*}
	&&H(P^k_j|R^k_j)\\
	&=& H(P_0|R_0)+E_{P^k_j}\left((\1_{( 0,\tau_j^k\wedge1]}\log\ell)\odot \mut-\int_{(0,\tau^k_j\wedge1]\times\Rd_*}\theta(\log\ell)\,d\Lb\right)\\
	&=&  H(P_0|R_0)+E_{P^k_j}\left((\1_{( 0,\tau_j^k\wedge1]}\log\ell)\odot \muh+\int_{(0,\tau^k_j\wedge1]\times\Rd_*}[(\ell-1)-\theta(\log\ell)]\,d\Lb\right)\\
	&=& H(P_0|R_0)+E_{P_j}\int_{(0,\tau^k_j\wedge1]\times\Rd_*}(\ell\log\ell-\ell-1)\,d\Lb
\end{eqnarray*}
where we invoke Lemma \ref{res-05b} at the last equality. We complete the proof letting $k$ tend to infinity.
\end{proof}

\begin{proof}[Conclusion of the proof of Theorem \ref{res-04b}]

When $P\sim R,$ by Lemma \ref{res-09}, $P$-almost surely there exists $j_o$ large enough such that for all $j\ge j_o,$ $\tau_j=\infty$ and \eqref{eq-46} tells us that
\begin{equation*}
\dPR
=\frac{dP_0}{dR_0}(X_0)
\exp\left(
(\log\ell)\odot \mut-\IiR\theta(\log\ell)\,d\Lb\right)
\end{equation*}
and also that the product appearing in $Z^-$ contains $P$-almost surely a finite number of terms which are all positive. Note that we do not use any limit result for stochastic or standard integrals; it is an immediate $\omega$-by-$\omega$ result with a stationary sequence. This is the desired expression for $\dPR$ when $P\sim R.$

Let us extend this result to the case when $P$ might not be equivalent to $R.$ We proceed exactly as in Theorem \ref{res-04}'s proof and start from \eqref{eq-48}: $\Lim n H(P|P_n)=0$ where $P_n:=(1-1/n)P+R/n,$ $n\ge1.$ Let us write $\lambda=\log\ell$ and $\lambda^n=\log\ell^n$ which are well-defined $P\as$ Thanks to Theorem \ref{res-01b}, we see that 
\begin{eqnarray*}
H(P|P_n)
&\ge& E_P \left((\lambda-\lambda^n)\odot \widetilde{\mu}^{\ell ^nL}
	-\IiR \theta(\lambda-\lambda^n)\,\ell^n d\Lb\right)\\
&=& E_P \left((\lambda^n-\lambda)\odot \widetilde{\mu}^{\ell L}
	+\IiR [\ell/\ell^n\log(\ell/\ell^n)-\ell/\ell^n+1]\,\ell^nd\Lb\right)\\
&=& E_P \IiR [\ell^n/\ell -\log(\ell^n/\ell)-1]\,d\ell \Lb\\
&=& E_P\IiR \theta(\lambda^n-\lambda)\, d\ell\Lb
\end{eqnarray*}
which leads to the entropic estimate analogous to \eqref{eq-51}:
\begin{equation}\label{eq-52}
\Lim n E_P\IiR \theta(\lambda^n-\lambda)\, d\ell\Lb=0.
\end{equation}
Taking the difference between $\log(dP_n/dR)=\lambda^n\odot \mut-\IiR\theta(\lambda^n)\,d\Lb$ and the logarithm of the announced formula \eqref{eq-42} for $dP/dR$ on the set $\{\dPR>0\},$ we obtain
$$
(\lambda^n-\lambda)\odot \widetilde{\mu}^{\ell L}
-\IiR \theta(\lambda^n-\lambda)\, d\ell\Lb,\quad P\as
$$
and the desired convergence follows from \eqref{eq-52}. Note that $\theta(a)=a^2/2+o_{a\rightarrow 0}(a^2).$ This completes the proof of \eqref{eq-42}.
  
As in the proof of Theorem \ref{res-04}, we obtain the announced formula for $H(P|R)$ under the condition (U) with  Lemmas \ref{res-09} and \ref{res-10}, and the corresponding general inequality follows from choosing 
\begin{equation*}
  \tilde{u}(X)
:=\log\frac{dP_0}{dR_0}(X_0)+(\1_{(0,\tau^k_j\wedge1]}\log\ell)\odot \mut
    -\int_{(0,\tau^k_j\wedge1]\times\Rd_*}\theta^*(\log\ell)\,d\Lb 
\end{equation*}
in the variational representation formula \eqref{eq-11}, and then letting $k$ and $j$ tend to infinity.
\end{proof}

\appendix

\section{An exponential martingale with jumps}

Next proposition is about exponential martingale with jumps. We didn't use it during the proofs of this paper. But we give it here for having a more complete picture of the Girsanov theory.

In this result, integrands $h$ are considered which may attain the value $-\infty.$ This is because with $h= \log\ell$, $h=-\infty$ corresponds to $\ell=0.$

\begin{proposition}[Exponential martingale]\label{res-03b}
Let $h:\Omega\times\ii\times\Rd_*\to[-\infty,\infty)$ be an extended real valued
predictable process which may take the value $-\infty$ and satisfies 
\begin{eqnarray}
    &&E_R \int_{\ii\times\RR_*}\1_{\{h_t(q)\ge-1\}}\theta[h_t(q)]\,\Lb(dtdq)<\infty, \label{eq-28b}\\
&&E_R \int_{\ii\times\RR_*}\1_{\{h_t(q)<-1\}}\,\Lb(dtdq)<\infty.\label{eq-28c}
\end{eqnarray}
Let us introduce the stopping time 
\begin{equation*}
\tau^h:=\inf_{}\{t\in\ii;h(\Delta X_t)=-\infty\}\in\ii\cup \{\infty\}
\end{equation*}
and the convention $e^{-\infty}=0.$
\\
Then, $e^{h}-1$ is in $\mathcal{H}_{1,2}(R,\Lb)$ and
\begin{equation}\label{eq-37}
Z^h_t:=\1_{\{t< \tau^h\}}\widetilde\exp\Big(h\odot\mut_t-\int_{(0,t]\times\Rd_*}\theta[h_s(q)]\,
    \Lb(dsdq)\Big),\quad t\in\ii
\end{equation}
is  a local $R$-martingale and a nonnegative $R$-supermartingale which satisfies
\begin{equation}\label{eq-41}
dZ^h_t=\1_{\{t\le \tau^h\}} Z^h_{t^-}\,[(e^{h(q)}-1)\odot d\mut _t].
\end{equation}
\end{proposition}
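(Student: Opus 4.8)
The plan is to follow the proof of Lemma~\ref{res-03c} closely, the only genuinely new ingredient being that $h$ may equal $-\infty$, which is exactly what forces the $\widetilde\exp$ bookkeeping. I would take $\alpha=1$ in the $\widetilde\exp$ decomposition (recall that the product $Z^+Z^-$ does not depend on $\alpha$), so that $h=h^++h^-$ with $h^+:=\1_{\{h\ge-1\}}h$ (real valued) and $h^-:=\1_{\{h<-1\}}h$ (possibly $-\infty$, but supported on $\{h<-1\}$, which has finite $R$-expected L\'evy mass by \eqref{eq-28c}), and $Z^h=Z^+Z^-$ in the notation of the display preceding Lemma~\ref{res-05b}. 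First I would settle the integrability of $e^h-1$: on $\{h\ge-1\}$ the function $\theta$ is quadratic near $0$ and grows exponentially near $+\infty$, so \eqref{eq-28b} gives $\1_{\{h\ge-1\}}(e^h-1)\in\mathcal{H}_{1,2}(R,\Lb)$ exactly as in Lemma~\ref{res-03c}, while on $\{h<-1\}$ one simply uses $|e^h-1|\le1$ together with \eqref{eq-28c}. In particular $(e^h-1)\odot\mut$, $(e^{h^+}-1)\odot\mut$ and $(e^{h^-}-1)\odot\mut$ are well-defined local $R$-martingales.

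Next I would treat $Z^+$ and $Z^-$ separately. The process $h^+$ is real valued and satisfies the hypothesis \eqref{eq-28} of Lemma~\ref{res-03c}, since $\theta(h^+)=\1_{\{h\ge-1\}}\theta(h)$ is $R\otimes\Lb$-integrable by \eqref{eq-28b}; hence Lemma~\ref{res-03c} gives directly that $Z^+=\mathcal{E}\big((e^{h^+}-1)\odot\mut\big)$ is a positive local $R$-martingale with $dZ^+_t=Z^+_{t^-}[(e^{h^+(q)}-1)\odot d\mut_t]$. For $Z^-$: since $E_R\Lb(\{h<-1\})<\infty$, on any interval $[0,t]$ only finitely many jumps carry $h<-1$, $R\as$, so $Z^-$ is a genuine finite-activity jump stochastic exponential, equal to $\mathcal{E}\big((e^{h^-}-1)\odot\mut\big)$ with $(e^{h^-}-1)\in[-1,e^{-1}-1]$ bounded; at the first jump carrying $h=-\infty$, i.e.\ at $\tau^h$, the factor $e^{h^-}=0$ absorbs $Z^-$, hence $Z^h$, at $0$, which is precisely the role of the prefactor $\1_{\{t<\tau^h\}}$.

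Then I would glue the two pieces. As $h^+$ and $h^-$ have disjoint supports, the purely discontinuous local martingales $(e^{h^+}-1)\odot\mut$ and $(e^{h^-}-1)\odot\mut$ have no common jumps, so their quadratic covariation vanishes and the product formula for stochastic exponentials gives
$$
Z^h=Z^+Z^-=\mathcal{E}\big((e^{h^+}-1)\odot\mut\big)\,\mathcal{E}\big((e^{h^-}-1)\odot\mut\big)=\mathcal{E}\big((e^{h}-1)\odot\mut\big),
$$
using $(e^{h^+}-1)+(e^{h^-}-1)=e^h-1$; in particular the right-hand side is independent of $\alpha$. This expression immediately yields \eqref{eq-41} — the indicator $\1_{\{t\le\tau^h\}}$ rather than $\1_{\{t<\tau^h\}}$ being correct because the jump at $\tau^h$, which sends $Z^h$ to $0$, must be retained — and shows that $Z^h$ is a local $R$-martingale; being nonnegative (its jumps satisfy $e^{h(q)}-1\ge-1$), it is an $R$-supermartingale by Fatou's lemma along a localizing sequence, exactly as at the end of Lemma~\ref{res-03c}'s proof. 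A more elementary route, avoiding the product formula, is to put $h^{(j)}:=h\vee(-j)$, check that it is real valued and satisfies \eqref{eq-28} ($\theta$ being decreasing on $(-\infty,0]$, one has $\theta(h^{(j)})\le\theta(h)$ off $\{h<-1\}$ while $\theta(h^{(j)})\le\theta(-j)\le j$ on $\{h<-1\}$), apply Lemma~\ref{res-03c} to each $h^{(j)}$, and let $j\to\infty$: the compensator $\int_0^t(e^{h^{(j)}}-1)\,d\Lb$ converges to $\int_0^t(e^{h}-1)\,d\Lb$ by dominated convergence, while $\sum_{s\le t}h^{(j)}(s,\Delta X_s)$ decreases to $\sum_{s\le t}h(s,\Delta X_s)\in[-\infty,\infty)$, which equals $-\infty$ exactly on $\{t\ge\tau^h\}$, so that $Z^{h^{(j)}}_t\to Z^h_t$ $R\as$ and the local-martingale and supermartingale properties pass to the limit.

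I expect the only real obstacle to be the $Z^-$ part: controlling the jumps of $h^-$, which may take the value $-\infty$, verifying that $R\as$ only finitely many of them precede $\tau^h$ so that $Z^-$ really is a bona fide jump exponential, and pinning down the absorption at $\tau^h$ — hence the boundary case $\1_{\{t\le\tau^h\}}$ in \eqref{eq-41} — precisely. Everything else is a faithful rerun of Lemma~\ref{res-03c}.
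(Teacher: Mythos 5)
Your main line of argument is exactly the paper's: cut $h$ at $-1$ into $h^+$ and $h^-$, recognize $Z^+$ as the case already handled by Lemma~\ref{res-03c}, treat $Z^-$ as a finite-activity jump exponential absorbed at $\tau^h$ (with $e^{h^-}-1$ bounded), and glue via the no-common-jumps identity $d[Z^+,Z^-]=0$ — your invocation of the Yor product formula $\mathcal{E}(M)\mathcal{E}(N)=\mathcal{E}(M+N)$ is the same computation as the paper's $dZ_t = Z^+_{t^-}dZ^-_t + Z^-_{t^-}dZ^+_t$, and the conclusion via Fatou is identical. The alternative truncation route $h^{(j)}:=h\vee(-j)$ you sketch at the end is a sound extra option the paper does not pursue (with the domination $\theta(h^{(j)})\le\theta(-j)\le j$ on $\{h<-1\}$ paired with \eqref{eq-28c}, and $\theta(h^{(j)})=\theta(h)$ on $\{h\ge-1\}$, the hypotheses of Lemma~\ref{res-03c} do hold for each $j$, and passing supermartingale and local-martingale properties to the $R\as$ limit along a localizing sequence is routine).
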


The standard notation is $Z^h:=\mathcal{E}([e^h-1]\odot\mut),$ the stochastic exponential of $[e^h-1]\odot\mut.$ Some details are necessary to make precise the sense of the inner stochastic integral $h\odot\mut_t$ in the expression of $Z^h_t.$ We denote
\begin{eqnarray*}
h^+&:=&\1_{\{h\ge-1\}}h\in \RR\\
h^-&:=&\1_{\{h<-1\}}h\in [-\infty,0].
\end{eqnarray*}
Under the assumption \eqref{eq-28b}, $h^+\odot\mut$ is well defined as a stochastic integral. On the other hand, \eqref{eq-28c} implies that $h^-(t,\Delta X_t)$ has $R\as$ finitely many jumps. It follows that $\sum_{0\le s\le t}h^-(s,\Delta X_s)$ is meaningful for all $t< \tau^h$. But the integral $\int_{(0,t]\times\Rd_*}h^-_s(q)\,\Lb(dsdq)$ might not be defined under \eqref{eq-28c} and   $h^-\odot\mut_t=\sum_{0\le s\le t}h^-(s,\Delta X_s)-\int_{(0,t]\times\Rd_*}h^-_s(q)\,\Lb(dsdq)$ is meaningless in this case. Nevertheless, the full expression in the exponential $\zeta(h):= h\odot\mut-\int \theta(h)\,d\Lb$ is defined as follows. We put 
$	%\begin{equation*}
\zeta(h^-)
:=\sum_{0\le s\le t}h^-(s,\Delta X_s)-\int_{(0,t]\times\Rd_*}[e^{h^-_s(q)}-1]\,
    \Lb(dsdq)
$	%\end{equation*}
which is well defined under \eqref{eq-28c} and is obtained by cancelling the terms $\int_{(0,t]\times\Rd_*}h^-_s(q)\,\Lb(dsdq)$. As $\theta(0)=0,$   we have $\zeta(h)=\zeta(h^++h^-)=\zeta(h^+)+\zeta(h^-)$ and for all $t\in\ii,$
\begin{equation}\label{eq-38}
\left\{
\begin{array}{rcl}
Z^h_t&=&Z^{h^+}_tZ^{h^-}_t \quad \textrm{with}\\
Z^{h^+}_t&:=&\exp\Big(h^+\odot\mut_t-\int_{(0,t]\times\Rd_*}\theta[h^+_s(q)]\,
    \Lb(dsdq)\Big),\\
Z^{h^-}_t&:=&\1_{\{t< \tau^h\}} \exp\left(\sum_{0\le s\le t}h^-(s,\Delta X_s)-\int_{(0,t]\times\Rd_*}[e^{h^-_s(q)}-1]\,
    \Lb(dsdq)\right).\\
\end{array}\right.
\end{equation}
This is what is meant by the concise expression \eqref{eq-37}.

\proof 
Now, we consider the general case where $h$ may attain the value $-\infty$ and \eqref{eq-28} is weakened by \eqref{eq-28b} and \eqref{eq-28c}. We use the decomposition \eqref{eq-38} and write $Z^+=Z^{h^+}$ and $Z^-=Z^{h^-}$ for short. Clearly, $Z^+$ and $Z^-$ do not jump at the same times and $d[Z^+,Z^-]=\Delta Z^+ \Delta Z^-=0.$ Hence,
\begin{equation}\label{eq-39}
dZ_t=\lg{Z^+}tdZ^-_t+\lg{Z^-}tdZ^+_t.
\end{equation}
The $h^+$-part enters the framework of Lemma \ref{res-03c} and we have 
\begin{equation}\label{eq-39a}
dZ^+_t=\lg{Z^+}t \Big([e^{h^+}-1]\odot\mut\Big).
\end{equation}
Let us look at the $h^-$-part. We need to compute $dZ^-_t.$ For all $t< \tau^h,$ put $$Y_t^-=\sum_{0\le s\le t}h^-(s,\Delta X_s)-\int_{(0,t]\times\Rd_*}[e^{h^-_s(q)}-1]\,
    \Lb(dsdq).$$
Then, with the convention that $h^-(t,0)=0,$ $dY^-_t=h^-(t,\Delta X_t)-\gamma_t\,\rho(dt)$ with $\gamma_t=\int_{\Rd_*}[e^{h^-_t(q)}-1]\,\LL _t(dq),$ $\Delta Y^-_t=h^-(t,\Delta X_t)$ and with It\^o's formula, we arrive at
\begin{multline*}
de^{Y^-_t}=e^{\lg {Y^-}t}\Big([e^{\Delta Y^-_t}-1]+dY^-_t-\Delta Y^-_t\Big)
	=e^{\lg {Y^-}t}\Big([e^{h^-(t,\Delta X_t)}-1]-\gamma_t\,\rho(dt)\Big)\\
	=e^{\lg {Y^-}t}\Big([e^{h^-}-1]\odot d\mut_t\Big).
\end{multline*}
It follows that 
\begin{equation}\label{eq-40}
dZ^-_t=\lg{Z^-}t\Big([e^{h^-}-1]\odot d\mut_t\Big),\quad t<\tau^h.
\end{equation}
At $t=\tau^h,$ by the definition \eqref{eq-38} of $Z^-,$ we have 
$$
dZ^-_{|t=\tau^h}=-\lg{Z^-}{(\tau^h)}=\lg{Z^-}{(\tau^h)}\times [e^{-\infty}-1]
$$
which is \eqref{eq-40} at $t=\tau^h$ with the convention $e^{-\infty}=0.$  This provides us with 
$$
dZ^-_t=\1_{\{t\le \tau^h\}} \lg{Z^-}t \Big([e^{h^-}-1]\odot\mut\Big).
$$
Together with \eqref{eq-39} and \eqref{eq-39a}, this proves \eqref{eq-41} which implies that $Z^h$ is a local $R$-martingale.
\\
By Fatou's lemma, any nonnegative local martingale is also a supermartingale.
\endproof

%\bibliographystyle{alpha}
%\bibliography{bib-christian}

\begin{thebibliography}{Pro04}

\bibitem[Jac75]{Jac75}
J.~Jacod.
\newblock Multivariate point processes: predictable representation,
  {R}adon-{N}ikod\'ym derivatives, representation of martingales.
\newblock {\em Z. Wahrsch. verw. Geb.}, 31:235--253, 1975.

\bibitem[Jac79]{Jac79}
J.~Jacod.
\newblock {\em Calcul stochastique et probl\`emes de martingales}, volume 714
  of {\em Lecture Notes in Mathematics}.
\newblock Springer, 1979.

\bibitem[JS87]{JaShi87}
J.~Jacod and A.N. Shiryaev.
\newblock {\em Limit theorems for stochastic processes}, volume 288 of {\em
  Grundlehren der mathematischen Wissenshaften}.
\newblock Springer, 1987.

\bibitem[Pro04]{Pro04}
P.~E. Protter.
\newblock {\em Stochastic integration and differential equations}, volume~21 of
  {\em Applications of mathematics. Stochastic modelling and applied
  probability}.
\newblock Springer, 2nd edition, 2004.

\bibitem[RR91]{RaoRen}
M.M. Rao and Z.D. Ren.
\newblock {\em Theory of {O}rlicz spaces}, volume 146 of {\em Pure and Applied
  Mathematics}.
\newblock Marcel Dekker, Inc., 1991.

\bibitem[RY99]{RY99}
D.~Revuz and M.~Yor.
\newblock {\em Continuous martingales and Brownian motion}, volume 293 of {\em
  Grundlehren der Mathematischen Wissenschaften}.
\newblock Springer, 3rd edition, 1999.

\end{thebibliography}

\end{document}